\documentclass[10pt]{amsart}
\usepackage[utf8]{inputenc}
\usepackage{amsmath, latexsym, amsfonts, amssymb, amsthm,enumitem}
\usepackage{graphicx, color,dsfont,epsfig,caption,wrapfig,subfig}
\usepackage{stmaryrd}
\usepackage{tikz}
\usepackage{bbm}
\usepackage{mathtools}

\usepackage{datetime}

\usepackage{hyperref}

 \hypersetup{
     linktoc=page,
     linkcolor=red,          
     citecolor=blue,        
     filecolor=blue,      
     urlcolor=cyan,
     colorlinks=true           
 }

\newcommand{\1}{{\ensuremath{\mathbbm{1}}}}

\newcommand{\fb}{{\ensuremath{\frac{\gb^2}{2}}}}

\def\y{\mathbf{y}}
\def\x{\mathbf{x}}

\newcommand{\la}{\lambda}

\numberwithin{equation}{section}
\newtheorem{theorem}{Theorem}[section]
\newtheorem{lemma}[theorem]{Lemma}
\newtheorem{proposition}[theorem]{Proposition}
\newtheorem{definition}[theorem]{Definition}
\newtheorem{assumption}[theorem]{Assumption}
\theoremstyle{remark}
\newtheorem{remark}[theorem]{Remark}
\theoremstyle{plain}

\DeclareMathSymbol{\geqslant}{\mathalpha}{AMSa}{"3E} \renewcommand{\geq}{\;\geqslant\;}                   \newcommand{\dd}{\text{\rm d}}             \newcommand{\R}{\mathbb{R}}
\newcommand{\N}{\mathbb{N}}
\newcommand{\Q}{\mathbb{Q}}
\newcommand{\E}{\mathds{E}}
\renewcommand{\P}{\mathds{P}}
\newcommand{\bx}{\mathbf{x}}

\usepackage{xparse}

\newcommand{\gb}{\beta}

\begin{document}

\title{Probabilistic ultraviolet renormalization  of the two-dimensional Sine-Gordon model}
 
 \author{Benjamin Bonnefont}
\author{Hermanni Rajamäki}
\author{Vincent Vargas}
\thanks{All authors: Université de Genève, Section de mathématiques et département de physique théorique, 1211 Geneva, Switzerland.}
\thanks{E-mails: \href{mailto:benjamin.bonnefont@unige.ch}{\texttt{benjamin.bonnefont@unige.ch}}, \href{mailto:hermanni.rajamaki@unige.ch}{\texttt{hermanni.rajamaki@unige.ch}}, \href{mailto:vincent.vargas@unige.ch}{\texttt{vincent.vargas@unige.ch}}.}

\subjclass[2020]{81T08, 81T16, 82B21}

\keywords{Sine-Gordon, Euclidean quantum field theory, renormalization}

\begin{abstract}
We give a probabilistic approach to the ultraviolet renormalization of the two-dimensional Sine-Gordon model. The model is formally described by the density \(\exp(\alpha\int \mathopen{:} \cos(\beta X) \mathclose{:} )\), where \(X\) is a log-correlated Gaussian field. Our construction applies on bounded convex domains for every particle activity \(\alpha\in\mathbb R\) and throughout the subcritical range \(\beta^2\in[0,4)\), below the Berezinskii--\allowbreak Kosterlitz--\allowbreak Thouless threshold. We apply the martingale method introduced by Lacoin, Rhodes and Vargas to two dimensions by using refined estimates based on charge conjugation symmetries. Our result extends existing constructions by treating the full subcritical regime for a broad class of covariances.
\end{abstract}

\maketitle
\section{Introduction}
The two-dimensional Sine-Gordon model is a Euclidean field
theory formally defined for particle activity $\alpha\in \R$ and coupling parameter $\beta$ by the measure
\[
\exp\left(
\alpha\int_\Lambda\mathopen{:}\cos(\gb X(x))\mathclose{:}\mu(\dd x)
\right)\dd\P.
\]
$X$ is a centred Gaussian field with law $\P$ on an open bounded convex set $\Lambda\subset\R^2$ with covariance
$K(x,y)=-\ln|x-y|+O(1)$ and $\mu(\dd x)=g(x)\dd x$, where $g$ is some nonnegative and bounded function. The Wick ordering $\mathopen{:}\cdot\mathclose{:}$ is formally given by
\[
 \mathopen{:}\cos(\gb X(x))\mathclose{:}
 =e^{\fb K(x,x)}\cos(\gb X(x)).
\]
Here the diagonal covariance $K(x,x)$ is understood with an
implicit regularization, which will be specified below. The ultraviolet problem concerns removing this regularization. In contrast, the infrared problem relates to extending the domain $\Lambda$ to the whole plane. In this paper, we address the ultraviolet problem and keep $\Lambda$ fixed.

In the context of constructive quantum field theory, the Sine-Gordon model is of particular interest because of its renormalization structure: as $\beta^2$ increases, it undergoes a countable sequence of phase transitions. To describe this structure in more detail, we consider the (non-renormalized) partition function $\bar Z_{\alpha,\beta}(1)$, defined as the Gaussian expectation of the above density. Expanding the exponential, writing each cosine as a sum
of two complex exponentials and computing the expectation gives formally
\begin{equation*}
\begin{aligned}
 \bar Z_{\alpha,\beta}(1)
 &=\E\left[\exp\left(\alpha\int_\Lambda\mathopen{:}\cos(\gb X(x))\mathclose{:}\mu(\dd x)\right)\right]\\
 &=\sum_{n=0}^{\infty}\frac{\alpha^n}{2^n n!}
 \sum_{(\lambda_1,\ldots,\lambda_n)\in\{-1,1\}^n}
 \int_{\Lambda^n}
 \exp\left(
 -\gb^2\sum_{1\le k<l\le n}
 \lambda_k\lambda_l K(x_k,x_l)
 \right)
 \prod_{k=1}^n\mu(\dd x_k).
\end{aligned}
\end{equation*}
For $\alpha>0$, this is the partition function of a gas of
particles with charges $\lambda_k=\pm1$. The function is well-defined in the case $\gb^2\in[0,2)$ as the Wick ordering removes the interaction of each
charge with itself. The construction of the measure in this regime was established in the early results of the Sine-Gordon theory~\cite{Frohlich1975}. In the range $\gb^2\in[2,4)$ the partition function diverges as colliding opposite charges make the integrals infinite.

The first phase transition occurs at $\gb^2=2$: the attraction between two opposite charges forming a dipole becomes strong enough to produce a divergence in the integrals. The next transition, at $\gb^2=3$, occurs when the attraction between two dipoles, a quadrupole, becomes strong enough to produce additional divergences. Further multipoles create similar new divergences at each phase transition, the thresholds given by the formula
\begin{equation*}
	\beta_n^2=4-\frac{2}{n},\quad n\ge 1.
\end{equation*}
These successive multipole thresholds accumulate at the
Berezinskii--\allowbreak Kosterlitz--\allowbreak Thouless threshold~\cite{KosterlitzThouless1973}, which in our normalization equals $\beta^2=4$. Above this threshold, the theory is non-renormalizable. Below this threshold, however, only finitely
many types of multipoles give rise to new divergences and the theory is super-renormalizable.

Following~\cite{LacoinRhodesVargas2023}, a natural way to tame these divergences is to use the cumulants of $\int_\Lambda\mathopen{:}\cos(\gb X(x))\mathclose{:}\mu(\dd x)$ as counterterms, since these are field independent and their divergences follow the same pattern as in the multipole picture. Letting the $i$th cumulant be
$\mathcal C_i$, the renormalized partition function is then written
formally as
\begin{equation*}
Z_{\alpha,\beta}(1)
 =\E\left[\exp\left(\alpha\int_\Lambda\mathopen{:}\cos(\gb X(x))\mathclose{:}\mu(\dd x)\right)\right]
 \exp\left(-\sum_{i=1}^{2n-1}\frac{\alpha^i}{i!}\mathcal C_i\right),
\end{equation*}
where $n\ge1$ is the least integer such that $\gb^2<\gb_n^2$. The Sine-Gordon measure, after normalizing, can be written as
\[
\frac{1}{Z_{\alpha,\beta}(1)}\exp\left(
\alpha\int_\Lambda\mathopen{:}\cos(\gb X(x))\mathclose{:}\mu(\dd x)
\right)\exp\left(-\sum_{i=1}^{2n-1}\frac{\alpha^i}{i!}\mathcal C_i\right)\dd\P.
\]

We give a martingale construction for this probability measure throughout the full
subcritical range $\gb^2\in[0,4)$, for every particle
activity $\alpha\in\R$.
The construction applies on bounded convex domains
to a broad class of log-correlated Gaussian fields
satisfying the white noise decomposition stated below.
To our knowledge, previous constructions of the Sine-Gordon model beyond the first threshold
$\gb^2=2$ have been confined to specific massive or
massless Gaussian free field covariances, including the ones reviewed below.
Our method is not restricted to Gaussian free fields, nor do we require the translation invariance exploited
by the renormalization group constructions discussed below. For $\gb^2\in[0,2)$, constructions for general
log-correlated Gaussian fields follow from
imaginary multiplicative chaos~\cite{JunnilaSaksmanWebb2020}. All parameter ranges below are expressed in the normalization $K(x,y)=-\ln|x-y|+O(1)$.

Starting from renormalization group methods,
\cite{BenfattoGallavottiNicolo1982,Nicolo1983,NicoloRennSteinmann1986} provided bounds on the renormalized partition function, ultimately throughout the full subcritical regime. The approach was further developed in~\cite{BrydgesKennedy1987,DimockHurd1993,DimockHurd2000,Pelaic2025}\footnote{The article~\cite{DimockHurd1993} contains an error that the original authors correct in their follow-up~\cite{DimockHurd2000}. For a discussion of gaps in the arguments of
	~\cite{DimockHurd1993,DimockHurd2000}, see
	~\cite[Section~1.1]{Pelaic2025}.}. In particular, \cite{Pelaic2025} constructs the massive Sine-Gordon
measure on the torus throughout the subcritical range. At the critical value $\beta^2=4$, perturbative
	renormalizability was established in
~\cite{NicoloPerfetti1989}.

Probabilistic approaches have produced constructions in other settings, including ones without translational invariance, with the drawback of lacking the full subcritical range. For $\gb^2\in[0,2)$, stochastic quantization has been used to construct the massive
measure, both in finite and infinite volume~\cite{GubinelliHairerOhZine2025}, while~\cite{TangXu2025} used quadratic backward
stochastic differential equations 
to construct the measure in the case of  the Dirichlet Gaussian free field.
Beyond this regime, a continuous renormalization group formulation
of~\cite{BrydgesKennedy1987}
was used in~\cite{BauerschmidtWebb2024} to construct the massive
and massless models in finite volume for $\beta^2\in[0,3)$. This expansion was developed further in~\cite{Vihko2026},
which expands these results to higher dimensions. A construction of the massive model on the torus for
$\beta^2\in[0,3)$, together with a coupling to the Gaussian free
field, was obtained in~\cite{BauerschmidtHofstetter2022}. The forward-backward stochastic differential equation approach of~\cite{GubinelliMeyer2024} yields constructions of the massive
	model for $\beta^2\in[0,3)$ in both finite and infinite volume, and establishes further properties for the measure.
	In finite volume,~\cite{Meyer2026} extends the construction to
	$\beta^2\in[0,24/7)$ and also develops perturbative renormalization
	throughout the full subcritical regime.
	
Local well-posedness of the dynamical Sine-Gordon equation
	on the torus was established in~\cite{HairerShen2016}
	and extended to the full subcritical regime in
~\cite{ChandraHairerShen2018}.
	Global well-posedness beyond $\beta^2=2$ was obtained in
~\cite{ChandraFeltesWeber2026}, and throughout the range
	$\beta^2\in[0,3)$ in~\cite{BringmannCao2026}.
	Furthermore, \cite{ChandraFeltesWeber2026} proved the existence
	of an invariant measure for the massive dynamics in a regime
	extending beyond $\beta^2=2$.

Our starting point is the martingale method introduced in
~\cite{LacoinRhodesVargas2023}. In that work, Lacoin, Rhodes and Vargas construct the boundary (dimension one) Sine-Gordon theory for covariances satisfying the one-dimensional analogue of the white noise decomposition below. We extend this construction to two dimensions by exploiting charge conjugation arguments, which are closely related to those used in~\cite{ChandraHairerShen2018} and are also used in the analysis of the effective potential in ~\cite{Meyer2026}. In a nutshell, we symmetrise the integrands, arising from our martingale representations, by exchanging the coordinates of opposite charges. For instance, in the case of a dipole with charges at $x,y\in\Lambda$, this symmetrisation produces, at a fixed scale, a quadratic vanishing factor in the accompanying terms, improving local singularities of the form $|x-y|^{-\gb^2}$ to $|x-y|^{2-\gb^2}$, the latter being integrable throughout the full subcritical regime $\gb^2\in[0,4)$. A concrete example of these cancellations is given below, starting from Eq.~\eqref{prototype-contribution}.

\section{Setup and main results}

To regularize the formal model described above, we introduce the cutoff $t$, and consider the centred Gaussian field $(X_t(x))_{x\in \Lambda,t\ge0}$, where $\Lambda\subset\R^2$ is a bounded open convex set, $\mu(\dd x)=g(x)\dd x$ with $g\ge0$ bounded and $\mu(\Lambda)>0$. The covariance function is
\begin{equation*}
	\E[X_t(x)X_s(y)]=\int_0^{t\wedge s}Q_u(x,y)\dd u\eqqcolon K_{t\wedge s}(x,y),
\end{equation*}
where the kernel $Q_u(x,y)$ is symmetric, positive semidefinite for every $u\ge0$, and satisfies the following:

\begin{assumption}[Smooth white noise decomposition]\ \label{ass:white-noise}
\begin{enumerate}
 \item For every $u\ge0$, $Q_u\in C^2(\Lambda\times\Lambda)$.
 There exists a constant $C<\infty$ such that, for every
 $u\ge0$ and $x,y\in\Lambda$,
 \begin{equation}\begin{split}\label{laderive}
    |Q_u(x,y)| &\le C  (1+e^u |x-y|)^{-3},\\
    |\nabla_x Q_u(x,y)| &\le C e^{u}(1+ e^u|x-y|)^{-3},\\
       \|\nabla_x \nabla_y Q_u(x,y)\|_{\mathrm{op}} &\le C e^{2u}(1+e^u|x-y|)^{-3}.
\end{split}
 \end{equation}

  \item
\begin{equation}\label{log-covariance-bound}
\sup_{t\geq 0} \sup_{x,y\in \Lambda}|K_t(x,y)+\ln (|x-y|\vee e^{-t} )|<C.
\end{equation}
\end{enumerate}
\end{assumption}

\begin{remark}
	The kernels
	\[
	Q_u(x,y)=\exp\left(-\frac{e^{2u}|x-y|^2}{2}\right)
	\quad\text{and}\quad
	Q_u(x,y)=\exp\left(-\frac{m^2e^{-2u}}{2}
	-\frac{e^{2u}|x-y|^2}{2}\right),
	\]
	with $m>0$, are prototypical examples satisfying Assumption \ref{ass:white-noise} on any bounded open set $\Lambda$. They define translation-invariant log-correlated fields.
	
	The Gaussian heat kernel is also the basis for the construction of two-dimensional Gaussian free fields, normalized here so that their covariance has logarithmic singularity $-\log|x-y|$. More generally, suitable heat-kernel decompositions satisfy Assumption \ref{ass:white-noise} for the massive Gaussian free field on $\mathbb R^2$ restricted to $\Lambda$, as well as for the Dirichlet Gaussian free field, with or without mass, on a bounded smooth domain $D\subset\mathbb R^2$, restricted to $\Lambda$ with $\overline{\Lambda}\subset D$. The Dirichlet examples are not translation invariant. The Dirichlet Gaussian free field of $\Lambda$ itself is not covered by this assumption, since \eqref{log-covariance-bound} fails near $\partial\Lambda$.
\end{remark}

We use the jointly continuous version of this Gaussian field. Let $(\mathcal{F}_t)_{t\ge0}$ be the filtration generated by the field, and set $\mathcal{F}_\infty\coloneqq\sigma(\bigcup_{t\ge0}\mathcal{F}_t)$. We define the martingale
\begin{equation*}
 M_t\coloneqq\int_\Lambda\cos(\gb X_t(x))e^{\fb K_t(x,x)}\mu(\dd x).
\end{equation*}
The Sine-Gordon model is then defined by exponentiating this martingale. The multipole picture described in the introduction implies we have to subtract counterterms, the cumulants $\mathcal{C}_{i}(M_t)$ of $M_t$. More precisely, we study the regularized partition function with insertion $F$
\begin{equation}\label{partition-function-definition}
	Z_{\alpha,\beta,t}(F)\coloneqq
	\E[e^{\alpha M_t}F]e^{-\sum_{i=1}^{2n-1}\frac{\alpha^{i}}{i!}\mathcal{C}^{(\beta)}_{i}(M_t)},
\end{equation}
where $n\in\N$ is chosen such that $4-\frac{2}{n-1}\le\gb^2<4-\frac{2}{n}$, (for $n=1$, $0\le \gb^2<2$), $\alpha\in\R$  and $F$ is a bounded $\mathcal{F}_s$-measurable function for $0\le s\le t$. Our main result is then the following:
\begin{theorem}\label{maintheorem}
	Let $\gb^2<4$ and $\alpha\in\R$. Under Assumption \ref{ass:white-noise}, for $F$ bounded and $\mathcal{F}_s$-measurable for some $s\ge0$, the following limit exists
	\begin{equation*}
		Z_{\alpha,\beta}(F)\coloneqq\lim_{t\to\infty}Z_{\alpha,\beta,t}(F).
	\end{equation*}
Then
\begin{equation*}
	F\mapsto\frac{Z_{\alpha,\beta}(F)}{Z_{\alpha,\beta}(1)}
\end{equation*}
extends to all bounded $\mathcal{F}_{\infty}$-measurable $F$, and defines a probability measure on $\mathcal{F}_\infty$.
\end{theorem}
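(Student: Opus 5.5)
The plan follows the martingale method of \cite{LacoinRhodesVargas2023}. Fix $F$ bounded and $\mathcal F_s$-measurable, and write $M_t=M_s+(M_t-M_s)$. The key object is the conditional log-Laplace transform of the increment,
\[
\Phi_{s,t}\coloneqq \ln \E\bigl[e^{\alpha (M_t-M_s)}\,\big|\,\mathcal F_s\bigr],
\]
so that
\[
Z_{\alpha,\beta,t}(F)=\E\Bigl[F\,e^{\alpha M_s}\exp\Bigl(\Phi_{s,t}-\sum_{i\le 2n-1}\tfrac{\alpha^i}{i!}\mathcal C_i(M_t)\Bigr)\Bigr].
\]
Conditionally on $\mathcal F_s$, $M_t-M_s$ is again a (shifted) imaginary chaos driven by the independent field $X_t-X_s$ with bounded-below, $C^2$ background $X_s$. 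We therefore expand $\Phi_{s,t}$ in conditional cumulants: $\Phi_{s,t}=\sum_{i\le 2n-1}\frac{\alpha^i}{i!}\mathcal C_i(M_t-M_s\mid\mathcal F_s)+R_{s,t}$. The aim is to show three things: (i) each conditional cumulant of order $i\le 2n-1$ minus the corresponding unconditional piece converges as $t\to\infty$ to an $\mathcal F_s$-measurable random variable with all exponential moments; (ii) the remainder $R_{s,t}$ converges; (iii) $\mathcal C_i(M_t)-\mathcal C_i(M_t-M_s)$ converges. Dominated convergence then gives existence of $Z_{\alpha,\beta}(F)$.

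For step (i), write each cumulant as a sum over charge configurations $\lambda\in\{\pm1\}^i$ of integrals of the truncated (connected) interaction
\[
\exp\Bigl(-\beta^2\sum_{k<l}\lambda_k\lambda_l K_t(x_k,x_l)\Bigr)
\]
against the phase $e^{i\beta\sum_k\lambda_k X_s(x_k)}$. Only neutral clusters are divergent, and those exactly at orders below the threshold defined by $n$. The charge conjugation symmetrisation is applied here: swapping the positions of opposite charges leaves the interaction invariant but changes the phase. Averaging the two expressions replaces the phase by a difference, such as $\cos(\beta(X_s(x)-X_s(y)))-1$ for a dipole. A Taylor expansion using the $C^2$ bounds of Assumption~\ref{ass:white-noise} (first-order terms cancel by the symmetrisation) gives a factor $O(|x-y|^2 e^{2s}\|\nabla X_s\|^2)$. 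This turns $|x-y|^{-\beta^2}$ into the integrable $|x-y|^{2-\beta^2}$. For a general neutral multipole one uses a tree/Mayer-type decomposition of the connected cumulant via $K_t=\int_0^t Q_u\,\dd u$ and the decay \eqref{laderive}. Each neutral sub-cluster at scale $u$ then gains $e^{-2(u-s)}$, which beats the power-counting divergence $e^{u(\beta^2-2+\dots)}$ precisely when $\beta^2<4$.

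For step (ii), the remainder is controlled by a uniform bound on high-order conditional cumulants, or equivalently on $\E[e^{\alpha(M_t-M_s)}\mid\mathcal F_s]$ after subtraction. I would follow \cite{LacoinRhodesVargas2023}: differentiate in $t$ (Itô calculus along the white-noise time), express $\partial_t\Phi$ through the quadratic variation $\int\!\!\int Q_t\,\dd M\,\dd M$, and show that the resulting terms beyond order $2n-1$ are integrable in $t$, because the multipole at that order is subcritical. A Grönwall/bootstrap argument on $\ln\E[e^{\alpha M_t}\cdots]$ then gives uniform bounds for all real $\alpha$, including negative $\alpha$, where $e^{\alpha M}$ is not monotone. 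This uniform, nonperturbative control of the remainder, with the symmetrisation carried through every cluster, is the main obstacle. I would first try to prove it by induction on the number of charges, using convexity of $\Lambda$ to bound the Taylor remainder along segments.

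Finally, for the measure: the limits give a consistent family $\nu_s(F)=Z_{\alpha,\beta}(F)/Z_{\alpha,\beta}(1)$ on $\mathcal F_s$. Consistency holds because $Z_{\alpha,\beta,t}$ restricted to $\mathcal F_s\subset\mathcal F_{s'}$ agrees. The limiting density $D_s=\lim_t e^{\alpha M_s+\Phi_{s,t}-\dots}/Z_{\alpha,\beta}(1)$ is positive, so each $\nu_s$ is a probability measure absolutely continuous w.r.t.\ $\P|_{\mathcal F_s}$, and $(D_s)$ is a $\P$-martingale. Extension to $\mathcal F_\infty$ follows from uniform integrability of $D_s$. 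I would obtain this from a uniform $L^p$ bound ($p>1$), proved by running the same cumulant estimates with $\alpha$ replaced by $p\alpha$. One could alternatively invoke Kolmogorov extension using the Polish structure of the continuous field, but that gives only a measure, not absolute continuity. The definition $Z_{\alpha,\beta}(F)$ for general bounded $\mathcal F_\infty$-measurable $F$ is then $\E[D_\infty F]Z_{\alpha,\beta}(1)$.
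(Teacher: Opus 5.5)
There is a genuine gap, and it sits exactly at what you call the main obstacle. Step (ii) asks for convergence at \emph{fixed} $s$ of $R_{s,t}$. It also needs a $t$-uniform integrable envelope for $\exp\bigl(\Phi_{s,t}-\sum_{i\le2n-1}\frac{\alpha^i}{i!}\mathcal C_i(M_t)\bigr)$, so that dominated convergence applies.

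\begin{itemize}
\item That is the original renormalization problem again, now with a background field. ``Differentiate in $t$, then Gr\"onwall'' supplies no mechanism: the terms of order $2n,\dots,4n-2$ that It\^o's formula produces (the paper's $Q^{(2n-1,\alpha)}_{t,t}-Q^{(2n-1,\alpha)}_{s,t}$) are random, have no sign, and sit inside an exponential.
\item The exponential moments you claim in (i) are not available from the natural bounds. For instance, $Ce^{[\frac{\beta^2}{2}i-2(i-1)]s}(H_s^{(\gamma)})^i$ in the proof of Lemma~\ref{lem:fixed-scale-convergence} is a polynomial of degree up to $2n-1$ in a H\"older constant with at best Gaussian tails.
\end{itemize}

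The paper never proves convergence at fixed $s$. It restricts to $E_s=\{H^{(\gamma)}_s<e^{\eta s}\}$ and drops both the remainder and the martingale increment to form $V_F(s,t)$. Its exponent converges a.s.\ and is bounded by $C_s$ on $E_s$, so $V_F(s,t)$ converges in $t$. The paper then proves $e^{V_F(s,t)}/Z_{\alpha,\beta,t}(F)=1+o_s(1)$ uniformly in $t\ge s$, compares $\limsup_t$ with $\liminf_t$, and lets $s\to\infty$.

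That two-sided comparison rests on three ingredients absent from your plan:
\begin{itemize}
\item the Gaussian domination $\E[e^{\alpha M_t+\lambda Y}]\le e^{\lambda^2\E[Y^2]/2}\E[e^{|\alpha|M_t}]$, which gives polynomial moments of $H^{(\gamma)}_u$ under the tilted laws $\P_{\alpha,t}$, so Markov's inequality replaces exponential moments;
\item Girsanov together with a bootstrap on the drift $\langle X(x)-X(y),N^{(2n-1,\alpha,t)}\rangle$, which keeps the field H\"older on all scales in $[s,t]$;
\item for $\alpha<0$, a bound on $\E[e^{|\alpha|M_t}]/\E[e^{\alpha M_t}]$ obtained from the case $\alpha>0$ and the convergence of odd cumulants (Theorem~\ref{thm:cumulants}).
\end{itemize}
Also, Assumption~\ref{ass:white-noise} does not guarantee that $X_s$ is almost surely $C^1$, let alone $C^2$. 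The paper replaces your gradient Taylor expansion by $\gamma$-H\"older bounds with $\beta^2/4<\gamma<1$.

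The extension step also fails as proposed. Replacing $\alpha$ by $p\alpha$ cannot give a uniform $L^p$ bound, because the counterterms are not homogeneous in $\alpha$. The natural link, conditional Jensen plus Fatou, bounds $\E[D_s^p]$ by $\liminf_t\E[W_t^p]/Z_{\alpha,\beta,t}(1)^p$, where $W_t=e^{\alpha M_t-\sum_{i\le2n-1}\frac{\alpha^i}{i!}\mathcal C_i(M_t)}$ and
\[
\E[W_t^p]=Z_{p\alpha,\beta,t}(1)\exp\Bigl(\sum_{i\le2n-1}\frac{(p^i-p)\alpha^i}{i!}\mathcal C_i(M_t)\Bigr).
\]
The term $\frac{(p^2-p)\alpha^2}{2}\mathcal C_2(M_t)$ tends to $+\infty$ as soon as $\beta^2\ge2$, and it dominates the other divergent even cumulants when $\beta^2<4$.

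More fundamentally, for $\beta^2\ge2$ the limit should not be expected to be absolutely continuous with respect to $\P$ on $\mathcal F_\infty$. Heuristically, $(M_t-\mu(\Lambda))/\mathcal C_2(M_t)\to0$ in $L^2(\P)$, while the tilt $e^{\alpha M_t}$ shifts its mean to roughly $\alpha$. So $(D_s)$ need not be uniformly integrable, and $D_\infty$ need not be a density. The theorem asserts only a probability measure on $\mathcal F_\infty$. The route you set aside, consistency of the restrictions to the $\mathcal F_s$ followed by a Kolmogorov-type extension, is the appropriate one.
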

The construction of the correlation functions will be the subject of a forthcoming paper. We expect that the methods of our current paper will apply.

\subsection{Organization of the paper}
In Section 3, we will recall the necessary ingredients of \cite{LacoinRhodesVargas2023}, in particular  their representation of the partition function. We will also introduce a class of test functions that
allows us to exploit the cancellations from charge conjugations. In Section 4, using these test functions, we prove an estimate on a family of functions $F^{(i,p)}$ that are central to our representation of the partition function. The cancellations due to the charge conjugations happen precisely for these functions. Section 5 establishes auxiliary results. Section 6 uses these results and the estimate from Section 4 to prove the main theorem.

\section{Representation of the partition function}

\subsection*{Notation and conventions}
For the remainder of the paper, we write $\R_+=[0,\infty)$ and $\N=\{1,2,\ldots\}$.
For a finite
set $A$, $|A|$ denotes its cardinality. For real numbers $a,b$, we use
\[
 a\wedge b\coloneqq\min\{a,b\},\qquad
 a\vee b\coloneqq\max\{a,b\},\qquad
 a_+\coloneqq a\vee0.
\]
We write $|\cdot|$ for the Euclidean norm and $\| \cdot \|_{\mathrm{op}}$ for the operator norm induced by the Euclidean norm. For a bounded random variable $F$, $\|F\|_\infty$ is the essential
supremum of $|F|$ with respect to $\P$. The indicator of an event $E$ is $\1_E$ and
$\langle M,N\rangle$ is the quadratic covariation of continuous
semimartingales $M,N$, with $\langle M\rangle\coloneqq\langle M,M\rangle$.

In estimates, $C$, with or without subscripts, denotes a finite positive
deterministic constant whose value may change from line to line. Constants may also
depend on the fixed parameters and integer indices in the statement,
but are independent of the time variables and spatial configurations
unless such dependence is indicated, as in $C_s$. Constants used
in the definition of a function are fixed for that definition.

\subsection{Representation of the partition function}

To prove Theorem \ref{maintheorem} we will recall the representation of Eq.~\eqref{partition-function-definition} introduced in \cite[Lemma~3.1]{LacoinRhodesVargas2023}. For the martingale $M_t$ we define recursively the processes $(A^{(i)}_t)_{t\ge0}$ and $(M^{(i,t)}_s)_{t\ge s\ge0}$ for each $i\ge1$; the first cases are
\[
	A^{(1)}_t=M_t,\quad M^{(1,t)}_s=M_s-\mu(\Lambda),
\]
and for $i\ge2$
\begin{equation}\label{A-representation}
A_t^{(i)}=\frac{1}{2}\sum_{j=1}^{i-1}\langle M^{(j,t)},M^{(i-j,t)}\rangle_t, \quad M_s^{(i,t)}=\E[A_t^{(i)}\mid\mathcal F_s]-\E[A_t^{(i)}].
\end{equation}
Then, following \cite{LacoinRhodesVargas2023}, the cumulants of $M_t$ are given by
\begin{equation}\label{cumulant-identity}
	\mathcal{C}_{i}^{(\beta)}(M_t)=i!\E[A^{(i)}_t],
\end{equation}
and the partition function $Z_{\alpha,\beta,t}$ has the representation
\begin{equation}\label{partition-expansion}
	Z_{\alpha,\beta,t}(F)=\E\left[Fe^{{N_t^{(2n-1,\alpha,t)}-\frac{1}{2}\left\langle N^{(2n-1,\alpha,t)}\right\rangle_t+Q_{t,t}^{(2n-1,\alpha)}}}\right],
\end{equation}
where
\begin{equation}\label{remainder-definition}
	N_s^{(2n-1,\alpha,t)}=\sum_{i=1}^{2n-1}\alpha^iM_s^{(i,t)},\quad Q_{s,t}^{(2n-1,\alpha)}=\frac{1}{2}\sum_{i=2n}^{4n-2}\alpha^i\sum_{k=i+1-2n}^{2n-1}\langle M^{(k,t)},M^{(i-k,t)}\rangle_s.
\end{equation}
The advantage of this representation will be the martingale $(N_s^{(2n-1,\alpha,t)})_{0\le s\le t}$, which allows us to consider the limit $t\to\infty$ for each scale $s\ge0$. We will now introduce the explicit formulas for the quantities above, all of which are derived in \cite{LacoinRhodesVargas2023}.

First, let $\x=(x_1,\ldots,x_i)\in \Lambda^i$ be a charge configuration. For $0\le p\le \lfloor i/2\rfloor$ negative charges we define
\[
\lambda_{k}^{(p)}\coloneqq\begin{cases}
	-1,\quad k\le p,\\
	+1,\quad k>p.
\end{cases}
\]
We will also write
$Y_p(\x,u)\coloneqq\sum_{k=1}^i\lambda_k^{(p)}X_u(x_k)$ and
$\mu(\dd\x)\coloneqq\prod_{k=1}^i\mu(\dd x_k)$.
For $0\le s\le u$, define the regularised potentials
\begin{align*}
	U_{i,p}(\x,u)&\coloneqq2\sum_{1\le k<l\le i}\lambda^{(p)}_k\lambda^{(p)}_lK_u(x_k,x_l),\\
	U_{i,p}(\x,s,u)&\coloneqq U_{i,p}(\x,u)-U_{i,p}(\x,s).
\end{align*}
The formulas below follow the recursion for $A_t^{(i)}$ and
$M_s^{(i,t)}$. For $i\ge2$ and $1\le j\le i-1$, the brackets in
the definition of $A_t^{(i)}$ take the form
\begin{equation}\label{A-bracket}
 \langle M^{(i-j,t)},M^{(j,t)}\rangle_s
 =\sum_{p=0}^{\lfloor i/2 \rfloor}\int^s_0 \int_{\Lambda^i} F^{(i,j,p)}(\bx,u,t)e^{\sum_{k=1}^i\frac{\gb^2}{2}  K_u(x_k,x_k)}  \cos\left( \gb  Y_{p}(\x,u)\right) \mu(\dd \bx) \, \dd u,
\end{equation}
where $F^{(i,j,p)}$ are deterministic functions. The index $i$ counts
the coordinates and $p$ counts the negative charges.
Starting from $M_s^{(1,t)}=M_s-\mu(\Lambda)$, its quadratic variation gives
the initial cases
\begin{equation}\label{initial-split-kernels}
	F^{(2,1,1)}(x_1,x_2,u,t)=-F^{(2,1,0)}(x_1,x_2,u,t)=\fb Q_u(x_1,x_2).
\end{equation}
Taking half the sum over $j$ in Eq.~\eqref{A-bracket} at time $t$
gives $A_t^{(i)}$. Then Eq.~\eqref{cumulant-identity} gives
\begin{equation}\label{cumulantformula}
	\mathcal{C}_i^{(\beta)}(M_t)=i!\sum_{p=0}^{\lfloor i/2\rfloor}\int_0^t\int_{\Lambda^i}F^{(i,p)}(\x,u,t)e^{-\fb U_{i,p}(\x,u)}\mu(\dd\x)\dd u,
\end{equation}
where we set $F^{(i,p)}\coloneqq\frac12\sum_{j=1}^{i-1}F^{(i,j,p)}$. Taking the conditional expectation given
$\mathcal F_s$ of half the sum over $j$ in Eq.~\eqref{A-bracket}, evaluated at time $t$, and subtracting $\E[A_t^{(i)}]$, we also obtain, for
$0\le s\le t$ and $i\ge2$,
\begin{equation}\label{martingale-representation}
 M_s^{(i,t)}
 =\sum_{p=0}^{\lfloor i/2\rfloor}\int_0^t\int_{\Lambda^i}
 F^{(i,p)}(\x,u,t)e^{-\fb U_{i,p}(\x,u)}\\
 \times\left[
 \cos(\gb Y_p(\x,u\wedge s))
 e^{\fb\E[(Y_p(\x,u\wedge s))^2]}-1
 \right]\mu(\dd\x)\dd u.
\end{equation}
 To obtain the recursion  for $F^{(i,j,p)}$, we compute $\langle M^{(i,t)},M^{(j,t)}\rangle$ using Eq.~\eqref{martingale-representation}: Define
\begin{equation}\label{integrated-split-kernel}
 \mathcal{F}^{(i,j,p)}(\x,s,t)=\int_s^t F^{(i,j,p)}(\x,u,t)e^{-\fb U_{i,p}(\x,s,u)}\dd u,
\end{equation}
and
\begin{align}
 \mathcal F^{(i,p)}(\x,s,t)
 &=\frac12\sum_{k=1}^{i-1}\mathcal F^{(i,k,p)}(\x,s,t),
 \qquad i\ge2,\label{combined-integrated-kernel}\\
 \mathcal F^{(1,0)}(\x,s,t)&=1.\notag
\end{align}
Write $\x_1=(x_1,\ldots,x_i)$ and
$\x_2=(x_{i+1},\ldots,x_{i+j})$, and set
\begin{equation*}
 \mathcal{Q}_{i,j,p,q}(\x_1,\x_2,u)
 =\sum_{k=1}^{i}\sum_{l=i+1}^{i+j}
 \la^{(p)}_k\la^{(q)}_{l-i}Q_u(x_k,x_l).
\end{equation*}
The bracket computation gives the recursive identity
(cf.~\cite[Eqs.~(4.28) and~(4.32)]{LacoinRhodesVargas2023})
\begin{multline}\label{recursive-identity}
\sum_{r=0}^{\lfloor (i+j)/2 \rfloor} \int_{\Lambda^{i+j}} F^{(i+j,j,r)}(\bx,u,t)e^{\sum_{k=1}^{i+j}\frac{\gb^2}{2}K_u(x_k,x_k)}\cos\left(\gb Y_{r}(\x,u)\right)\mu(\dd\bx) \\
=\gb^2\sum_{p=0}^{\lfloor i/2\rfloor}\sum_{q=0}^{\lfloor j/2\rfloor}\int_{\Lambda^{i+j}}\mathcal{Q}_{i,j,p,q}(\x_1,\x_2,u)\mathcal{F}^{(i,p)}(\x_1,u,t)\mathcal{F}^{(j,q)}(\x_2,u,t)\\
\times e^{\sum_{k=1}^{i+j}\fb K_u(x_k,x_k)}\sin\left(\gb Y_{p}(\x_1,u)\right)\sin\left(\gb Y_{q}(\x_2,u)\right)\mu(\dd\x).
\end{multline}
The factors $\mathcal F^{(i,p)}$ and $\mathcal F^{(j,q)}$ are given by earlier steps
of the recursion. To choose a representative for each $F^{(i+j,j,r)}$, we write
\begin{multline}\label{sine-product}
 \sin\left(\gb Y_{p}(\x_1,u)\right)\sin\left(\gb Y_{q}(\x_2,u)\right)\\
=\frac{1}{2}\left[\cos\left(\gb\bigl(Y_{p}(\x_1,u)-Y_{q}(\x_2,u)\bigr)\right)-\cos\left(\gb\bigl(Y_{p}(\x_1,u)+Y_{q}(\x_2,u)\bigr)\right)\right].
\end{multline}
For the sum of the two $Y$ terms, let $T^+$ put the coordinates with
negative charges first. For their difference, define $T^-$ in the same
way, reversing all charges if necessary so that at most half are
negative. Since cosine is even, these coordinate permutations satisfy
\begin{align}
 &\cos\left(\gb Y_{r}(T^-\x,u)\right)=\cos\left(\gb\bigl(Y_{p}(\x_1,u)-Y_{q}(\x_2,u)\bigr)\right),\label{relabel-minus}\\
 &\cos\left(\gb Y_{p+q}(T^+\x,u)\right)=\cos\left(\gb\bigl(Y_{p}(\x_1,u)+Y_{q}(\x_2,u)\bigr)\right),\label{relabel-plus}
\end{align}
where $r=\min(p+j-q,q+i-p)$. For each pair $(p,q)$, Eqs.~\eqref{sine-product}--\eqref{relabel-plus}
then give a contribution to $F^{(i+j,j,p+q)}(T^+\x,u,t)$ with sign $-$
and a contribution to $F^{(i+j,j,r)}(T^-\x,u,t)$ with sign $+$.
Their common factor is
\begin{equation}\label{ijodd}
 \fb\mathcal{Q}_{i,j,p,q}(\x_1,\x_2,u)\mathcal{F}^{(i,p)}(\x_1,u,t)\mathcal{F}^{(j,q)}(\x_2,u,t).
	\end{equation}
After these changes of variables, we define each $F^{(i+j,j,r)}$
by adding the coefficients of $\cos(\gb Y_r(\x,u))$.
For fixed $(p,q)$, the two terms in Eq.~\eqref{sine-product}
contribute to the same function when $r=p+q$, which is equivalent
to $i=2p$ or $j=2q$. We can then combine them as follows.

Suppose $i=2p$. Let $T_1$ exchange $x_k$ and $x_{k+p}$ for
$k\le p$ and leave $\x_2$ unchanged. We may choose
$T^-=T^+\circ T_1$. Changing variables by $T_1$ in the term
with sign $+$ makes its cosine equal to that of the term with
sign $-$, while $\mathcal Q_{2p,j,p,q}$ changes sign.
Adding the two terms therefore gives the following contribution
to $F^{(i+j,j,p+q)}(T^+\x,u,t)$:
\begin{equation}\label{ieven}
 -\fb\mathcal{Q}_{2p,j,p,q}(\x_1,\x_2,u)\left(\mathcal{F}^{(2p,p)}(\x_1,u,t)+\mathcal{F}^{(2p,p)}(T_1\x_1,u,t)\right)\mathcal{F}^{(j,q)}(\x_2,u,t).
\end{equation}
The same argument applies when $j=2q$, with $T_2$ exchanging
$x_{i+l}$ and $x_{i+q+l}$ for $l\le q$ and leaving $\x_1$ unchanged.
If both $i=2p$ and $j=2q$, changing variables by $T_1$ and $T_2$
together leaves both $\mathcal Q$ and the cosine unchanged.
We may therefore average Eq.~\eqref{ieven} with its value at
$(T_1\x_1,T_2\x_2)$ without changing the integral.

 This gives
\begin{equation}\label{ijeven}
 -\frac{\gb^2}{4}\mathcal{Q}_{2p,2q,p,q}(\x_1,\x_2,u)\left(\mathcal{F}^{(2p,p)}(\x_1,u,t)+\mathcal{F}^{(2p,p)}(T_1\x_1,u,t)\right)\left(\mathcal{F}^{(2q,q)}(\x_2,u,t)+\mathcal{F}^{(2q,q)}(T_2\x_2,u,t)\right).
\end{equation}

\subsection{Class of test functions}
The remainder of the paper focuses on the following observation:
the recursive formulas~\eqref{recursive-identity}--\eqref{ijeven} express the cumulants in terms of lower-order $\mathcal{F}$ factors. For example, a contribution to $\int_{\Lambda^3}\mathcal{F}^{(3,1)}(\x,0,t)\mu(\dd\x)$ is of the form
\begin{align}
&\gb^2\int_{\Lambda^3}\int_0^t\phi(\x,u)\mathcal{F}^{(2,1)}(x_1,x_2,u,t)\dd u\,\mu(\dd\x),\label{prototype-contribution} \\
&\phi(\x,u)=(Q_u(x_1,x_3)-Q_u(x_2,x_3))e^{-\fb U_{3,1}(\x,u)}.\label{prototype}
\end{align}
The lower-order $\mathcal{F}^{(2,1)}$ has a singularity of order $|x_1-x_2|^{-\gb^2}$ as the cutoff $t$ is removed, which is locally integrable only when $\beta^2<2$. Letting $\tau$ be the charge conjugation that exchanges $x_1$ and $x_2$, we can symmetrise Eq.~\eqref{prototype-contribution} as
\begin{equation*}
 \frac{\gb^2}{2}\int_{\Lambda^3}\int_0^t(\phi(\x,u)+\phi(\tau\x,u))\mathcal{F}^{(2,1)}(x_1,x_2,u,t)\dd u\,\mu(\dd\x),
\end{equation*}
where
\begin{equation*}
\begin{split}
\phi(\x,u)+\phi(\tau\x,u)
={}&2\bigl(Q_u(x_1,x_3)-Q_u(x_2,x_3)\bigr)e^{\gb^2K_u(x_1,x_2)}\\
&\times\sinh\!\left(\gb^2\bigl(K_u(x_1,x_3)-K_u(x_2,x_3)\bigr)\right).
\end{split}
\end{equation*}
At fixed $u$, the difference $Q_u(x_1,x_3)-Q_u(x_2,x_3)$, and the $\sinh$ term each provide a factor of $|x_1-x_2|$, improving the local integrability to $\gb^2<4$. Following this idea, we will define a class of test functions, the function $(x_1,x_2,u)\mapsto\int_\Lambda\phi((x_1,x_2,x_3),u)\mu(\dd x_3)$ from Eq.~\eqref{prototype} being a prototype member of this class.

Before this, we introduce some notation: for a vector $\x=(x_1,\ldots,x_n)$, $n\ge2$, and integers $1\le a<b\le n$, let $\tau_{(a,b)}$ be the transposition that exchanges components $a$ and $b$.
\begin{equation*}
 \tau_{(a,b)}(x_1,\ldots,x_a,\ldots,x_b,\ldots,x_n)=(x_1,\ldots,x_b,\ldots,x_a,\ldots,x_n).
\end{equation*}
Let $P_n\coloneqq\{(a,b):1\le a<b\le n\}$. We define
\[
\Omega_n\coloneqq\{M\subseteq P_n:\{a,b\}\cap\{c,d\}=\emptyset
\text{ for all distinct }(a,b),(c,d)\in M\}.
\]
More generally, we will write $\Omega_\x$ when we take the set of labels from $\x$.
For $A\in\Omega_n$, write
\[
 D_A\coloneqq\prod_{a\in A}(1-\tau_a),
 \qquad \tau_A\coloneqq\prod_{a\in A}\tau_a,
 \qquad D_\emptyset=\tau_\emptyset=\mathrm{Id}.
\]
Here $(\tau_a f)(\x)=f(\tau_a\x)$, and for a single pair $a$ we abbreviate $D_{\{a\}}$ to $D_a=1-\tau_a$. Since the swaps in $A$ are disjoint, the products do not depend on their order.
 Empty products of scalar factors equal $1$; empty products of permutation or difference operators are the identity operator and leave their argument unchanged.  Relevant formulas for these operators are proved in Appendix~\ref{app:swap-algebra}.
\begin{definition}[class of test functions]\label{TestDef}
    Let $i\ge2$, $0\le p\le\lfloor i/2\rfloor$, and fix $\gb^2/4<\gamma\le1$. Then $\mathcal D_{i,p}^\gamma(\Lambda^i\times\R_+)$ is the class of measurable functions $(\x,u)\mapsto\phi_i(\x,u)$, with $\x\in \Lambda^i$, for which:
    \begin{enumerate}
        \item\label{test:envelope} there exists a measurable majorant $F_i(\x,u)=\sum_{k=1}^i f(x_k,u)$, where $f\ge0$ and $f(\cdot,u)\in L^1(\Lambda,\mu)$ for every $u$;
        \item for every $u\ge0$ and $M\in\Omega_\x$,
        \begin{equation}\label{testprop}
        \left|D_M
        \phi_i(\x,u)\right|
        \le F_i(\x,u)
        \prod_{(m_1,m_2)\in M}\bigl[1\wedge(e^u|x_{m_1}-x_{m_2}|)\bigr]^\gamma.
        \end{equation}
        \item\label{test:symmetry} for every $u\ge0$, $\phi_i(\cdot,u)$ is invariant under permutations of the first $p$ coordinates and, independently, of the last $i-p$ coordinates.
    \end{enumerate}
\end{definition}

The main proposition we are going to prove is the following:

\begin{proposition}\label{prop:pairing}
Let $\gb^2<4$, $i\ge2$, $0\le p\le\lfloor i/2\rfloor$, and $\phi_i\in\mathcal D_{i,p}^\gamma(\Lambda^i\times\R_+)$, with $\gb^2/4<\gamma\le1$ and majorant $F_i(\x,u)=\sum_{k=1}^i f(x_k,u)$. There exists a constant $C>0$, independent of $\phi_i$ and $f$, such that for every $t\ge u\ge0$,
\begin{equation}\label{propz1}
 \left|\int_{\Lambda^i}F^{(i,p)}(\x,u,t)\phi_i(\x,u)
       \mu(\dd\x)\right|
 \le Ce^{-2(i-1)u}\int_\Lambda f(x,u)\mu(\dd x),
\end{equation}
where $\x=(x_1,\ldots,x_i)$. Furthermore,
\begin{equation}\label{pairing-limit}
	\lim_{t\to \infty}\int_{\Lambda^i}F^{(i,p)}(\x,u,t)\phi_i(\x,u)
       \mu(\dd\x)
\end{equation}
exists. The same bound and limit assertion hold with $F^{(i,p)}$ replaced by $F^{(i,j,p)}$, for every $1\le j\le i-1$.
\end{proposition}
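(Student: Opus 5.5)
I will prove the proposition by strong induction on $i$, carrying the statement for the split kernels $F^{(i,j,p)}$ as the basic object. The bounds for $F^{(i,p)}=\frac12\sum_j F^{(i,j,p)}$ then follow by summation.

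\emph{Base case.} For $i=2$, Eq.~\eqref{initial-split-kernels} gives $F^{(2,1,p)}=\pm\fb Q_u(x_1,x_2)$. The bound~\eqref{propz1} then follows from $|\phi_2|\le f(x_1,u)+f(x_2,u)$ and $\int_\Lambda |Q_u(x_1,x_2)|\,\dd x_2\le Ce^{-2u}$, which is a consequence of Assumption~\ref{ass:white-noise}. The limit is trivial, since $F^{(2,1,p)}$ does not depend on $t$.

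\emph{Inductive step: reduction to lower orders.} For $i+j$ with $i,j$ smaller, I unfold the recursive identity~\eqref{recursive-identity} together with the representatives~\eqref{ijodd}--\eqref{ijeven}. Pairing $F^{(i+j,j,r)}$ against $\phi_{i+j}$ produces integrals of the form
\[
\int \mathcal Q_{i,j,p,q}(\x_1,\x_2,u)\,\mathcal F^{(i,p)}(\x_1,u,t)\,\mathcal F^{(j,q)}(\x_2,u,t)\,\phi_{i+j}(T^{\pm}\x,u)\,\mu(\dd\x).
\]
I expand $\mathcal F^{(i,p)}(\x_1,u,t)=\int_u^t F^{(i,p)}(\x_1,v,t)e^{-\fb U_{i,p}(\x_1,u,v)}\,\dd v$, and similarly for $\mathcal F^{(j,q)}$. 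In each expansion I integrate out one block of coordinates against the inductive hypothesis at scale $v\ge u$. This requires a lemma: if $\psi$ belongs to $\mathcal D^\gamma$, then, for fixed $\x_2$, the function
\[
\x_1\mapsto \mathcal Q\,e^{-\fb U_{i,p}(\x_1,u,v)}\,\psi\cdot(\text{other factor})
\]
belongs to $\mathcal D^\gamma_{i,p}(\cdot\times\R_+)$ at scale $v$, with a majorant $f'$ whose $L^1$ norm is controlled.

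Proving this lemma is the heart of the argument. The key points are as follows.
\begin{enumerate}
\item For each pair $(m_1,m_2)\in M$, the difference operator $D_M$ falls either on $Q_u$ or on $K_v-K_u$. By the gradient bounds in~\eqref{laderive}, this yields the factor $[1\wedge e^v|x_{m_1}-x_{m_2}|]^\gamma$. I interpolate between the trivial bound and the Lipschitz bound, using $\gamma\le 1$, and use the swap-algebra formulas of Appendix~\ref{app:swap-algebra} (a Leibniz rule for $D_M$ on products).
\item The factor $e^{-\fb U(\x,u,v)}$ is bounded, since $|K_v-K_u|\le C(v-u)$ only at short distances. I expect to pay $e^{C\beta^2(v-u)}$ here, and to control this with the decay $e^{-2(i-1)v}$ supplied by the inductive hypothesis.
\item The scale counting is as follows. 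Integrating $Q_u$ in one variable costs $e^{-2u}$, and the two sub-blocks give $e^{-2(i-1)v}e^{-2(j-1)v'}$. After the $\dd v\,\dd v'$ integrals the total is $e^{-2(i+j-1)u}$, provided the exponents from $U$ do not exceed $2(i-1)$. This is where the restriction $\gb^2<4$ enters, through the multipole counting. It is the step I regard as the main obstacle.
\end{enumerate}

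\emph{Cancellation for neutral blocks.} The dangerous terms are those with $i=2p$ or $j=2q$, where $U_{2p,p}$ can be very negative, as in a dipole collapse. For these terms I use the symmetrised forms~\eqref{ieven} and~\eqref{ijeven}. The point is that the test function seen by $F^{(2p,p)}$ is then antisymmetrised under $T_1$. Writing $T_1$ as a product of disjoint transpositions, as in $\Omega$, I gain the factor $\prod[1\wedge e^v|x_k-x_{k+p}|]^\gamma$, which compensates $|x_k-x_{k+p}|^{-\gb^2/2}$ per pair because $\gamma>\gb^2/4$. This is precisely why the class $\mathcal D^\gamma$ is required to be stable under $D_M$ for all matchings $M$.

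\emph{The limit $t\to\infty$.} The only $t$-dependence is in the upper limits of the $\dd v$ integrals. Hence the existence of the limit follows by dominated convergence from the same bounds: the integrands are dominated uniformly in $t$ by integrable functions of $v$ decaying like $e^{-\epsilon v}$, and the inner limits exist by induction.
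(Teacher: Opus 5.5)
Your architecture matches the paper's: induction on the order, expanding the $\mathcal F$ factors into time integrals, integrating one block at a time against the inductive hypothesis via a multiplier lemma for $e^{-\fb U}$, symmetrising with~\eqref{ieven}--\eqref{ijeven}, and dominated convergence for the limit. However, the step you defer as ``the main obstacle'' is where the proof actually lives, and the mechanism you propose for it does not close. The coefficient in your loss $e^{C\gb^2(v-u)}$ is decisive. The time integral $\int_u^\infty e^{\fb c(v-u)}e^{-2(i-1)v}\,\dd v$ is $O(e^{-2(i-1)u})$ only if $\fb c<2(i-1)$. The crude bound~\eqref{energy-crude} gives $c=i$, which yields only $\gb^2<4-4/i$ (already $\gb^2<2$ for a dipole). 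To reach $\gb^2<4$ you need the Onsager-type bounds of Theorem~\ref{thm:onsager}. These give $c=i-1$, but for a neutral block $i=2p$ only at the price of an extra singular factor $[1\wedge e^u m(\x_1)]^{-\fb}$, where $m$ is the matching distance~\eqref{matching-distance}. That factor must be absorbed by a vanishing factor $[1\wedge e^u m(\x_1)]^{2\gamma}$ with $2\gamma>\fb$, which is what the condition $\gamma>\gb^2/4$ is for.

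Your cancellation argument does not produce this factor. The operator $1-T_1=1-\prod_k\tau_{(k,k+p)}$ is not $\prod_k(1-\tau_{(k,k+p)})$. Telescoping as in~\eqref{charge-reversal-telescoping} along an optimal matching yields only one factor $[1\wedge e^u m(\x_1)]^\gamma$, not a product over pairs. A single power $\gamma\le1$ cannot beat $\fb$ once $\gb^2>2$; your justification ``because $\gamma>\gb^2/4$'' silently uses two powers of $\gamma$. The missing second factor comes from the kernel. A collapsing neutral block couples to the rest only through differences of $Q_u$, so $|\mathcal Q_{2p,j,p,q}|\le[1\wedge m(\x_1)e^u]\,G_{2p,j}$ by~\eqref{Q-one-neutral}, and the two factors together give $[1\wedge e^u m]^{1+\gamma}\le[1\wedge e^u m]^{2\gamma}$.

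Moreover, the inductive hypothesis is applied at the later scale $v$, so this improved vanishing must hold for $D_M$ of the whole product $\mathcal Q\Phi$ for every $M$, not just for $M=\emptyset$. Swaps move the matching distance, so this requires the $\chi_M$ bookkeeping~\eqref{unweighted-property}--\eqref{chi-definition}, the estimate~\eqref{jgoal}, and the case analysis in Lemma~\ref{lem:energy-multiplier}(2). That is what yields a majorant growing only like $e^{\fb(i-1)(v-u)}$ rather than $e^{\fb i(v-u)}$. A smaller point: products such as $F_1(\x_1)G_{i,j}(\x_1,\x_2,u)$, viewed as functions of $\x_1$, are not of the sum form required in Definition~\ref{TestDef}. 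This is why the paper splits $\Phi=\Phi_1+\Phi_2$ and chooses the order of the block integrations accordingly, so your ``majorant $f'$ with controlled $L^1$ norm'' needs that extra step.
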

This proposition will be the main tool for us to prove Theorem \ref{maintheorem}. Furthermore, we can prove the following convergence statements for the cumulants. This is consistent with the multipole picture, in the sense that each even cumulant converges below its corresponding threshold.
\begin{theorem}\label{thm:cumulants}
Assume $\gb^2<4$. For every integer $n\ge1$,
$\mathcal C_{2n-1}(M_t)$ converges to a finite limit as
$t\to\infty$. Moreover, $\mathcal C_{2n}(M_t)$ converges
to a finite limit whenever $\gb^2<4-2/n=\beta_n^2$.
\end{theorem}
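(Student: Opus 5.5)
The plan is to express each cumulant as a time integral of a pairing to which Proposition~\ref{prop:pairing} applies, with the Boltzmann factor as the test function, and then to count how fast that factor can grow in $u$. By Eq.~\eqref{cumulantformula},
\[
\mathcal C_k^{(\gb)}(M_t)=k!\sum_{p=0}^{\lfloor k/2\rfloor}\int_0^t I_{k,p}(u,t)\,\dd u,\qquad I_{k,p}(u,t)\coloneqq\int_{\Lambda^k}F^{(k,p)}(\x,u,t)\,e^{-\fb U_{k,p}(\x,u)}\mu(\dd\x).
\]
For each fixed $u$, I would apply Proposition~\ref{prop:pairing} with $\phi_k(\x,u)\coloneqq e^{-\fb U_{k,p}(\x,u)}$. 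Then \eqref{pairing-limit} gives pointwise convergence of $I_{k,p}(u,t)$ as $t\to\infty$, and \eqref{propz1} gives a $t$-uniform bound. The main step is therefore to check that $\phi_k\in\mathcal D^\gamma_{k,p}$ with a majorant $f(x,u)=C e^{c_{k,p}u}$, where $c_{k,p}$ is the maximal growth rate of the Boltzmann factor. Once this holds, $|I_{k,p}(u,t)|\le C e^{(c_{k,p}-2(k-1))u}\mu(\Lambda)$ uniformly in $t\ge u$.

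\textbf{Membership in the test class.} The symmetry condition \eqref{test:symmetry} is immediate, since $U_{k,p}$ is invariant under permutations among like charges. For \eqref{testprop}, take $M\in\Omega_\x$.
\begin{itemize}
\item A swap between two like charges leaves $\phi_k$ unchanged, so $D_M\phi_k=0$ whenever $M$ contains such a pair.
\item For swaps between opposite charges, I would expand $D_M\phi_k$ as an alternating sum over sub-swaps. Writing $\phi_k=\exp(-\fb U_{k,p})$, I would bound the resulting iterated differences by mean-value estimates. These use $|K_u(x,z)-K_u(y,z)|\le\int_0^u|\nabla Q_v|\,|x-y|\,\dd v$ together with \eqref{laderive}, which gives $\lesssim 1\wedge(e^u|x-y|)$ up to $\ln(e^u|x-y|)$-type factors after dividing by $|x-y|$. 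Taking $\gamma<1$ absorbs these logarithms.
\item Each difference costs at most a constant times the supremum of $\phi_k$ over the $2^{|M|}$ swapped configurations, and that supremum is bounded by $\sup_\x\phi_k(\x,u)$.
\end{itemize}

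\textbf{Growth of the Boltzmann factor.} By \eqref{log-covariance-bound} and the fact that $K_u(x,x)=u+O(1)$, we have $-\fb U_{k,p}=-\gb^2\sum_{a<b}\lambda_a\lambda_b K_u(x_a,x_b)$. I would bound its maximum with the standard Coulomb-gas estimate (Onsager-type grouping into nearest-neighbour pairs). Only bound neutral pairs give a positive contribution $\gb^2 u$ each, and distances enter through $\ln(|x-y|\vee e^{-u})$. This yields
\[
\sup_\x e^{-\fb U_{k,p}(\x,u)}\le C e^{\gb^2\lfloor k/2\rfloor u}.
\]
I expect this Coulomb-gas bound to be the main obstacle. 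It must hold uniformly for non-translation-invariant $K$, and I would prove it by induction on the number of charges, removing the closest pair at each step.

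\textbf{Counting and conclusion.} Set $c_{k,p}=\gb^2\lfloor k/2\rfloor$.
\begin{itemize}
\item For $k=2n-1$, the condition $c_{k,p}<2(k-1)$ reads $(n-1)\gb^2<4(n-1)$, which holds for all $\gb^2<4$.
\item For $k=2n$, it reads $n\gb^2<2(2n-1)$, i.e.\ $\gb^2<4-2/n=\gb_n^2$.
\end{itemize}
Under these conditions $u\mapsto C e^{(c_{k,p}-2(k-1))u}$ is integrable on $\R_+$. Writing $\int_0^t I_{k,p}(u,t)\,\dd u=\int_0^\infty\1_{u\le t}I_{k,p}(u,t)\,\dd u$, dominated convergence together with the pointwise limit \eqref{pairing-limit} shows that $\mathcal C_k(M_t)$ converges to the finite limit $k!\sum_p\int_0^\infty\lim_{t\to\infty}I_{k,p}(u,t)\,\dd u$.

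If the test-class verification at the level of the full Boltzmann factor fails, I would fall back on one further step of the recursion \eqref{recursive-identity}. This writes $\int\mathcal F^{(k,p)}(\x,0,t)\,\mu(\dd\x)$ as $\int_0^t$ of $Q_u\,\mathcal F\,\mathcal F$ against $e^{-\fb U}$. I would then apply Proposition~\ref{prop:pairing} to the $F^{(k,j,p)}$ version, with the $\mathcal Q$-weighted factor as the test function, as in the prototype \eqref{prototype}.
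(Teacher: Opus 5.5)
Your proposal is the paper's proof. It applies Proposition~\ref{prop:pairing} with $\phi=e^{-\fb U_{k,p}(\cdot,u)}$; its membership in the test class is Lemma~\ref{lem:energy-multiplier} (item 1, constant test function), and its growth bound is already Eq.~\eqref{energy-crude} together with Theorem~\ref{thm:onsager}, so no new Coulomb-gas induction is needed; your uniform rate $\gb^2\lfloor k/2\rfloor$ matches the paper's $\fb(k-\kappa_{k,p})$, except for even $k$ with $p<k/2$, where it is weaker but still suffices. The only slip is $k=1$, where Proposition~\ref{prop:pairing} does not apply and your condition reads $0<0$; there one simply notes $\mathcal C_1(M_t)=\E[M_t]=\mu(\Lambda)$.
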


\begin{proof}
The first cumulant equals $\mu(\Lambda)$.
For $i\ge2$, Eq.~\eqref{cumulantformula} gives
\[
\frac{\mathcal C_i(M_t)}{i!}
=
\sum_{p=0}^{\lfloor i/2\rfloor}
\int_0^t\int_{\Lambda^i}
F^{(i,p)}(\x,u,t)e^{-\fb U_{i,p}(\x,u)}
\mu(\dd\x)\,\dd u.
\]
 Lemma~\ref{lem:energy-multiplier}, proved below, shows that $e^{-\fb U_{i,p}(\x,u)}$ belongs to the class of test functions with majorant $Ce^{\fb(i-\kappa_{i,p})u}$, where $\kappa_{i,p}=0$ when $i=2p$ and $1$ otherwise. Apply Proposition~\ref{prop:pairing}:\[
\left|
\int_{\Lambda^i}F^{(i,p)}(\x,u,t)
e^{-\fb U_{i,p}(\x,u)}\mu(\dd\x)
\right|
\le C
\begin{cases}
e^{-(2-\fb)(i-1)u},&i\ne2p,\\
e^{-[2(i-1)-\fb i]u},&i=2p.
\end{cases}
\]

For odd $i$, the first bound applies to every $p$ and is
integrable on $[0,\infty)$.
For $i=2n$, both bounds are integrable when
$\gb^2<4-2/n$.
Dominated convergence proves the finite limits.
\end{proof}

\section{Proof of Proposition \ref{prop:pairing}}
Before beginning the proof, we will first collect a few important estimates. First, the potential $U_{i,p}$ is related to the variance of the random variable $\sum_{k=1}^{i}\la^{(p)}_k(X_u(x_k)-X_s(x_k))$. In particular, using the nonnegativity of the variance and Eq.~\eqref{log-covariance-bound}, we have the lower bound
\begin{equation}\label{energy-crude}
	U_{i,p}(\x,s,u)\ge -i(u-s)-C.
\end{equation}
Lacoin, Rhodes and Vargas~\cite[Lemma~4.3]{LacoinRhodesVargas2023} refine the bound~\eqref{energy-crude} by looking at the distance between the negative and positive charges. In the case $i=2p$ the relevant quantity is the 1-Wasserstein distance
\begin{align}\label{matching-distance}
    m(\x)=\min_{\sigma\in S_p}\sum_{k=1}^p|x_k-x_{p+\sigma(k)}|,
\end{align}
where the minimum is taken over all permutations of $p$ elements. The bounds are explicitly as follows:
\begin{theorem}\label{thm:onsager}
 For all integers $i\ge2$ and $0\le p\le\lfloor i/2\rfloor$, there exists a constant $C$ such that for all $\x\in \Lambda^i$ and $0\le s\le u$
 \begin{enumerate}
     \item For $i=2p$,
     \begin{equation}\label{energy-onsager-neutral}
     U_{i,p}(\x,s,u)\ge-(i-1)(u-s)-(-\ln m(\x)-s)_+-C.
     \end{equation}
     \item Otherwise,
     \begin{equation}\label{energy-onsager-nonneutral}
     U_{i,p}(\x,s,u)\ge-(i-1)(u-s)- C.
     \end{equation}
 \end{enumerate}
\end{theorem}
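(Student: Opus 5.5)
The proof will go through the variance of the field increment between scales $s$ and $u$. Set $G(\x)\coloneqq\sum_{k=1}^i\la^{(p)}_k\bigl(X_u(x_k)-X_s(x_k)\bigr)$. Expanding its variance gives
\[
U_{i,p}(\x,s,u)=\E[G(\x)^2]-\sum_{k=1}^i\bigl(K_u(x_k,x_k)-K_s(x_k,x_k)\bigr).
\]
By \eqref{log-covariance-bound} the diagonal terms equal $u-s+O(1)$. It therefore suffices to show
\[
\E[G(\x)^2]=\int_s^u\sum_{k,l}\la_k\la_l Q_v(x_k,x_l)\,\dd v\;\ge\;(u-s)-B(\x)-C,
\]
where $B(\x)=(-\ln m(\x)-s)_+$ in the neutral case $i=2p$ and $B(\x)=0$ otherwise. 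I do not plan to use a smeared-charge Onsager argument. Instead I will analyse the integrand scale by scale through a cluster decomposition, using only positive semidefiniteness and the decay and gradient bounds \eqref{laderive}.

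First, fix $\x$ and let $d_1,\dots,d_N$ be the distances $|x_k-x_l|$, so that $N\le i^2$. Call a scale $v$ \emph{bad} if $|v+\ln d_j|\le L$ for some $j$, where $L$ is a large fixed constant. The bad scales form at most $N$ intervals of length $2L$. On each remaining good interval $I$ the partition of the points into clusters (connected components of the graph with edges $|x_k-x_l|<e^{-v}$) is constant. Points in the same cluster are at distance $\le e^{-v-L}$, and distinct clusters are at distance $\ge e^{-v+L}$. For a cluster $c$ with representative $x_c$ and total charge $q_c=\sum_{k\in c}\la_k$, \eqref{laderive} gives $Q_v(x_k,x_l)=Q_v(x_c,x_c)+O\bigl(e^{v}\max_{k\in c}|x_k-x_c|\bigr)$ inside a cluster and $|Q_v(x_k,x_l)|\le C(e^{v}|x_k-x_l|)^{-3}$ between clusters. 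Hence on good scales
\[
\sum_{k,l}\la_k\la_lQ_v(x_k,x_l)\ge\sum_c q_c^2\,Q_v(x_c,x_c)-C\sum_j e^{-|v+\ln d_j|}.
\]
The error term integrates over $v\in\R$ to at most $CN$. On bad scales I simply use that the integrand is $\ge0$, because $Q_v$ is positive semidefinite. This costs at most $2LN$.

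Next I handle the main term. On a good interval $I=[a,b]$ with fixed clusters, \eqref{log-covariance-bound} gives $\int_a^bQ_v(x_c,x_c)\,\dd v=K_b(x_c,x_c)-K_a(x_c,x_c)\ge(b-a)-C$, and there are at most $N+1$ intervals. The remaining step is combinatorial. If $i\ne2p$, the total charge is nonzero, so at every scale some cluster has $q_c\neq0$, hence $\sum_cq_c^2\ge1$. Summing over intervals then gives $\E[G^2]\ge(u-s)-C$, which is \eqref{energy-onsager-nonneutral}. If $i=2p$, a scale can have all clusters neutral only if each cluster can be perfectly matched internally. That forces $m(\x)\le i\,e^{-v}$, that is $v\le-\ln m(\x)+C$. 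So the scales in $[s,u]$ with $\sum_cq_c^2=0$ have total length at most $(-\ln m(\x)-s)_++C$, which gives \eqref{energy-onsager-neutral}.

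I expect the main obstacle to be the error control in the cluster comparison. The gradient bound alone is too weak when a cluster contains points at very different sub-scales. This is why scales within distance $L$ of any $-\ln d_j$ are excised and the gap $e^{\mp L}$ is used. One must check that, for good $v$, every intra-cluster distance is $\le e^{-v-L}$; this holds because the points of a cluster are linked by chains of at most $i$ edges, each shorter than $e^{-v-L}$. One must also check that the errors sum to something integrable in $v$ uniformly in $\x$. A secondary point is the boundary of the domain: the bounds \eqref{laderive} hold only on $\Lambda$. Since $\Lambda$ is convex, segments between points stay in $\Lambda$, so the mean value theorem for $\nabla Q_v$ applies.
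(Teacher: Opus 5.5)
Your argument is correct, and it takes a different route from the paper. The paper gives no proof of this bound: it imports it from \cite[Lemma~4.3]{LacoinRhodesVargas2023}, which is set in the one-dimensional boundary model, and only remarks that it refines the crude bound \eqref{energy-crude} coming from nonnegativity of the variance.

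You instead give a self-contained scale-by-scale proof. You write $\E[G^2]=\int_s^u\sum_{k,l}\la_k\la_lQ_v(x_k,x_l)\,\dd v$. You discard the $O(LN)$ scales lying near some $-\ln|x_k-x_l|$, using positive semidefiniteness. On the remaining at most $N+1$ intervals you compare the integrand with $\sum_c q_c^2Q_v(x_c,x_c)$, up to an error $C\sum_je^{-|v+\ln d_j|}$ whose integral is bounded uniformly in $\x$. This makes the structure of the bound transparent. A cluster with nonzero charge contributes one unit of self-energy per scale, which produces the term $(u-s)$. In the neutral case, the only scales lost are those where every cluster is neutral, and these lie below $-\ln m(\x)+C$. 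The argument uses only positive semidefiniteness, the decay and first-derivative bounds in \eqref{laderive}, convexity of $\Lambda$, and the diagonal case of \eqref{log-covariance-bound}. It is therefore dimension-free and works directly under Assumption~\ref{ass:white-noise}, so you never have to transfer the one-dimensional lemma to the present setting.

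Two small points should be tightened. First, a chain of at most $i-1$ short edges only gives intra-cluster distances $\le(i-1)e^{-v-L}$. It is the goodness of $v$, with $e^{2L}>i$, that upgrades this to $<e^{-v-L}$. Second, on short good intervals you should not write $\sum_cq_c^2\bigl((b-a)-C\bigr)$. Instead use $Q_v(x,x)\ge0$ to keep a single charged cluster $c_0$: $\sum_cq_c^2\int_IQ_v(x_c,x_c)\,\dd v\ge\int_IQ_v(x_{c_0},x_{c_0})\,\dd v\ge|I|-C$.
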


We will use the following estimates repeatedly in the remainder of the paper; their proof is given in Appendix~\ref{app:Q-estimates}.

\begin{lemma}\label{lem:Q-estimates}
Let $i,j\ge1$, $0\le p\le\lfloor i/2\rfloor$ and
$0\le q\le\lfloor j/2\rfloor$ be integers. There exists a constant $C$
such that, for all $\x_1=(x_1,\ldots,x_i)\in \Lambda^i$,
$\x_2=(x_{i+1},\ldots,x_{i+j})\in \Lambda^j$ and $u\ge0$,
\begin{equation}\label{Q-basic}
 |\mathcal{Q}_{i,j,p,q}(\x_1,\x_2,u)|\le C\sum_{k=1}^i\sum_{l=i+1}^{i+j}\frac{1}{(1+e^u|x_k-x_l|)^{3}}\eqqcolon G_{i,j}(\x_1,\x_2,u).
\end{equation}
If $i=2p$, then
\begin{equation}\label{Q-one-neutral}
	|\mathcal{Q}_{2p,j,p,q}(\x_1,\x_2,u)|\le [1\wedge m(\x_1)e^u]G_{2p,j}(\x_1,\x_2,u).
\end{equation}
A similar bound holds when $j=2q$, with the factor
$[1\wedge m(\x_2)e^u]$. If both $i=2p$ and $j=2q$, then
\begin{equation}\label{Q-two-neutral}
	|\mathcal{Q}_{2p,2q,p,q}(\x_1,\x_2,u)|\le [1\wedge m(\x_1)e^u][1\wedge m(\x_2)e^u]G_{2p,2q}(\x_1,\x_2,u).
\end{equation}
\end{lemma}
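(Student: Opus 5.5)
The plan is to reduce every estimate to single-pair bounds on $Q_u$ and its derivatives from Assumption~\ref{ass:white-noise}, and then to produce the neutrality factors $[1\wedge m(\cdot)e^u]$ by pairing opposite charges along an optimal matching. Convexity of $\Lambda$ is what allows the mean value theorem along segments. Throughout, the constant $C$ in the definition of $G_{i,j}$ may be enlarged, so that it absorbs all combinatorial constants.

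\emph{Basic bound \eqref{Q-basic}.} This is immediate. Since $|\la^{(p)}_k\la^{(q)}_{l-i}|=1$, the triangle inequality and the first bound in \eqref{laderive} give $|\mathcal Q_{i,j,p,q}|\le\sum_{k,l}|Q_u(x_k,x_l)|\le C\sum_{k,l}(1+e^u|x_k-x_l|)^{-3}$.

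\emph{One neutral factor \eqref{Q-one-neutral}.} Let $i=2p$ and let $\sigma\in S_p$ attain the minimum in \eqref{matching-distance}. Regrouping the sum over $k$ in $\mathcal Q_{2p,j,p,q}$ gives
\[
\mathcal Q_{2p,j,p,q}=\sum_{l}\la^{(q)}_{l-i}\sum_{k=1}^{p}\bigl[Q_u(x_{p+\sigma(k)},x_l)-Q_u(x_k,x_l)\bigr].
\]
Each bracket is controlled in two regimes, according to the size of $d\coloneqq|x_k-x_{p+\sigma(k)}|$.
\begin{itemize}
\item If $e^ud\ge1$, bound the bracket by the sum of the two terms. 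Each term appears in $G_{2p,j}$, and $1\wedge e^ud=1$.
\item If $e^ud<1$, apply the mean value theorem on the segment $[x_k,x_{p+\sigma(k)}]\subset\Lambda$ together with the gradient bound in \eqref{laderive}. This gives $|\text{bracket}|\le Ce^ud\,\sup_{z}(1+e^u|z-x_l|)^{-3}$. Every $z$ on the segment satisfies $e^u|z-x_l|\ge e^u|x_k-x_l|-1$, hence $1+e^u|z-x_l|\ge\frac12(1+e^u|x_k-x_l|)$ whenever $e^u|x_k-x_l|\ge1$; if $e^u|x_k-x_l|<1$ the comparison is trivial. So the supremum is at most $8(1+e^u|x_k-x_l|)^{-3}$.
\end{itemize}
In both regimes the bracket is at most $C[1\wedge e^ud]$ times terms of $G_{2p,j}$. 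Finally $1\wedge e^ud\le1\wedge e^um(\x_1)$, because $d\le m(\x_1)$ for a minimizing $\sigma$. The case $j=2q$ is symmetric.

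\emph{Two neutral factors \eqref{Q-two-neutral}.} Use optimal matchings $\sigma$ for $\x_1$ and $\sigma'$ for $\x_2$. The sum then becomes a sum of double differences
\[
\Delta=Q_u(a',b')-Q_u(a,b')-Q_u(a',b)+Q_u(a,b),
\]
where $a,a'$ is a matched pair in $\x_1$ and $b,b'$ is a matched pair in $\x_2$. Write $d_1=|a-a'|$ and $d_2=|b-b'|$. The argument splits into three cases.
\begin{itemize}
\item If both $e^ud_1,e^ud_2\ge1$, use the four-term triangle bound.
\item If exactly one is small, say $e^ud_1<1$, group $\Delta$ as two single differences in the first variable. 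Then reuse the previous step, which gives the factor $e^ud_1$.
\item If both are small, write $\Delta=\int_0^1\!\int_0^1 (a'-a)^{\!\top}\nabla_x\nabla_yQ_u(z_s,w_r)(b'-b)\,\dd s\,\dd r$ along the two segments. The mixed-derivative bound in \eqref{laderive} gives a factor $e^{2u}d_1d_2$. The same shifting argument, now with $e^u|z-w|\ge e^u|a-b|-2$, compares the envelope at $(z,w)$ with $(1+e^u|a-b|)^{-3}$ up to a constant ($3^3$).
\end{itemize}
The main (mild) obstacle is this envelope comparison under shifts of both endpoints. It is handled by the dichotomy $e^u|a-b|\ge 4$ versus $<4$, since in the latter case the envelope is bounded below by a constant and the comparison is trivial. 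Summing over matched pairs and bounding $d_1\le m(\x_1)$ and $d_2\le m(\x_2)$ gives \eqref{Q-two-neutral}.
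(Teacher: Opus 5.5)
Your proof is correct and follows essentially the paper's argument. You regroup $\mathcal Q$ along optimal matchings, apply the mean value theorem with the first-derivative and mixed-derivative bounds of Assumption~\ref{ass:white-noise} along segments in the convex set $\Lambda$, and compare the shifted envelope with $(1+e^u|x_k-x_l|)^{-3}$.

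The only differences are cosmetic. You split into regimes pair by pair according to $e^u d$, whereas the paper splits once on $m(\x_1)<e^{-u}$ (respectively on both matching distances) and otherwise falls back on \eqref{Q-basic} or \eqref{Q-one-neutral}. You also write out the envelope comparison that the paper only asserts.
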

Lastly, before proving Proposition~\ref{prop:pairing}, we show the following properties for our class of test functions:
\begin{lemma}\label{lem:general-energy-multiplier}\label{lem:energy-multiplier}
Fix $\gb^2<4$, $i\ge2$ and $0\le p\le\lfloor i/2\rfloor$, and let $\x\in \Lambda^i$. Let $\phi\in\mathcal{D}^\gamma_{i,p}(\Lambda^i\times\R_+)$ with majorant $F_i(\x,u)$. For fixed $u\ge0$, define $\psi(\x,t)\coloneqq e^{-\fb U_{i,p}(\x,u,t)}\phi(\x,u)$ for $t\ge u$, and set $\psi(\x,t)=0$ for $0\le t<u$. Then
\begin{enumerate}
	\item $\psi\in \mathcal{D}^\gamma_{i,p}$ with majorant $C e^{\fb(i-\kappa_{i,p})(t-u)}F_i(\x,u)$, where $\kappa_{i,p}=0$ if $i=2p$ and $1$ otherwise,
	\item if $i=2p$ and 
	\begin{align}\left|D_M\phi(\x,u)\right|
&\le F_{2p}(\x,u)\chi_M(\x,u)
\prod_{(m_1,m_2)\in M}\bigl[1\wedge(e^u|x_{m_1}-x_{m_2}|)\bigr]^\gamma, \label{unweighted-property}
 \\
\chi_M(\x,u)&=\begin{cases}
1,& |M|\ge2,\\
\bigl([1\wedge m(\x)e^u]+[1\wedge|x_{m_1}-x_{m_2}|e^u]\bigr)^\gamma,& M=\{m\},\\
[1\wedge m(\x)e^u]^{2\gamma},&M=\emptyset,
\end{cases}\label{chi-definition}
\end{align}
then $\psi\in\mathcal{D}_{i,p}^\gamma$ with majorant $Ce^{\fb(i-1)(t-u)}F_i(\x,u)$.
\end{enumerate}
\end{lemma}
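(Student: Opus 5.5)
The plan is to check the three conditions of Definition~\ref{TestDef} for $\psi$ one at a time, writing $E(\x)\coloneqq e^{-\fb U_{i,p}(\x,u,t)}$ so that $\psi=E\,\phi(\cdot,u)$.

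\emph{Symmetry and measurability.} Condition~\eqref{test:symmetry} holds because $U_{i,p}(\x,u,t)=2\sum_{k<l}\la^{(p)}_k\la^{(p)}_l(K_t-K_u)(x_k,x_l)$ is invariant under permutations of the first $p$ coordinates and, separately, of the last $i-p$ coordinates. Measurability is clear.

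\emph{Leibniz reduction.} For the bound~\eqref{testprop} I would use the Leibniz rule for disjoint swaps from Appendix~\ref{app:swap-algebra}:
\[
D_M(E\phi)=\sum_{B\subseteq M}(\tau_{M\setminus B}D_B E)\,(D_{M\setminus B}\phi).
\]
The estimate then splits into a bound on $D_{M\setminus B}\phi$, which is the hypothesis~\eqref{testprop} or~\eqref{unweighted-property}, and a bound on $D_BE$ of the form
\begin{equation*}
|D_BE(\x)|\le C e^{\fb(i-\kappa_{i,p})(t-u)}\prod_{(b_1,b_2)\in B}\bigl[1\wedge e^u|x_{b_1}-x_{b_2}|\bigr]^\gamma .
\end{equation*}
Swaps in $M$ joining two charges of the same sign leave $E$ invariant, so $D_bE=0$ for those $b$. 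Only opposite-charge pairs remain.

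\emph{Bound on $D_BE$.} The case $B=\emptyset$ is the sup bound on $E$. In the non-neutral case it is Theorem~\ref{thm:onsager}~\eqref{energy-onsager-nonneutral}. In the neutral case the crude bound~\eqref{energy-crude} gives exponent $i$.

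For $B\neq\emptyset$, take a pair $b=(b_1,b_2)$ with $d=|x_{b_1}-x_{b_2}|$. If $e^ud\ge1$, the weight equals $1$ and I bound $|D_bE|\le E+\tau_bE$ by the sup bound. If $e^ud<1$, I would write
\[
E-\tau_bE=E\bigl(1-e^{-\fb(\tau_bU-U)}\bigr)
\]
and control $\tau_bU-U$ through differences $K_{u,t}(x_{b_1},x_k)-K_{u,t}(x_{b_2},x_k)$. Using the gradient bound in~\eqref{laderive}, each such difference is at most
\[
C\int_u^t\bigl(1\wedge e^rd\bigr)\,\dd r .
\]
I would then interpolate with $|1-e^{-a}|\le C|a|^\gamma(1+e^{-a})$. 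Several swaps are handled by iterating this one swap at a time.

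\emph{Main obstacle.} A naive bound here gives $(e^td)^\gamma=e^{\gamma(t-u)}(e^ud)^\gamma$, which loses a factor $e^{\gamma(t-u)}$. To absorb it I would prove a refined energy lower bound on configurations containing an opposite-charge pair at distance $d<e^{-u}$. On scales $r\in[u,-\ln d]$ the two charges behave as separate charges, but on $r\in[-\ln d,t]$ they behave as a neutral dipole. Repeating the Onsager argument of Theorem~\ref{thm:onsager} with this merged pair (induction on $i$) should give
\[
U_{i,p}(\x,u,t)\ge-(i-\kappa_{i,p})(t-u)+c\,(t+\ln d)_+-C .
\]
The gain $c\,(t+\ln d)_+$ is what compensates the factor $(e^td)^\gamma$ against $(e^ud)^\gamma$. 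This uses $\gamma\le1$, and for $\gamma>\gb^2/4$ it also requires a short check of the exponents. I expect this refined bound to be the hardest step.

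\emph{Part (2).} When $i=2p$ the target exponent drops from $i$ to $i-1$. Here I would use~\eqref{energy-onsager-neutral}:
\[
e^{-\fb U}\le Ce^{\fb(i-1)(t-u)}\bigl(1\vee (e^um(\x))^{-1}\bigr)^{\gb^2/2}.
\]
The excess factor $(e^um)^{-\gb^2/2}$ should be cancelled by the extra weight $\chi_M$ in~\eqref{chi-definition}. For $B=M=\emptyset$ the factor $[1\wedge me^u]^{2\gamma}$ beats $(me^u)^{-\gb^2/2}$ because $2\gamma>\gb^2/2$. For $|M|=1$ I would use $m(\x)\le|x_{m_1}-x_{m_2}|+m(\tau_m\x)$, i.e.\ the triangle inequality for the matching distance. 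For $|B|\ge1$ the swap factors supply this cancellation, with a case split on whether $m(\x)$ or the swapped pair distance dominates.
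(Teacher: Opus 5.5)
There is a genuine gap, and it comes from the scale used in the weight. In Definition~\ref{TestDef} the weight is taken at the function's own time variable. So $\psi\in\mathcal D^\gamma_{i,p}$ means
\[
|D_M\psi(\x,t)|\le Ce^{\fb(i-\kappa_{i,p})(t-u)}F_i(\x,u)\prod_{(m_1,m_2)\in M}\bigl[1\wedge e^t|x_{m_1}-x_{m_2}|\bigr]^\gamma ,
\]
with $e^t$, not $e^u$. This is how the lemma is used in the proof of Proposition~\ref{prop:pairing}, where the induction hypothesis is applied at the later time $h$.

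Your target bound on $D_BE$, with weights $[1\wedge e^ud]^\gamma$, is false. So is the $e^u$-weighted version of the lemma itself (take $\phi\equiv1$). Here is a counterexample. Take $i=3$, $p=1$, put $x_3$ within $e^{-t}$ of $x_1$, and put $x_2$ at distance $d=e^{-(t+u)/2}$ from $x_1$. By~\eqref{log-covariance-bound}, $U_{3,1}(\x,u,t)=-2(t-u)+O(1)$, while $U_{3,1}(\tau_{(1,2)}\x,u,t)=O(1)$. Hence $|D_{(1,2)}E|\asymp e^{\gb^2(t-u)}$, whereas your bound gives $Ce^{\gb^2(t-u)}(e^ud)^\gamma=Ce^{(\gb^2-\gamma/2)(t-u)}$.

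The same configuration refutes your refined energy bound: there $(t+\ln d)_+=(t-u)/2$, but $U$ shows no gain over $-2(t-u)$. The mechanism could not work in any case. The loss $e^{\gamma(t-u)}$ you want to absorb is already present when $e^td<1$, and there $(t+\ln d)_+=0$. Your scale picture is also reversed. At the coarse scales $r<-\ln d$ the pair is unresolved and acts as a neutral dipole; only at $r>-\ln d$ are the two charges separate.

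With the correct weight, your main obstacle disappears, and this is the paper's route. Write $d_a\coloneqq|x_{a_1}-x_{a_2}|$. By~\eqref{app:partial-subset-expansion} you may assume $e^td_a<1$ for all $a\in M$, since pairs with $e^td_a\ge1$ carry weight $1$. Because $U_{i,p}$ is a sum of pair terms, $|D_aU|\le Ce^td_a$, $|D_{\{a,b\}}U|\le Ce^{2t}d_ad_b$ and $D_{\{a,b,c\}}U=0$. These are exactly the inputs your ``one swap at a time'' iteration needs. They give $|D_AE|\le CE\prod_{a\in A}e^td_a\le CE\prod_{a\in A}(e^td_a)^\gamma$. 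The $\phi$-weights satisfy $(1\wedge e^ud)^\gamma\le(1\wedge e^td)^\gamma$, and \eqref{energy-crude} and~\eqref{energy-onsager-nonneutral} bound $\tau_{A^c}E$. This proves item~1.

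For item~2 your plan misses an ingredient for the same reason. Each $\phi$-weight converts as $(e^ud)^\gamma=e^{-\gamma(t-u)}(e^td)^\gamma$. The resulting gain $e^{-\gamma|A|(t-u)}$, for the pairs $A$ whose difference falls on $\phi$, enters~\eqref{energy-compensation} together with $\chi_A$. It is indispensable when $|A|\ge2$, where $\chi_A=1$. For instance, if $m(\x)\lesssim e^{-t}$, the best bound on $e^{-\fb U(\tau_A\x,u,t)}$ is the crude one, $Ce^{\fb i(t-u)}$. Only the factor $e^{-2\gamma(t-u)}$, with $2\gamma>\fb$, brings it down to $e^{\fb(i-1)(t-u)}$. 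The weight $\chi_M$ alone cannot do this.
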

\begin{proof}
We start from the first item: $U_{i,p}$ satisfies the symmetry condition in item~\ref{test:symmetry} of Definition~\ref{TestDef}. Then, we need to show that for all $t\ge u$
\begin{equation}\label{general-weighted-test}
\left|D_M\left[e^{-\fb U_{i,p}(\x,u,t)}\phi(\x,u)\right]\right|
\le C e^{\fb(i-\kappa_{i,p})(t-u)}F_i(\x,u)
\prod_{(m_1,m_2)\in M}\bigl[1\wedge(e^t|x_{m_1}-x_{m_2}|)\bigr]^\gamma.
\end{equation}
Set $A\coloneqq\{m\in M:|x_{m_1}-x_{m_2}|< e^{-t}\}$. The expansion~\eqref{app:partial-subset-expansion} gives
\begin{equation}\label{swap-expansion}
D_M
\left[e^{-\fb U_{i,p}(\x,u,t)}\phi(\x,u)\right]
=\sum_{B\subseteq M\setminus A}(-1)^{|B|}
\left.\left\{D_A
\left[e^{-\fb U_{i,p}(\x,u,t)}\phi(\x,u)\right]\right\}\right|_{\x=\y_B},
\end{equation}
where $\y_B=\tau_B\x$. Therefore it suffices to consider the case $M=A$. Using~\eqref{app:product-rule-second} on the left-hand side of Eq.~\eqref{swap-expansion} gives
\begin{equation}\label{energy-Leibniz}
\sum_{A\subseteq M}
\left[\tau_{A^c}D_Ae^{-\fb U_{i,p}(\x,u,t)}\right]
\left[D_{A^c}\phi(\x,u)\right].
\end{equation}
Here $A^c=M\setminus A$. We aim to prove $\left|D_Ae^{-\fb U_{i,p}(\x,u,t)}\right|\le C e^{-\fb U_{i,p}(\x,u,t)}\prod_{a\in A}|x_{a_1}-x_{a_2}|e^t$, since then the absolute value of Eq.~\eqref{energy-Leibniz} is bounded by
\begin{equation}\label{midpoint}
	C\sum_{A\subseteq M}\left(\tau_{A^c}e^{-\fb U_{i,p}(\x,u,t)}\right)\left(\prod_{a\in A}|x_{a_1}-x_{a_2}|e^t\right)\left|D_{A^c}\phi(\x,u)\right|
\end{equation}
and Eq.~\eqref{general-weighted-test} then follows by using Eqs.~\eqref{energy-crude} and~\eqref{energy-onsager-nonneutral} and $\phi\in\mathcal{D}^\gamma_{i,p}$. If $A=\emptyset $ the result is immediate. Otherwise, choose $\theta\in A$. By induction on $|A|$, using Eq.~\eqref{app:product-rule-first},
\begin{multline}\label{energy-difference-induction}
	D_Ae^{-\fb U_{i,p}(\x,u,t)}=D_{A\setminus\{\theta\}}\left(e^{-\fb U_{i,p}(\x,u,t)}\left[1-e^{\fb D_{\theta}U_{i,p}(\x,u,t)}\right]\right)\\
	=\sum_{B\subseteq A\setminus\{\theta\}}\left(D_B e^{-\fb U_{i,p}(\x,u,t)}\right)\left(\tau_B D_{(A\setminus\{\theta\})\setminus B}\left[1-e^{\fb D_{\theta}U_{i,p}(\x,u,t)}\right]\right).
\end{multline}
The first factor in the sum is controlled by the induction hypothesis. For the second, we observe the following: by expanding the definition of $U_{i,p}$ and applying Eq.~\eqref{laderive} we obtain for pairwise disjoint pairs $a=(a_1,a_2),b=(b_1,b_2)$ and $c=(c_1,c_2)$
\begin{align}
	|D_aU_{i,p}(\x,u,t)|&\le C|x_{a_1}-x_{a_2}|e^t,\label{energy-first-difference}\\
	|D_{\{a,b\}}U_{i,p}(\x,u,t)|&\le C|x_{a_1}-x_{a_2}||x_{b_1}-x_{b_2}|e^{2t},\label{energy-second-difference}\\
D_{\{a,b,c\}}U_{i,p}&=0.\label{energy-third-difference}
\end{align}
Eq.~\eqref{energy-first-difference} proves the case $|A|=1$:
\[
\left|D_ae^{-\fb U_{i,p}}\right|\le \fb |U_{i,p}-\tau_a U_{i,p}| e^{-\fb\min\left(U_{i,p},\tau_a U_{i,p}\right)}\le Ce^{-\fb U_{i,p}(\x,u,t)}|x_{a_1}-x_{a_2}|e^t,
\]
where in the last inequality we applied Eq.~\eqref{energy-first-difference} to both the first factor and the exponential. Next, for $|A|\ge 2$ by direct computation
\begin{equation*}
	D_b\left(1-e^{\fb D_{\theta}U_{i,p}}\right)=-e^{\fb D_{\theta}U_{i,p}}\left(1-e^{-\fb D_{\{\theta,b\}}U_{i,p}}\right).
\end{equation*}
The term in the bracket is invariant under a third swap $\tau_c$ per Eq.~\eqref{energy-third-difference}. Thus, by induction for all nonempty $C\subseteq A\setminus\{\theta\}$
\begin{equation*}
	D_C\left(1-e^{\fb D_{\theta}U_{i,p}}\right)=-e^{\fb D_{\theta}U_{i,p}}\prod_{c\in C}\left(1-e^{-\fb D_{\{\theta,c\}}U_{i,p}}\right).
\end{equation*}
For $C=\emptyset$, Eq.~\eqref{energy-first-difference} gives $|1-e^{\fb D_\theta U_{i,p}}|\le C|x_{\theta_1}-x_{\theta_2}|e^t$. For the terms in the product
\begin{equation*}
	\left|1-e^{-\fb D_{\{\theta,c\}}U_{i,p}}\right|\le \fb |D_{\{\theta,c\}}U_{i,p}|e^{\fb\left|D_{\{\theta,c\}}U_{i,p}\right|}\le C|x_{\theta_1}-x_{\theta_2}||x_{c_1}-x_{c_2}|e^{2t},
\end{equation*}
where in the last inequality we used Eq.~\eqref{energy-second-difference}. Since $|x_{\theta_1}-x_{\theta_2}|e^t<1$, its $|C|$th power is at most its first power for nonempty $C$. These estimates also hold after $\tau_B$, since the swaps are disjoint. Plugging them into Eq.~\eqref{energy-difference-induction} proves the induction.

Turning to the second item, we again assume $|x_{a_1}-x_{a_2}|e^t<1$ for all $a\in M$, and it then suffices to show that
\begin{equation}\label{neutral-weighted-test}
	|D_{M}\psi(\x,t)|\le CF_{2p}(\x,u)e^{\fb(2p-1)(t-u)}\prod_{m\in M}(|x_{m_1}-x_{m_2}|e^t)^\gamma.
\end{equation}
For $M=\emptyset$, Eq.~\eqref{energy-onsager-neutral} gives the bound for $m(\x)>0$
\[
 C F_{2p}(\x,u)e^{\fb(i-1)(t-u)}[1\wedge e^um(\x)]^{2\gamma-\fb}
 \le C F_{2p}(\x,u)e^{\fb(i-1)(t-u)};
\]
when $m(\x)=0$, Eq.~\eqref{unweighted-property} implies $\phi(\x,u)=0$. For $|M|\ge1$, we use Eq.~\eqref{midpoint} with Eq.~\eqref{unweighted-property}:
\begin{equation*}
	|D_M\psi(\x,t)|\le CF_{2p}(\x,u)\left(\sum_{A\subseteq M}\chi_{A^c}(\x,u)\tau_{A^c}\left(e^{-\fb U_{i,p}(\x,u,t)}\right)e^{-\gamma|A^c|(t-u)}\right)\prod_{m\in M}(|x_{m_1}-x_{m_2}|e^t)^\gamma.
\end{equation*}
To show Eq.~\eqref{neutral-weighted-test} it suffices to show that
\begin{equation}\label{energy-compensation}
 \chi_A(\x,u)e^{-\fb U_{i,p}(\tau_A \x,u,t)}e^{-\gamma|A|(t-u)}\le C e^{\fb(2p-1)(t-u)}
 \end{equation}
 for all $A\subseteq M$. To this end write $\y_A\coloneqq\tau_A\x=(y_1,...,y_{2p})$, and let $\sigma_\x$ minimize the expression in Eq.~\eqref{matching-distance} for $m(\x)$. Then
\begin{equation}
m(\y_A)\le\sum_{k=1}^p\bigl(|x_k-x_{p+\sigma_\x(k)}|+|y_k-x_k|+|y_{p+\sigma_\x(k)}-x_{p+\sigma_\x(k)}|\bigr)=m(\x)+\sum_{k=1}^i|y_k-x_k|.\label{matching-Lipschitz}
\end{equation}
The same argument applies with $\y_A$ and $\x$ interchanged. Only coordinates moved by $\tau_{A}$ contribute, so
\begin{equation}\label{short-matching-distance}
|m(\x)-m(\y_A)|\le\sum_{k=1}^i|y_k-x_k|
=2\sum_{m\in A}|x_{m_1}-x_{m_2}|\le2|M|e^{-t}.
\end{equation}
If $m(\x)\ge2m(\y_A)$, Eq.~\eqref{short-matching-distance} gives $m(\x)\le4|M|e^{-t}$; hence Eq.~\eqref{chi-definition} gives
\begin{equation}\label{chi-short-bound}
\chi_{A}(\x,u)\le C\begin{cases}
1,&|A|\ge2,\\
e^{-\gamma(t-u)},&|A|=1,\\
e^{-2\gamma(t-u)},&A=\emptyset.
\end{cases}
\end{equation}
Applying Eqs.~\eqref{energy-crude} and~\eqref{chi-short-bound}, we obtain
\begin{equation}\label{energy-small-matching}
e^{-\fb U_{i,p}(\y_A,u,t)}\chi_{A}(\x,u)e^{-\gamma|A|(t-u)}
\le Ce^{\fb i(t-u)}e^{-2\gamma(t-u)}.
\end{equation}
Since $2\gamma>\fb$, we prove Eq.~\eqref{energy-compensation} in this case.
If $m(\x)<2m(\y_A)$, then $1\wedge m(\x)e^u\le2(1\wedge m(\y_A)e^u)$.
When $m(\x)\ge e^{-t}$, Eqs.~\eqref{energy-onsager-neutral} and~\eqref{chi-definition} give
\begin{equation}\label{energy-comparable-matching}
\begin{split}
&e^{-\fb U_{i,p}(\y_A,u,t)}\chi_{A}(\x,u)e^{-\gamma|A|(t-u)}\\
&\quad\le Ce^{\fb(i-1)(t-u)}\begin{cases}
[1\wedge m(\x)e^u]^{-\fb}e^{-2\gamma(t-u)},&|A|\ge2,\\
e^{-\gamma(t-u)}\bigl([1\wedge m(\x)e^u]^{\gamma-\fb}
+e^{-\gamma(t-u)}[1\wedge m(\x)e^u]^{-\fb}\bigr),&|A|=1,\\
[1\wedge m(\x)e^u]^{2\gamma-\fb},&A=\emptyset.
\end{cases}
\end{split}
\end{equation}
In Eq.~\eqref{energy-comparable-matching}, for $A=\emptyset$, use $2\gamma>\fb$. For $|A|\ge2$ and the second summand when $|A|=1$, use $m(\x)\ge e^{-t}$. For the first summand in the case $|A|=1$, use the same inequality if $\fb>\gamma$; if $\fb\le\gamma$, the factor with exponent $\gamma-\fb$ is at most $1$.
When $m(\x)<e^{-t}$, use Eq.~\eqref{energy-crude} as in the derivation of Eq.~\eqref{energy-small-matching}.
\end{proof}

\begin{proof}[Proof of Proposition~\ref{prop:pairing}]
We start by proving Eq.~\eqref{propz1}. For $i=2$ and $p=0,1$, the left-hand side of Eq.~\eqref{propz1} is bounded, up to a fixed coefficient, by
\begin{equation*}
 \int_{\Lambda^2}|Q_u(x_1,x_2)|\left|\phi_2((x_1,x_2),u)\right|\mu(\dd\x)\le 2\int_\Lambda f(x,u)\left(\int_\Lambda|Q_u(x,y)|\mu(\dd y)\right)\mu(\dd x)
 \le Ce^{-2u}\int_\Lambda f(x,u)\mu(\dd x),
\end{equation*}
the first inequality coming from applying Eq.~\eqref{testprop} to $\phi_2$ with $M=\emptyset$, the last from Eq.~\eqref{laderive} and boundedness of the density of $\mu$.

Next, we prove the bound for each contribution in Eqs.~\eqref{ijodd}--\eqref{ijeven}; this proves it for every $F^{(i,j,p)}$ and hence for $F^{(i,p)}$. We consider the hardest case, Eq.~\eqref{ijeven}, with $i=2p$, $j=2q$ and $r=p+q$; the other cases follow the same strategy. Since $U_{i,p}$ is invariant under $T_1$, and $U_{j,q}$ under $T_2$, the contribution to be bounded, up to the fixed coefficient $-\gb^2/4$, is
\begin{multline}\label{neutral-child-product}
  \int_{\Lambda^{i+j}}\bigg[\mathcal{Q}_{i,j,p,q}(\bx_1,\bx_2,s)  \left(\int^t_s (F^{(i,p)}(\mathbf{x}_1,u,t)+F^{(i,p)}(T_1\mathbf{x}_1,u,t)) e^{ - \fb  U_{i,p}(\bx_1,s,u)} \dd u\right)\\
 \times \left(\int^t_s (F^{(j,q)}(\mathbf{x}_2,v,t)+F^{(j,q)}(T_2 \mathbf{x}_2,v,t))  e^{-\fb  U_{j,q}(\bx_2,s,v)}\dd v \right)\bigg]\phi_{i+j}(T^+\x,s)\mu(\dd\x),
\end{multline}
where $T^+$ is the permutation coming from Eqs.~\eqref{relabel-minus} and~\eqref{relabel-plus}. After expanding the time integrals in Eq.~\eqref{neutral-child-product}, the absolute value of the spatial integral at fixed $s\le v,h\le t$ can be written as 
\begin{multline*}
 \Bigg|\int_{\Lambda^i}F^{(i,p)}(\x_1,v,t)e^{-\fb U_{i,p}(\x_1,s,v)}
 \int_{\Lambda^j}F^{(j,q)}(\x_2,h,t)e^{-\fb U_{j,q}(\x_2,s,h)}
 \mathcal Q_{i,j,p,q}(\x_1,\x_2,s)\\
 \times(1-T_1)(1-T_2)\phi_{i+j}(T^+\x,s)
 \mu(\dd\x_2)\mu(\dd\x_1)\Bigg|.
\end{multline*}
Define
\begin{align*}
 \Phi(\x,s)&\coloneqq(1-T_1)(1-T_2)\phi_{i+j}(T^+\x,s).
\end{align*}
We will furthermore split
\[
 F_1(\x,s)=\sum_{k=1}^i f(x_k,s),\qquad
 F_2(\x,s)=\sum_{l=i+1}^{i+j}f(x_l,s),\qquad
 \Phi_a=\frac{F_a}{F_1+F_2}\Phi\quad(a=1,2),
\]
and on $\{F_1+F_2=0\}$, where $\Phi=0$, we set $\Phi_1=\Phi_2=0$. Write $F\coloneqq F_1+F_2$. We will prove that, for all $M\in\Omega_{\x_1}$ and $N\in\Omega_{\x_2}$,
\begin{multline}\label{jgoal-split}
 \left|D_{M\cup N}
 [\mathcal Q(\x_1,\x_2,s)\Phi_a(\x,s)]\right|\le CF_a(\x_a,s)G_{i,j}(\x_1,\x_2,s)
 \chi_M(\x_1,s)\chi_N(\x_2,s)\\
 \times\prod_{(m_1,m_2)\in M}[1\wedge e^s|x_{m_1}-x_{m_2}|]^\gamma
 \prod_{(b_1,b_2)\in N}[1\wedge e^s|x_{b_1}-x_{b_2}|]^\gamma.
\end{multline}
Then by Lemma~\ref{lem:energy-multiplier}, for fixed $s$ and $\x_1$, $(\x_2,h)\mapsto D_M\left(e^{-\fb U_{j,q}(\x_2,s,h)}\mathcal{Q}(\x_1,\x_2,s)\Phi_1(\x,s)\right)$ belongs to $\mathcal{D}_{j,q}^\gamma(\Lambda^j\times\R_+)$ with majorant $C_M(\x_1,s)G(\x_2,s)e^{\fb(j-1)(h-s)}$, where $$C_M(\x_1,s)=CF_1(\x_1,s)\chi_M(\x_1,s)\prod_{(m_1,m_2)\in M}[1\wedge e^s|x_{m_1}-x_{m_2}|]^\gamma,$$ $G(\x_2)=\sum_{k=i+1}^{i+j}g(x_k)$ and $g(x_k)=\sum_{l=1}^i(1+e^s|x_l-x_k|)^{-3}$. Since $\int_{\Lambda}g(x)\mu(\dd x)\le Ce^{-2s}$, by the induction hypothesis on $j$, we have
\begin{multline*}
	\left|D_M\left(\int_{\Lambda^j}F^{(j,q)}(\x_2,h,t)e^{-\fb U_{j,q}(\x_2,s,h)}\mathcal{Q}(\x_1,\x_2,s)\Phi_1(\x,s)\mu(\dd\x_2)\right)\right|\\
	\le C C_M(\x_1,s) e^{-2s}e^{\fb(j-1)(h-s)}e^{-2(j-1)h}.
\end{multline*}
Similarly applying Lemma~\ref{lem:energy-multiplier} to the integral in the brackets and applying the induction hypothesis on $i$, we have
\begin{multline}\label{child-time-bound}
	\left|\int_{\Lambda^i}F^{(i,p)}(\x_1,v,t)e^{-\fb U_{i,p}(\x_1,s,v)}\int_{\Lambda^j}F^{(j,q)}(\x_2,h,t)e^{-\fb U_{j,q}(\x_2,s,h)}\mathcal{Q}(\x_1,\x_2,s)\Phi_1(\x,s)\mu(\dd\x)\right|\\
	\le Ce^{-2s}e^{\fb(i-1)(v-s)}e^{-2(i-1)v}e^{\fb(j-1)(h-s)}e^{-2(j-1)h}\int_{\Lambda}f(x,s)\mu(\dd x).
\end{multline}
Integrating $v$ and $h$ over $[s,t]$ produces Eq.~\eqref{propz1} in the case $a=1$. In the case $a=2$, we instead start from the function $(\x_1,v)\mapsto D_N\left(e^{-\fb U_{i,p}(\x_1,s,v)}\mathcal{Q}(\x_1,\x_2,s)\Phi_2(\x,s)\right)$, apply the induction hypothesis on $i$ first, and then $j$ to obtain Eq.~\eqref{propz1}. Adding both cases of $a$ together proves Eq.~\eqref{propz1} in the case $i=2p$ and $j=2q$, provided Eq.~\eqref{jgoal-split} holds.

To that end, it suffices to prove
\begin{multline}\label{jgoal}
 \left|D_{M\cup N}
 [\mathcal Q(\x_1,\x_2,s)\Phi(\x,s)]\right|\le CF(\x,s)G_{i,j}(\x_1,\x_2,s)
 \chi_M(\x_1,s)\chi_N(\x_2,s)\\
 \times\prod_{(m_1,m_2)\in M}[1\wedge e^s|x_{m_1}-x_{m_2}|]^\gamma
 \prod_{(b_1,b_2)\in N}[1\wedge e^s|x_{b_1}-x_{b_2}|]^\gamma,
\end{multline}
since by multiplying both sides of Eq.~\eqref{jgoal} by $F_a/F$ we get Eq.~\eqref{jgoal-split}. Furthermore, like in Lemma~\ref{lem:energy-multiplier}, we can assume $|x_{m_1}-x_{m_2}|<e^{-s}$ for all $(m_1,m_2)\in M$ and similarly for $N$. In the following, we write $\sigma_{1}$ and $\sigma_{2}$ for the permutations associated to $m(\x_1)$ and $m(\x_2)$, respectively. Define $T_1(\sigma_1)=\prod_{k=1}^p\tau_{(k,p+\sigma_1(k))}$ and similarly for $T_2(\sigma_2)$. We have the identity
\begin{equation}\label{newphi}
\Phi(\x,s)=(1-T_2(\sigma_2))(1-T_1(\sigma_1))\left[\phi_{i+j}(T^+\x,s)\right]
\end{equation}
due to the symmetry of $\phi_{i+j}$ (item~\ref{test:symmetry} of Definition~\ref{TestDef}). When estimating a finite difference at $\x$, we choose matchings optimal at $\x$ and keep them fixed throughout that calculation. Thus, in the following we replace $T_1$ by $T_1(\sigma_1)$ and similarly for $T_2$. By telescoping
\begin{equation}\label{charge-reversal-telescoping}
1-T_1=\sum_{k=1}^p\tau_{(1,p+\sigma_{1}(1))}...\tau_{(k-1,p+\sigma_1(k-1))}(1-\tau_{(k,p+\sigma_1(k))}).
\end{equation}
Using the analogous formula for $T_2$, we apply the property~\eqref{testprop} to $\phi_{i+j}$ in Eq.~\eqref{newphi} to obtain
\begin{equation}\label{Phi-zero}
 \left|\Phi(\x,s)\right|\le CF(\x,s)[1\wedge m(\x_1)e^s]^\gamma[1\wedge m(\x_2)e^s]^\gamma.
\end{equation}
Combining Eqs.~\eqref{Q-two-neutral} and~\eqref{Phi-zero}, we have
\begin{equation*}
\left|\mathcal{Q}(\x_1,\x_2,s)\Phi(\x,s)\right|
\le CF(\x,s)
[1\wedge m(\x_1)e^s]^{1+\gamma}[1\wedge m(\x_2)e^s]^{1+\gamma}G_{i,j}(\x_1,\x_2,s).
\end{equation*}
Since $1+\gamma\ge2\gamma$, this proves Eq.~\eqref{jgoal} for $M=N=\emptyset$. Next, write $T_1a=(T_1(a_1),T_1(a_2))$, with the endpoints put in increasing order, and $T_1A=\{T_1a|a\in A\}$. For $a\in M$, since $T_1^2=\mathrm{Id}$ and $T_1\tau_aT_1=\tau_{T_1a}$, direct expansion gives
\begin{equation*}
 D_a(1-T_1)=D_a-T_1D_{T_1a},
\end{equation*}
and similarly for swaps in $N$ and $T_2$. Moving $T_1$  successively to the left using $D_aT_1=T_1 D_{T_1 a}$ (and similarly for $T_2$) gives from Eq.~\eqref{newphi}, for a nonempty set $A\subseteq M\cup N$,
\begin{equation}\label{Phi-conjugated-expansion}
D_A\Phi(\x,s)
=\left[D_{A\cap M}-T_1D_{T_1(A\cap M)}\right]
\left[D_{A\cap N}-T_2D_{T_2(A\cap N)}\right]
\left[\phi_{i+j}(T^+\x,s)\right].
\end{equation}
This expansion also holds if $A\cap M$ or $A\cap N$ is empty, with the empty product interpreted as the identity. The property~\eqref{testprop}, together with Eq.~\eqref{charge-reversal-telescoping}, applied to the terms of Eq.~\eqref{Phi-conjugated-expansion} gives
\begin{equation}\label{Phi-differences}
 \left|D_A\Phi(\x,s)\right|
\le CF(\x,s)[1\wedge m(\x_1)e^s]^{\gamma(1-|A\cap M|)_+}[1\wedge m(\x_2)e^s]^{\gamma(1-|A\cap N|)_+}
\prod_{(a_1,a_2)\in A}\bigl(|x_{a_1}-x_{a_2}|e^s\bigr)^\gamma.
\end{equation}
This also holds for $A=\emptyset$, as established in Eq.~\eqref{Phi-zero}. Recalling the derivation of Eq.~\eqref{Q-two-neutral}, we get similarly for $m\in M$
\begin{multline}\label{Q-swap}
|D_m\mathcal{Q}(\x_1,\x_2,s)|
=\left|(\la_{m_1}^{(p)}-\la_{m_2}^{(p)})\sum_{r=i+1}^{i+j}\la_{r-i}^{(q)}
\bigl(Q_s(x_r,x_{m_1})-Q_s(x_r,x_{m_2})\bigr)\right|\\
\le C[1\wedge m(\x_2)e^s]|x_{m_1}-x_{m_2}|e^sG_{i,j}(\x_1,\x_2,s).
\end{multline}
The same holds for $m\in N$ replacing $m(\x_2)$ with $m(\x_1)$. For $a\in M$ and $b\in N$, we have
\begin{multline}\label{Q-mixed-swap}
|D_{\{a,b\}}\mathcal{Q}(\x_1,\x_2,s)|
\le4\left|Q_s(x_{a_1},x_{b_1})-Q_s(x_{a_2},x_{b_1})-Q_s(x_{a_1},x_{b_2})+Q_s(x_{a_2},x_{b_2})\right|\\
\le C|x_{b_1}-x_{b_2}|e^s|x_{a_1}-x_{a_2}|e^sG_{i,j}(\x_1,\x_2,s).
\end{multline}
For any third $c\in M$ or $c\in N$ disjoint from $a,b$ we have $D_{\{a,b,c\}}\mathcal{Q}(\x_1,\x_2,s)=0$. Thus if $|A\cap N|>1$ or $|A\cap M|>1$
\begin{equation}\label{Q-higher-differences}
D_A\mathcal{Q}(\x_1,\x_2,s)=0	.
\end{equation}
Then using Eq.~\eqref{app:product-rule-first} gives
\begin{equation}\label{Q-Phi-Leibniz}
D_{M\cup N}
[\mathcal{Q}(\x_1,\x_2,s)\Phi(\x,s)]
=\sum_{A\subseteq M\cup N}
\left[D_A\mathcal{Q}(\x_1,\x_2,s)\right]
\left[\tau_AD_{(M\cup N)\setminus A}\Phi(\x,s)\right].
\end{equation}
By Eq.~\eqref{Q-higher-differences}, the right-hand side of Eq.~\eqref{Q-Phi-Leibniz} reduces to
\begin{multline}\label{Q-Phi-expansion}
\mathcal{Q}(\x_1,\x_2,s)
\left[D_{M\cup N}\Phi(\x,s)\right]
+\sum_{a\in M\cup N}[D_a\mathcal{Q}(\x_1,\x_2,s)]
\left[\tau_aD_{(M\cup N)\setminus\{a\}}\Phi(\x,s)\right]\\
+\sum_{\substack{a\in M\\ b\in N}}[D_{\{a,b\}}\mathcal{Q}(\x_1,\x_2,s)]
\left[\tau_{\{a,b\}}D_{(M\cup N)\setminus\{a,b\}}\Phi(\x,s)\right].
\end{multline}
The first term is bounded using Eqs.~\eqref{Q-two-neutral} and~\eqref{Phi-differences}:
\begin{multline}\label{first-term-bound}
\left|\mathcal{Q}(\x_1,\x_2,s)D_{M\cup N}\Phi(\x,s)\right|\le CG_{i,j}(\x_1,\x_2,s)F(\x,s)\\
\times[1\wedge m(\x_1)e^s]^{1+\gamma(1-|M|)_+}[1\wedge m(\x_2)e^s]^{1+\gamma(1-|N|)_+}
\prod_{(a_1,a_2)\in M\cup N}\bigl(|x_{a_1}-x_{a_2}|e^s\bigr)^\gamma.
\end{multline}
Since $[1\wedge m(\x_1)e^s]^{1+\gamma(1-|M|)_+}\le\chi_M(\x_1,s)$, and likewise for $N$ and $\x_2$, Eq.~\eqref{first-term-bound} gives the bound in Eq.~\eqref{jgoal}, using Eq.~\eqref{chi-definition}. For the second term in Eq.~\eqref{Q-Phi-expansion}, consider $a\in N$, and use Eqs.~\eqref{Q-swap} and~\eqref{Phi-differences}:
\begin{multline}\label{second-term-bound}
\left|[D_a\mathcal{Q}(\x_1,\x_2,s)]
\left[\tau_aD_{(M\cup N)\setminus\{a\}}\Phi(\x,s)\right]\right|\le CG_{i,j}(\x_1,\x_2,s)F(\x,s)\\
\times[1\wedge m(\x_1)e^s]^{1+\gamma(1-|M|)_+}[1\wedge m(\tau_a\x_2)e^s]^{\gamma(2-|N|)_+}
\prod_{(b_1,b_2)\in M\cup N}\bigl(|x_{b_1}-x_{b_2}|e^s\bigr)^\gamma.
\end{multline}
In the region $m(\tau_a\x_2)<m(\x_2)$ we have $[1\wedge m(\tau_a\x_2)e^s]^{\gamma(2-|N|)_+}\le[1\wedge m(\x_2)e^s]^{\gamma(2-|N|)_+}\le \chi_{N}(\x_2,s)$.
In the complementary region $m(\tau_a\x_2)\ge m(\x_2)$, Eq.~\eqref{matching-Lipschitz} and its symmetric version give
$|m(\tau_a\x_2)-m(\x_2)|\le2|x_{a_1}-x_{a_2}|$. Hence
\begin{equation}\label{permuted-chi-bound}
\begin{split}
[1\wedge m(\tau_a\x_2)e^s]^{\gamma(2-|N|)_+}
&\le\bigl([1\wedge m(\x_2)e^s]+2[1\wedge|x_{a_1}-x_{a_2}|e^s]\bigr)^{\gamma(2-|N|)_+}\\
&\le2\chi_N(\x_2,s),
\end{split}
\end{equation}
since $|N|\ge1$. Thus Eq.~\eqref{second-term-bound} satisfies Eq.~\eqref{jgoal}. The case $a\in M$ is treated in the same way.

For the third term in Eq.~\eqref{Q-Phi-expansion}, Eqs.~\eqref{Q-mixed-swap} and~\eqref{Phi-differences} give, for $a\in M$ and $b\in N$,
\begin{multline}\label{third-term-bound}
\left|[D_{\{a,b\}}\mathcal{Q}(\x_1,\x_2,s)]
\left[\tau_{\{a,b\}}D_{(M\cup N)\setminus\{a,b\}}\Phi(\x,s)\right]\right|\le CG_{i,j}(\x_1,\x_2,s)F(\x,s)\\
\times[1\wedge m(\tau_a\x_1)e^s]^{\gamma(2-|M|)_+}[1\wedge m(\tau_b\x_2)e^s]^{\gamma(2-|N|)_+}
\prod_{(m_1,m_2)\in M\cup N}\bigl(|x_{m_1}-x_{m_2}|e^s\bigr)^\gamma.
\end{multline}
Applying the comparison~\eqref{permuted-chi-bound} separately for $\x_1$ and $\x_2$ shows that Eq.~\eqref{third-term-bound} also satisfies the bound in Eq.~\eqref{jgoal}, proving it wholly.

For cases other than $(i,j)=(2p,2q)$ we use a similar strategy. If $i=2p$ and $j\ne2q$, set
\[
\Phi(\x,s)=(1-T_1)\phi_{i+j}(T^+\x,s),
\]
 use
Eq.~\eqref{Q-one-neutral}, and omit the factor $\chi_N$.
Keep the decomposition $\Phi=\Phi_1+\Phi_2$ and the same
integration orders, applying the second item of
Lemma~\ref{lem:energy-multiplier} in $\x_1$ and its first item
with $\kappa_{j,q}=1$ in $\x_2$.
The case $i\ne2p$ and $j=2q$ is symmetrical to the previous case.
If $i\ne2p$ and $j\ne2q$, estimate the two contributions in
Eq.~\eqref{ijodd} separately, with
$\Phi(\x,s)=\phi_{i+j}(T^\pm\x,s)$; use Eq.~\eqref{Q-basic},
omit both $\chi$-factors, and apply the first item of the lemma
with $\kappa_{i,p}=\kappa_{j,q}=1$.
For $i,j\ge2$, the resulting time bound is again
Eq.~\eqref{child-time-bound}.

If $i=1$, use $\mathcal F^{(1,0)}=1$ and integrate directly
in $x_1$, with no integration in $v$; the case $j=1$ is analogous.
Keep the integration order prescribed by $\Phi_a$.
The first spatial integration supplies $e^{-2s}$, and,
when $i+j>2$, the remaining time integral gives the denominator
$(i+j-2)(2-\fb)$.
Summing the finitely many contributions proves
Eq.~\eqref{propz1}.

For Eq.~\eqref{pairing-limit}, expand the recursive time
integrals, keeping the sums in
Eqs.~\eqref{ieven}--\eqref{ijeven} together.
We repeat the preceding induction with the spatial integration, now with all recursive time
variables held fixed, and not integrated over until the very end. Furthermore, we write each time variable as the time at the preceding recursive step (the lower bound of the corresponding integration domain) plus an increment $r\ge0$. The induction yields the right-hand side of
Eq.~\eqref{child-time-bound} multiplied by a product of
factors $e^{-(2-\fb)(k_i-1)r_i}$, each $r_i$ corresponding to a lower-order term,
and $k_i\ge2$. Together with the exponential factors
already present in Eq.~\eqref{child-time-bound}, these
give a bound independent of $t$ and integrable over each
$r_i$, since $2-\fb>0$. The only $t$ dependence for this bound is then contained in the upper limits for each $r_i$. Dominated convergence therefore proves
Eq.~\eqref{pairing-limit}.
\end{proof}

\section{Gaussian estimates and test functions}
We now verify that the random functions appearing in the martingale representation satisfy the test-function estimates used above almost surely. We first establish a spatial Hölder bound for the Gaussian field and estimate the moments of its coefficient under the tilted measures, for fixed $t$ and $\alpha\in\R$,\begin{equation*}
 \frac{\dd\P_{\alpha,t}}{\dd\P}\coloneqq\frac{e^{\alpha M_t}}{\E[e^{\alpha M_t}]},
 \qquad \E_{\alpha,t}[\cdot]\coloneqq\frac{\E[\cdot\,e^{\alpha M_t}]}{\E[e^{\alpha M_t}]}.
\end{equation*}

\begin{lemma}\label{lem:Holder-moments}
For $0<\gamma<1$, the field $(X_t(x))_{x\in \Lambda,t\ge0}$ has, for every $u\ge0$, an $\mathcal F_u$-measurable coefficient
$H_u^{(\gamma)}\ge1$, independent
of $\alpha$, such that
\begin{equation}\label{Holder-increments}
 |X_u(x)-X_u(y)|\le H_u^{(\gamma)}(e^u|x-y|)^\gamma,
 \qquad x,y\in \Lambda.
\end{equation}
For every $u,t\ge0$, $\alpha\in\R$ and real $n>0$,
\begin{equation}\label{Holder-moment-unnormalized}
 \E[e^{\alpha M_t}(H_u^{(\gamma)})^n]
 \le C_{\gamma,n}e^{n(1-\gamma)u}\E[e^{|\alpha|M_t}].
\end{equation}
For $\alpha\ge0$, we also have
\begin{equation}\label{Holder-moment-n}
\E_{\alpha,t}[(H_u^{(\gamma)})^n]
 \le C_{\gamma,n}e^{n(1-\gamma)
 u}.
\end{equation}
\end{lemma}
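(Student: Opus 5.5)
The plan is to obtain the Hölder bound \eqref{Holder-increments} at the rescaled scale $e^{-u}$ from a Garsia--Rodemich--Rumsey (or Kolmogorov chaining) argument applied to the rescaled field. We then bound the moments of the resulting coefficient under the tilt by combining Cauchy--Schwarz with Gaussian tail estimates and a comparison between $\E[e^{2\alpha M_t}]$ and $\E[e^{|\alpha| M_t}]$.

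\emph{Step 1 (increment variance).} For $x,y\in\Lambda$, Assumption~\ref{ass:white-noise} gives
\[
\E[(X_u(x)-X_u(y))^2]=\int_0^u\bigl(Q_v(x,x)+Q_v(y,y)-2Q_v(x,y)\bigr)\dd v\le C\int_0^u\bigl(1\wedge e^v|x-y|\bigr)\dd v .
\]
Here the gradient bound in \eqref{laderive} controls the integrand by $Ce^v|x-y|$, and the first bound controls it by $C$. When $e^u|x-y|\le1$ the integral is at most $C e^u|x-y|$. In all cases it is at most $C(1+\ln_+(e^u|x-y|))$ and at most $C(e^u|x-y|)^{2\gamma'}$ for any $\gamma'\le 1/2$. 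These bounds are not sharp enough for exponent $\gamma$ close to $1$. The improvement comes from the Hessian bound: since $X_u$ is a.s.\ $C^1$ with $\E|\nabla X_u(x)|^2=\int_0^u\mathrm{tr}\,\nabla_x\nabla_yQ_v(x,x)\dd v\le Ce^{2u}$, we get $\E[(X_u(x)-X_u(y))^2]\le C e^{2u}|x-y|^2\wedge C(1+u)$, and more precisely $\le C\,(e^u|x-y|)^{2}$ for $e^u|x-y|\le 1$.

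\emph{Step 2 (Garsia--Rodemich--Rumsey).} We define
\[
H_u^{(\gamma)}\coloneqq1+\sup_{x\ne y\in\Lambda}\frac{|X_u(x)-X_u(y)|}{(e^u|x-y|)^\gamma}.
\]
This is $\mathcal F_u$-measurable, independent of $\alpha$, and satisfies \eqref{Holder-increments} by construction. We write $H_u^{(\gamma)}\le 1+H'+H''$, where $H'$ is the supremum over pairs with $e^u|x-y|\le1$ and $H''$ the supremum over the remaining pairs. For $H'$ we apply GRR to the rescaled field $z\mapsto X_u(e^{-u}z)$ on each ball of radius $O(1)$ covering $e^u\Lambda$; there are $O(e^{2u})$ such balls. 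The rescaled increments have variance $\le C|z-w|^2$ uniformly, so each local Hölder constant has Gaussian tails uniform in $u$. Taking a union bound over the $O(e^{2u})$ balls costs only a factor $\sqrt{u}$ in the moments. For $H''$ we use $|X_u(x)-X_u(y)|\le 2\sup|X_u|$, which is $O(\sqrt u\,)$ in every moment, divided by $(e^u|x-y|)^\gamma\ge1$. Either way $\E[(H_u^{(\gamma)})^n]\le C_{\gamma,n}(1+u)^{n/2}\le C_{\gamma,n}e^{n(1-\gamma)u}$, since $\gamma<1$. This slack is exactly what the stated bound allows, and it also allows a crude chaining argument in place of the sharp one.

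\emph{Step 3 (tilted moments).} By Cauchy--Schwarz,
\[
\E[e^{\alpha M_t}H^n]\le \E[e^{2\alpha M_t}]^{1/2}\E[H^{2n}]^{1/2}.
\]
This loses the comparison with $\E[e^{|\alpha|M_t}]$, which we expect to be the \emph{main obstacle}. We expect $\E[e^{2\alpha M_t}]$ to be far larger than $\E[e^{|\alpha|M_t}]^2$ in the regime $\gb^2\ge2$, where the cumulants diverge. Our first attempt will avoid Hölder in $\alpha$ altogether. We bound $e^{\alpha M_t}\le e^{|\alpha| M_t}+e^{-|\alpha|M_t}$. The symmetry $X\mapsto X+\pi/\gb$ is unavailable, but $X\mapsto -X$ leaves $H$ invariant and $M_t$ invariant too, since cosine is even. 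So instead we condition on $\mathcal F_u$. Writing $M_t=\int\cos(\gb X_u+\gb(X_t-X_u))e^{\fb K_t}\dd\mu$ and using independence of increments, $\E[e^{\alpha M_t}\mid\mathcal F_u]$ equals $e^{\alpha}$-type functionals of $X_u$. We then need $\E[e^{\alpha M_t}\mid\mathcal F_u]\le C\,\E[e^{|\alpha|M_t}]\cdot G_u$ with $G_u$ having moments $e^{o(u)}$, and we will try to get this from Jensen plus the martingale representation \eqref{partition-expansion}. If that fails, we will fall back to the observation that for $\alpha\ge0$ and $H$ a functional of $X_u$ alone, the tilted law of $X_u$ is an explicit Gibbs perturbation to which a Gaussian concentration inequality (Borell--TIS under the tilt, using positivity of the tilt for $\alpha\ge0$) applies. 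That route would give \eqref{Holder-moment-n} directly, and \eqref{Holder-moment-unnormalized} would follow by splitting $\alpha$ by sign.
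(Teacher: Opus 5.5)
Your Steps 1--2 only address the untilted moments, and they are fine there. There is one harmless slip: $\sup_\Lambda|X_u|$ is of order $u$, not $\sqrt u$, since a log-correlated field at scale $e^{-u}$ has roughly $e^{2u}$ effectively independent values of variance $u$. Polynomial growth in $u$ is still far below $e^{n(1-\gamma)u}$. The substance of the lemma is the comparison with $\E[e^{|\alpha|M_t}]$ uniformly in $t$, and Step 3 contains no argument for it. You correctly discard Cauchy--Schwarz.

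\emph{The conditioning route.} It requires $\E[e^{\alpha M_t}\mid\mathcal F_u]\le C\,\E[e^{|\alpha|M_t}]\,G_u$ uniformly in $t$. That is a uniform bound on the $\mathcal F_u$-marginal density of the tilted law, a statement of the same depth as Theorem~\ref{maintheorem}, whose proof uses this lemma as an input. ``Jensen plus the martingale representation'' does not deliver it.

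\emph{The fallback.} It is invalid. Borell--TIS needs a Gaussian law, and its Bakry--\'Emery/Caffarelli extensions need a log-concave perturbation. But $e^{\alpha M_t}$ is not log-concave in the field, because $\cos$ is neither convex nor concave. The bounded-perturbation (Holley--Stroock) version loses a factor $e^{2|\alpha|\sup|M_t|}$, which blows up with $t$. Positivity of the tilt buys nothing. Finally, ``splitting $\alpha$ by sign'' does not explain why an $e^{-|\alpha|M_t}$-weighted expectation is controlled by $\E[e^{|\alpha|M_t}]$, as \eqref{Holder-moment-unnormalized} demands.

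The missing idea is a Gaussian domination inequality that uses the positivity of the moments of $M_t$. Let $Y$ be a centred Gaussian linear functional of the field. Expanding $M_t^k$ as in the moment formula writes $\E[e^{\theta Y}M_t^k]$ as a positive-weighted integral of terms $\E[\cos(\gb Z)e^{\theta Y}]$, with $Z=\sum_l\lambda_lX_t(x_l)$ and $\lambda\in\{-1,1\}^k$. The explicit formula $\E[e^{\theta Y+\mathrm i\gb Z}]=e^{\frac{\theta^2}{2}\E[Y^2]-\fb\E[Z^2]}e^{\mathrm i\theta\gb\E[YZ]}$ gives $|\E[\cos(\gb Z)e^{\theta Y}]|\le e^{\frac{\theta^2}{2}\E[Y^2]}\E[\cos(\gb Z)]$, with $\E[\cos(\gb Z)]>0$. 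Hence $|\E[e^{\theta Y}M_t^k]|\le e^{\frac{\theta^2}{2}\E[Y^2]}\E[M_t^k]$ with $\E[M_t^k]\ge0$. Summing the exponential series (legitimate, since $M_t$ is bounded for fixed $t$) yields $\E[e^{\alpha M_t+\theta Y}]\le e^{\frac{\theta^2}{2}\E[Y^2]}\E[e^{|\alpha|M_t}]$ for every real $\alpha$.

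Now take $Y=X_u(x)-X_u(y)$. Its variance is at most $C(e^u|x-y|)^2$ for all $x,y\in\Lambda$, by your Step 1 together with \eqref{log-covariance-bound}. So the increments have sub-Gaussian tails under the positive measure $e^{\alpha M_t}\dd\P$, with prefactor $\E[e^{|\alpha|M_t}]$ and $t$-independent constants. Chaining (your Step 2, or Kolmogorov--Totoki as in the paper) is linear in the underlying measure. It therefore runs unchanged under $e^{\alpha M_t}\dd\P$ and gives \eqref{Holder-moment-unnormalized} for your $\alpha$-independent $H_u^{(\gamma)}$. Then \eqref{Holder-moment-n} follows for $\alpha\ge0$ by dividing by $\E[e^{\alpha M_t}]$.
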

\begin{proof}
The moments of $M_t$ for integers $n\ge1$ are
\begin{equation}\label{moment-expansion}
 \E[M_t^n]=\frac{1}{2^n}\sum_{\la\in\{-1,1\}^n}\int_{\Lambda^n}
 \E\left[\cos\left(\gb\sum_{k=1}^n\la_kX_t(x_k)\right)\right]
 e^{\fb\sum_{k=1}^nK_t(x_k,x_k)}\mu(\dd\x).
\end{equation}
For centred jointly Gaussian $X,Y$,
\begin{equation*}
 \E[e^{\la Y+\mathrm i\gb X}]
 =e^{\frac{\la^2}{2}\E[Y^2]-\fb\E[X^2]}
   e^{\mathrm i\la\gb\E[XY]}.
\end{equation*}
Taking real parts and absolute values, and using $\E[\cos(\gb X)]=e^{-\fb\E[X^2]}$, gives
\[
 |\E[\cos(\gb X)e^{\la Y}]|\le e^{\la^2\E[Y^2]/2}\E[\cos(\gb X)].
\]
For every centred Gaussian linear functional $Y$ of the field, Eq.~\eqref{moment-expansion} therefore yields
\begin{equation*}
 |\E[e^{\la Y}M_t^n]|\le e^{\frac{\la^2}{2}\E[Y^2]}\E[M_t^n].
\end{equation*}
Expanding $e^{\alpha M_t}$ and applying this bound term by term gives, for every $\alpha\in\R$,
\begin{equation}\label{Gaussian-domination}
 \E[e^{\alpha M_t+\la Y}]
 \le e^{\frac{\la^2}{2}\E[Y^2]}\E[e^{|\alpha|M_t}].
\end{equation}
Now taking $Y=X_u(x)-X_u(y)$, we obtain from Eq.~\eqref{laderive} and the mean value theorem
\begin{align}\label{increment-variance}
 \E[(X_u(x)-X_u(y))^2]
 &=\int_0^u\bigl(Q_v(x,x)+Q_v(y,y)-2Q_v(x,y)\bigr)\dd v\le C (e^{u}|x-y|)^{2}.
\end{align}
Consequently, for $r>0$, applying Eq.~\eqref{Gaussian-domination} gives
\begin{equation*}
 \E[e^{\alpha M_t}\1_{\{|X_u(x)-X_u(y)|\ge r\}}]
 \le2\exp\left(-\frac{r^2}{2C(e^{u}|x-y|)^{2}}\right)\E[e^{|\alpha|M_t}].
\end{equation*}
From the tail we have, for every $n\ge1$,
\begin{equation*}
 \E[e^{\alpha M_t}|X_u(x)-X_u(y)|^n]
 \le C_n(e^u|x-y|)^n\E[e^{|\alpha|M_t}].
\end{equation*}

The Kolmogorov--Totoki argument
\cite[Section~1.8, Lemmas~1.8.1--1.8.2]{Kunita2019}, applied with the preceding moment bound, gives, for $0<\gamma<1$, an $\mathcal F_u$-measurable coefficient $H_u^{(\gamma)}\ge1$ satisfying Eq.~\eqref{Holder-increments} and, for $n\ge 3\vee \frac{2}{1-\gamma}$, Eq.~\eqref{Holder-moment-unnormalized}.
H\"older's inequality extends the bound to all real $n>0$.
For $\alpha\ge0$, dividing by $\E[e^{\alpha M_t}]$ gives Eq.~\eqref{Holder-moment-n}.
\end{proof}
To apply Proposition~\ref{prop:pairing} to the random functions in the martingale representation~\eqref{martingale-representation}, we verify that the following functions belong to the class of test functions.
\begin{lemma}\label{lem:cosine-test}
Fix $\gb^2/4<\gamma<1$, $i\ge2$, and
$0\le p\le\lfloor i/2\rfloor$.
For every sample path satisfying Eq.~\eqref{Holder-increments},
the following functions belong to
$\mathcal D_{i,p}^\gamma(\Lambda^i\times\R_+)$:
\begin{enumerate}
\item
\begin{equation*}
 e^{\fb\sum_{a=1}^iK_u(x_a,x_a)}
 \cos(\gb Y_p(\x,u))
\end{equation*}
with majorant
$Ce^{\fb iu}(H_u^{(\gamma)})^{\lfloor i/2\rfloor}$.

\item
\begin{equation*}
 e^{\fb\sum_{a=1}^iK_u(x_a,x_a)}
 \left[
 \cos(\gb Y_p(\x,u))
 -e^{-\fb\E[Y_p(\x,u)^2]}
 \right]
\end{equation*}
with majorant $Ce^{\fb iu}(H_u^{(\gamma)})^i$.
If $i=2p$, it satisfies Eq.~\eqref{unweighted-property}
with this majorant.

\item For $z\in \Lambda$,
\begin{equation*}
 e^{\fb\sum_{a=1}^iK_u(x_a,x_a)}
 \sin(\gb Y_p(\x,u))
 \left(\sum_{a=1}^i\la_a^{(p)}Q_u(z,x_a)\right)
\end{equation*}
with majorant
\[
 Ce^{\fb iu}(H_u^{(\gamma)})^i
 \sum_{a=1}^i(1+e^u|z-x_a|)^{-3}.
\]
If $i=2p$, it satisfies Eq.~\eqref{unweighted-property}
with this majorant.
The same assertions hold when $Q_u(z,x_a)$ is replaced by
$e^{-u}\partial_{z_j}Q_u(z,x_a)$, for $j=1,2$.
\end{enumerate}
\end{lemma}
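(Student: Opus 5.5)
The plan is to verify the three conditions of Definition~\ref{TestDef} directly, using the Hölder bound \eqref{Holder-increments} for the random parts and the deterministic estimates \eqref{laderive}, \eqref{log-covariance-bound} and \eqref{increment-variance} for everything else.

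\textbf{Symmetry and prefactor.} Symmetry (item~\ref{test:symmetry}) is immediate: $Y_p(\x,u)$, $\E[Y_p(\x,u)^2]$, $\sum_a K_u(x_a,x_a)$ and $\sum_a\la^{(p)}_aQ_u(z,x_a)$ are invariant under permutations within the negative charges and within the positive charges. For the same reason, in every $D_M$ only pairs of opposite charge matter, since $D_m$ annihilates the function when $m$ joins two equal charges. The prefactor $e^{\fb\sum_aK_u(x_a,x_a)}$ is swap invariant, so it commutes with $D_M$, and by \eqref{log-covariance-bound} it is at most $Ce^{\fb iu}$. Each majorant is then of the form $\sum_k f(x_k,u)$: either a constant, since $\mu(\Lambda)<\infty$ makes constants integrable, or $f(x,u)=(1+e^u|z-x|)^{-3}$. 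We may also restrict to $e^u|x_{m_1}-x_{m_2}|<1$ for $m\in M$, since otherwise the trivial bound $|D_M\cdot|\le 2^{|M|}\sup|\cdot|$ suffices.

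\textbf{Item 1.} For a pair $m=(m_1,m_2)$ of opposite charges set $\delta_m=X_u(x_{m_1})-X_u(x_{m_2})$. Then $Y_p=Z+\sum_{m\in M'}\pm\delta_m$, where $M'\subseteq M$ are the opposite-charge pairs, $Z$ is swap invariant, and each $\tau_m$ flips the sign of $\delta_m$. Writing $\cos$ as the real part of $e^{\mathrm i\gb Y_p}$ gives
\[
D_M e^{\mathrm i\gb Y_p}=e^{\mathrm i\gb Z}\prod_{m\in M'}2\mathrm i\sin(\gb\delta_m).
\]
With \eqref{Holder-increments} and $H_u^{(\gamma)}\ge1$ this yields $|\sin(\gb\delta_m)|\le C H_u^{(\gamma)}[1\wedge e^u|x_{m_1}-x_{m_2}|]^\gamma$. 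Since $|M'|\le\lfloor i/2\rfloor$, we get the claimed majorant $Ce^{\fb iu}(H_u^{(\gamma)})^{\lfloor i/2\rfloor}$.

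\textbf{Item 2.} Multiplying through by the prefactor, the deterministic term becomes $e^{-\fb U_{i,p}(\x,u)}$. Its differences are handled exactly as in the proof of Lemma~\ref{lem:energy-multiplier}, via \eqref{energy-first-difference}--\eqref{energy-third-difference} with $s=0,t=u$, and using $|x|e^u\le(|x|e^u)^\gamma$ in the short-pair regime; \eqref{energy-crude} bounds $e^{-\fb U_{i,p}}$ by $Ce^{\fb iu}$.

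For $i=2p$ we must also produce the factor $\chi_M$ of \eqref{chi-definition}, which is automatic for $|M|\ge2$. Fix an optimal matching $\sigma$ for $m(\x)$, so that $Y_p=\sum_k\pm(X_u(x_k)-X_u(x_{p+\sigma(k)}))$. By \eqref{Holder-increments} and concavity, $|Y_p|\le CH_u^{(\gamma)}(e^um(\x))^\gamma$, and by \eqref{increment-variance} with Minkowski, $\E[Y_p^2]\le C(e^um(\x))^2$.
\begin{itemize}
\item For $M=\emptyset$: $|1-\cos\gb Y_p|\le C(H_u^{(\gamma)})^2[1\wedge e^um(\x)]^{2\gamma}$, and similarly $1-e^{-\fb\E[Y_p^2]}\le C[1\wedge e^um(\x)]^2$.
\item For $M=\{m\}$: $D_m\cos(\gb Y_p)=\pm2\sin(\gb\delta_m)\sin(\gb Z)$, and $|Z|\le CH_u^{(\gamma)}\bigl((e^um(\x))^\gamma+(e^u|x_{m_1}-x_{m_2}|)^\gamma\bigr)$, which produces $\chi_{\{m\}}$. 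For the Gaussian term, $D_m\E[Y_p^2]=\pm4\E[Z\delta_m]$, and Cauchy--Schwarz gives $|D_m\E[Y_p^2]|\le C e^u(m(\x)+|x_{m_1}-x_{m_2}|)\,e^u|x_{m_1}-x_{m_2}|$, which again gives the required factor.
\end{itemize}

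\textbf{Item 3.} Apply the Leibniz expansion \eqref{app:product-rule-first} to $\sin(\gb Y_p)\cdot\Sigma_Q$, where $\Sigma_Q=\sum_a\la^{(p)}_aQ_u(z,x_a)$. Differences of $\sin(\gb Y_p)$ are controlled as in item 1. For $\Sigma_Q$: $D_m\Sigma_Q=\pm2(Q_u(z,x_{m_1})-Q_u(z,x_{m_2}))$ is bounded via the gradient bound in \eqref{laderive} and the mean value theorem by $Ce^u|x_{m_1}-x_{m_2}|\sum_a(1+e^u|z-x_a|)^{-3}$; here convexity of $\Lambda$ keeps the segment inside $\Lambda$. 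Moreover $D_{\{a,b\}}\Sigma_Q=0$ for disjoint pairs $a,b$.

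For $i=2p$, the matching argument of \eqref{Q-one-neutral} gives $|\Sigma_Q|\le C[1\wedge e^um(\x)]\sum_a(1+e^u|z-x_a|)^{-3}$. Combined with $|\sin\gb Y_p|\le CH_u^{(\gamma)}[1\wedge e^um(\x)]^\gamma$, this supplies $\chi_\emptyset$, using $1+\gamma\ge2\gamma$. For $M=\{m\}$ the two Leibniz terms are:
\begin{itemize}
\item $(D_m\sin)\cdot\tau_m\Sigma_Q$, which needs $m(\tau_m\x)$ to be compared with $m(\x)+2|x_{m_1}-x_{m_2}|$, as in \eqref{permuted-chi-bound};
\item $\sin\cdot D_m\Sigma_Q$, where the factor $[1\wedge e^um(\x)]^\gamma$ coming from $\sin$ suffices.
\end{itemize}
The replacement of $Q_u$ by $e^{-u}\partial_{z_j}Q_u$ goes through verbatim, using the mixed second-derivative bound in \eqref{laderive}.

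I expect the main obstacle to be this neutral case $M=\{m\}$ in items 2 and 3. There the $\chi$ factor must come either from the undifferentiated factor, measured at a swapped configuration, or from the Gaussian covariance term, and one has to track which matching is used at which point.
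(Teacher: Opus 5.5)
Your proposal is correct. Its skeleton matches the paper's: the per-pair factorisation of $e^{\mathrm i\gb Y_p}$ for item~1, the optimal matching with the splitting $[\cos(\gb Y_p)-1]+[1-e^{-\fb\E[Y_p^2]}]$ for $M=\emptyset$ in the neutral case, and the one-swap product rule plus matching for item~3.

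\textbf{The Gaussian term in item~2.} This is where you genuinely deviate. You use $e^{\fb\sum_aK_u(x_a,x_a)}e^{-\fb\E[Y_p^2]}=e^{-\fb U_{i,p}(\x,u)}$ and recycle the deterministic estimates \eqref{energy-first-difference}--\eqref{energy-third-difference} from the proof of Lemma~\ref{lem:energy-multiplier}. The paper instead writes $e^{-\fb\E[Y_p^2]}=\E[\cos(\gb Y_p)]$, commutes the swaps with the expectation, and applies H\"older's inequality and Gaussian moments to \eqref{phase-mixed-swaps}. The paper's route reuses the item-1 bound and treats short and long pairs at once, because $1\wedge|X_u(x_{m_1})-X_u(x_{m_2})|$ covers both regimes. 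Your route is purely deterministic, but it relies on the inductive energy-difference argument and on restricting to short pairs.

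\textbf{The short-pair restriction.} Your justification for it is too quick. When $M$ mixes short and long pairs, the trivial bound $2^{|M|}\sup|\cdot|$ loses the factors of the short pairs. You need the expansion \eqref{app:partial-subset-expansion}, as in \eqref{swap-expansion}. This is harmless here, since the majorants are permutation invariant and $\chi_A\le2$.

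\textbf{The neutral one-swap case.} You bound the swap-invariant part $Z$ directly by $|Y_p(\x,u)|+|\delta_m|$. This avoids the matching at $\tau_m\x$ and \eqref{matching-one-swap}, which the paper uses for the same quantity $\frac12\bigl(Y_p(\x,u)+Y_p(\tau_m\x,u)\bigr)$. For the Gaussian part you differentiate $\E[Y_p^2]$ instead of applying Cauchy--Schwarz inside the product of sines. To reach $\chi_{\{m\}}[1\wedge e^u|x_{m_1}-x_{m_2}|]^\gamma$, this needs two routine extra steps you left implicit:
\begin{enumerate}
\item the inequality $|e^{-a}-e^{-b}|\le1\wedge|a-b|$ for $a,b\ge0$;
\item a case split on whether $e^u\bigl(m(\x)+|x_{m_1}-x_{m_2}|\bigr)\le1$.
\end{enumerate}

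\textbf{Item~3.} You put the sine first in \eqref{app:one-swap-product-rule}, so the swapped factor is $\Sigma_Q(\tau_m\x)$ rather than the sine at $\tau_m\x$. Either order works.
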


\begin{proof}
All the displayed tests satisfy the symmetry condition in
item~\ref{test:symmetry} of Definition~\ref{TestDef}.
The factor $e^{\fb\sum_{a=1}^iK_u(x_a,x_a)}$ is invariant
under all coordinate permutations and is at most
$Ce^{\fb iu}$ by Eq.~\eqref{log-covariance-bound}.
We therefore omit this factor from the estimates below
and restore it at the end.

Starting from the first item, we have
$|\cos(\gb Y_p(\x,u))|\le1\le
(H_u^{(\gamma)})^{\lfloor i/2\rfloor}$.
This proves the case $M=\emptyset$.
For $|M|\ge1$, take $(a,b)\in M$.
If $\la_a^{(p)}=\la_b^{(p)}$, the corresponding difference
is zero. If $\la_a^{(p)}=-\la_b^{(p)}$, then
\begin{equation}\label{phase-one-swap}
 \left|D_{(a,b)}
 e^{\mathrm i\gb(\la_a^{(p)}X_u(x_a)
                    +\la_b^{(p)}X_u(x_b))}\right|
 =2|\sin(\gb(X_u(x_a)-X_u(x_b)))|
 \le4(1\wedge|X_u(x_a)-X_u(x_b)|).
\end{equation}
Now factor $e^{\mathrm i\gb Y_p(\x,u)}$ into exponentials
of the above form for each pair in $M$, and the factor
from the remaining coordinates. Each difference affects
only its own factor, so Eq.~\eqref{phase-one-swap} gives
\begin{equation}\label{phase-mixed-swaps}
 \left|D_Me^{\mathrm i\gb Y_p(\x,u)}\right|
 \le4^{|M|}
 \prod_{(m_1,m_2)\in M}
 (1\wedge|X_u(x_{m_1})-X_u(x_{m_2})|).
\end{equation}
Taking real parts, applying Eq.~\eqref{Holder-increments},
and using $H_u^{(\gamma)}\ge1$ and
$|M|\le\lfloor i/2\rfloor$, we obtain
\begin{equation}\label{cosine-test-bound}
 \left|D_M\cos(\gb Y_p(\x,u))\right|
 \le C(H_u^{(\gamma)})^{\lfloor i/2\rfloor}
 \prod_{(m_1,m_2)\in M}
 (1\wedge e^u|x_{m_1}-x_{m_2}|)^\gamma.
\end{equation}
This proves the first item. Taking imaginary parts gives
the same estimate for the sine.

For the second item, first suppose $i=2p$.
Let $\sigma_\x$ attain the minimum in
Eq.~\eqref{matching-distance}. Then
\begin{equation}\label{phase-matching}
 Y_p(\x,u)
 =\sum_{k=1}^p
 \left(X_u(x_{p+\sigma_\x(k)})-X_u(x_k)\right).
\end{equation}
We write
\[
 \cos(\gb Y_p(\x,u))-e^{-\fb\E[Y_p(\x,u)^2]}
 =[\cos(\gb Y_p(\x,u))-1]
  +[1-e^{-\fb\E[Y_p(\x,u)^2]}].
\]
For the first term, Eqs.~\eqref{phase-matching}
and~\eqref{Holder-increments}, together with
$|x_k-x_{p+\sigma_\x(k)}|\le m(\x)$, give
\begin{equation}\label{phase-matching-bound}
 1\wedge|Y_p(\x,u)|
 \le CH_u^{(\gamma)}[1\wedge e^um(\x)]^\gamma.
\end{equation}
Consequently, by trigonometry
\begin{equation}\label{cosine-zero-bound}
 |\cos(\gb Y_p(\x,u))-1|
 \le2(1\wedge|\gb Y_p(\x,u)|)^2
 \le C(H_u^{(\gamma)})^2
 [1\wedge e^um(\x)]^{2\gamma}.
\end{equation}
For the second term, the triangle inequality and
Eq.~\eqref{increment-variance} give
\begin{equation}\label{phase-matching-variance}
 \E[Y_p(\x,u)^2]^{1/2}
 \le\sum_{k=1}^p
 \E[|X_u(x_{p+\sigma_\x(k)})-X_u(x_k)|^2]^{1/2}
 \le Ce^um(\x).
\end{equation}
Thus
\[
 1-e^{-\fb\E[Y_p(\x,u)^2]}
 \le1\wedge\bigl(\fb\E[Y_p(\x,u)^2]\bigr)
 \le C[1\wedge e^um(\x)]^2
 \le C[1\wedge e^um(\x)]^{2\gamma}.
\]
Combining this with Eq.~\eqref{cosine-zero-bound} proves
the required estimate for $M=\emptyset$.

For $|M|\ge1$, use
\begin{equation}\label{cosine-expectation-identity}
 \cos(\gb Y_p(\x,u))-e^{-\fb\E[Y_p(\x,u)^2]}
 =\cos(\gb Y_p(\x,u))-\E[\cos(\gb Y_p(\x,u))].
\end{equation}
If $M=\{a\}$, trigonometry gives
\begin{equation}\label{cosine-one-swap}
 |D_a\cos(\gb Y_p(\x,u))|
 =2\left|\sin\left(\frac{\gb}{2}
       (Y_p(\x,u)-Y_p(\tau_a\x,u))\right)\right|\left|\sin\left(\frac{\gb}{2}
       (Y_p(\x,u)+Y_p(\tau_a\x,u))\right)\right|.
\end{equation}
Since
\[
 |Y_p(\x,u)-Y_p(\tau_a\x,u)|
 \le2|X_u(x_{a_1})-X_u(x_{a_2})|,
\]
Eq.~\eqref{Holder-increments} gives
\begin{equation}\label{phase-difference-sine}
 \left|\sin\left(\frac{\gb}{2}
       (Y_p(\x,u)-Y_p(\tau_a\x,u))\right)\right|
 \le C H_u^{(\gamma)}
 [1\wedge e^u|x_{a_1}-x_{a_2}|]^\gamma.
\end{equation}
For the second sine, apply Eq.~\eqref{phase-matching-bound}
at $\x$ and $\tau_a\x$. By Eq.~\eqref{matching-Lipschitz},
\begin{equation}\label{matching-one-swap}
 1\wedge e^um(\tau_a\x)
 \le[1\wedge e^um(\x)]
       +2[1\wedge e^u|x_{a_1}-x_{a_2}|].
\end{equation}
Therefore
\begin{equation}\label{phase-sum-sine}
 \left|\sin\left(\frac{\gb}{2}
       (Y_p(\x,u)+Y_p(\tau_a\x,u))\right)\right|
 \le CH_u^{(\gamma)}
 \left([1\wedge e^um(\x)]
       +[1\wedge e^u|x_{a_1}-x_{a_2}|]\right)^\gamma.
\end{equation}
Combining Eqs.~\eqref{cosine-one-swap},
\eqref{phase-difference-sine} and~\eqref{phase-sum-sine},
we obtain
\begin{equation}\label{cosine-one-swap-bound}
 |D_a\cos(\gb Y_p(\x,u))|
 \le C(H_u^{(\gamma)})^2
 \left([1\wedge e^um(\x)]
       +[1\wedge e^u|x_{a_1}-x_{a_2}|]\right)^\gamma[1\wedge e^u|x_{a_1}-x_{a_2}|]^\gamma.
\end{equation}

For the expectation, Eqs.~\eqref{phase-matching-variance}
and~\eqref{increment-variance} give
\begin{align*}
 \E[|Y_p(\x,u)-Y_p(\tau_a\x,u)|^2]^{1/2}
 &\le Ce^u|x_{a_1}-x_{a_2}|,\\
 \E[|Y_p(\x,u)+Y_p(\tau_a\x,u)|^2]^{1/2}
 &\le Ce^u\bigl(m(\x)+|x_{a_1}-x_{a_2}|\bigr).
\end{align*}
Applying Cauchy--Schwarz in Eq.~\eqref{cosine-one-swap}
and using $|\sin v|\le1\wedge|v|$, we obtain
\begin{equation*}
 |D_a\E[\cos(\gb Y_p(\x,u))]|
 \le C
 \left([1\wedge e^um(\x)]
       +[1\wedge e^u|x_{a_1}-x_{a_2}|]\right)^\gamma[1\wedge e^u|x_{a_1}-x_{a_2}|]^\gamma.
\end{equation*}
Together with Eqs.~\eqref{cosine-expectation-identity}
and~\eqref{cosine-one-swap-bound}, this proves the required
estimate for $M=\{a\}$.

For $|M|\ge2$, Eq.~\eqref{increment-variance} and Gaussian
moments give, for every integer $r\ge1$,
\begin{equation*}
 \E[(1\wedge|X_u(x)-X_u(y)|)^r]
 \le C(1\wedge e^u|x-y|)^{r\gamma}.
\end{equation*}
Since the swaps commute with expectation, H\"older's
inequality in Eq.~\eqref{phase-mixed-swaps} gives
\[
 |D_M\E[\cos(\gb Y_p(\x,u))]|
 \le C\prod_{(m_1,m_2)\in M}
 (1\wedge e^u|x_{m_1}-x_{m_2}|)^\gamma.
\]
Combining this with Eq.~\eqref{cosine-test-bound} proves
Eq.~\eqref{unweighted-property}, since $\chi_M=1$ for
$|M|\ge2$. In all three cases, the coefficient is at most
$C(H_u^{(\gamma)})^i$. If $i\ne2p$, the preceding argument for $|M|\ge2$ applies similarly, but also for the case $|M|=1$. For $M=\emptyset$ we use
$|\cos(\gb Y_p)-\E[\cos(\gb Y_p)]|\le2$.
This proves the second item.

For the third item, Eq.~\eqref{laderive} and symmetry give
\begin{equation*}
 |Q_u(z,x)|+e^{-u}|\nabla_xQ_u(z,x)|
 +e^{-u}|\partial_{z_j}Q_u(z,x)|\\
 +e^{-2u}|\nabla_x\partial_{z_j}Q_u(z,x)|
 \le C(1+e^u|z-x|)^{-3}.
\end{equation*}
Thus $Q_u(z,\cdot),$
$e^{-u}\partial_{z_j}Q_u(z,\cdot)$ and their scaled gradients satisfy the same bound. Proving the assertion for $Q_u$ then proves the claim for $e^{-u}\partial_{z_j}Q_u(z,\cdot)$.
Applying the mean value theorem in the case $e^u|x_{a_1}-x_{a_2}|\le1$ gives
\begin{align*}
 \left|\sum_{b=1}^i\la_b^{(p)}Q_u(z,x_b)\right|
 &\le C\sum_{b=1}^i(1+e^u|z-x_b|)^{-3},\\
 \left|D_a\sum_{b=1}^i\la_b^{(p)}Q_u(z,x_b)\right|
 &\le C[1\wedge e^u|x_{a_1}-x_{a_2}|]
       \sum_{b=1}^i(1+e^u|z-x_b|)^{-3}.
\end{align*}
If $|M|\ge2$, $D_M\sum_{b=1}^i\la_b^{(p)}Q_u(z,x_b)=0$ since each
summand depends on only one coordinate.
The formula~\eqref{app:product-rule-first}, applied to $\sin(\gb Y_p(\x,u))\left(\sum_{b=1}^i\la_b^{(p)}Q_u(z,x_b)\right)$, therefore
has only terms with zero or one difference on the sum.
Taking imaginary parts in Eq.~\eqref{phase-mixed-swaps}
proves the
claim in the case $i\neq 2p$.

If $i=2p$, pairing the terms according to $\sigma_\x$ gives
\[
 \left|\sum_{b=1}^i\la_b^{(p)}Q_u(z,x_b)\right|
 \le C[1\wedge e^um(\x)]
       \sum_{b=1}^i(1+e^u|z-x_b|)^{-3}.
\]
For $M=\emptyset$, this bound and
Eq.~\eqref{phase-matching-bound}, paired with the inequality $|\sin w|\le1\wedge |w|$, give the factor
$[1\wedge e^um(\x)]^{1+\gamma}$, which is at most
$[1\wedge e^um(\x)]^{2\gamma}$.
For $M=\{a\}$, apply the formula
\eqref{app:one-swap-product-rule}, putting the sum first.
The term with the difference on the sine retains the
factor $[1\wedge e^um(\x)]$ from the sum.
The other term is bounded using
Eq.~\eqref{phase-matching-bound} at $\tau_a\x$ and
Eq.~\eqref{matching-one-swap}.
Both terms are at most
\[
 C(H_u^{(\gamma)})^i
 \left(\sum_{b=1}^i(1+e^u|z-x_b|)^{-3}\right)
 \chi_{\{a\}}(\x,u)
 [1\wedge e^u|x_{a_1}-x_{a_2}|]^\gamma.
\]
For $|M|\ge2$, the same estimate as for $i\neq2p$ suffices, since
$\chi_M=1$.
\end{proof}

\section{Proof of Theorem \ref{maintheorem}}
Fix $\alpha\in\R$, $\gb^2<4$ and an integer $n\ge1$ such that
\[
 4-\frac{2}{n-1}\le\gb^2<4-\frac2n,
\]
(for $n=1$ we assume $0\le\gb^2<2$).
Furthermore, choose $\gb^2/4<\gamma<1$ and $\eta$ so that
\[
 1-\gamma<\eta<4-\frac2n-\gb^2.
\]
Then $\eta<2-\fb$.
Fix $s_0<\infty$ and let $F$ be a bounded real $\mathcal F_{s_0}$-measurable random variable, and take $t\ge s\ge s_0$. From Eq.~\eqref{partition-expansion}, recall the formula
\[
\begin{aligned}
 Z_{\alpha,\beta,t}(F)=\E\left[Fe^{N_t^{(2n-1,\alpha,t)}-\langle N^{(2n-1,\alpha,t)}\rangle_t/2+Q_{t,t}^{(2n-1,\alpha)}}\right].
\end{aligned}
\]
Notice that we can assume $F\ge1$ without loss of generality, since we can always add a sufficiently large constant $c$ to real-valued $F$: $Z_{\alpha,\beta,t}(F)=Z_{\alpha,\beta,t}(F+c)-cZ_{\alpha,\beta,t}(1)$. Complex-valued $F$ can then be treated by considering the real and imaginary parts separately.

 Now define
\begin{equation*}
 V_F(s,t)\coloneqq
 \ln\E\left[
 F\1_{E_s}e^{N_s^{(2n-1,\alpha,t)}-\langle N^{(2n-1,\alpha,t)}\rangle_s/2+Q_{s,t}^{(2n-1,\alpha)}}
 \right],
 \qquad
 E_s\coloneqq\{H_s^{(\gamma)}<e^{\eta s}\},
\end{equation*}
where $H_s^{(\gamma)}$ is from Eq.~\eqref{Holder-increments}. We aim to show

\begin{equation}\label{estimateA}
\ln Z_{\alpha,\beta,t}(F)=V_F(s,t)+o_s(1)
\end{equation}
 uniformly in $t\ge s$. Theorem \ref{maintheorem} then follows from the next lemma and an extension argument to bounded \(\mathcal F_\infty\)-measurable functions.

\begin{lemma}\label{lem:fixed-scale-convergence}
For every sufficiently large fixed $s\ge s_0$, $V_F(s,t)$ converges to a finite limit as $t\to\infty$.
\end{lemma}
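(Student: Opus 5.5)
The plan is to fix $s\ge s_0$ large and prove that the exponent
\[
W_t\coloneqq N_s^{(2n-1,\alpha,t)}-\tfrac12\langle N^{(2n-1,\alpha,t)}\rangle_s+Q_{s,t}^{(2n-1,\alpha)}
\]
converges as $t\to\infty$, pointwise on $E_s$. We then show it is bounded above on $E_s$ uniformly in $t$, so that dominated convergence applies to $\E[F\1_{E_s}e^{W_t}]$. Since $F\ge1$, the limit of the expectation is strictly positive, and $V_F(s,\cdot)$ then has a finite limit.

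\textbf{Martingale terms.} For each $i\le 2n-1$ we use the representation~\eqref{martingale-representation} and split the $u$-integral into $u\le s$ and $u>s$. For $u>s$, the bracket $\cos(\gb Y_p(\x,s))e^{\fb\E[Y_p(\x,s)^2]}-1$ is independent of $u$. Combined with $e^{-\fb U_{i,p}(\x,u)}=e^{-\fb U_{i,p}(\x,s)}e^{-\fb U_{i,p}(\x,s,u)}$, this part becomes $\int_{\Lambda^i}\mathcal F^{(i,p)}(\x,s,t)\,\psi(\x)\,\mu(\dd\x)$. Here $\psi$ is, up to the deterministic factor $e^{\fb\E[Y_p^2]}$, the function from item 2 of Lemma~\ref{lem:cosine-test} evaluated at scale $s$. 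On $E_s$, that lemma places $\psi$ in $\mathcal D^\gamma_{i,p}$ with majorant $Ce^{\fb i s}e^{i\eta s}$, and it also gives~\eqref{unweighted-property} when $i=2p$. Lemma~\ref{lem:energy-multiplier} then adds the factor $e^{-\fb U_{i,p}(\x,s,u)}$ at the cost of $e^{\fb(i-1)(u-s)}$; its second item covers the neutral case. Proposition~\ref{prop:pairing} gives a $t$-independent bound $C_s e^{-(2-\fb)(i-1)u}$ that is integrable in $u$, together with pointwise convergence in $t$ of the integrand at each fixed $u$. Dominated convergence in $u$ therefore gives convergence of the $u>s$ part. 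The $u\le s$ part uses item 1 of Lemma~\ref{lem:cosine-test} at scale $u$ and is handled by Proposition~\ref{prop:pairing} directly. For each fixed sample path in $E_s$, the constants are deterministic multiples of powers of $e^{\eta s}$.

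\textbf{Brackets and $Q_{s,t}$.} The quadratic variations $\langle M^{(k,t)},M^{(l,t)}\rangle_s$ are, by~\eqref{A-bracket} (which is stated for the diagonal pairs $(i-j,j)$ and extends to general pairs by the same recursive computation as in~\eqref{recursive-identity}), integrals over $u\le s$ of $F^{(k+l,\cdot,p)}$ against the test functions of item 1 of Lemma~\ref{lem:cosine-test}. By~\eqref{recursive-identity}, they can equally be written in terms of $\mathcal F$ paired with the sine-type test functions of item 3. Proposition~\ref{prop:pairing} again gives, for each $u\le s$, a bound uniform in $t$ and convergence as $t\to\infty$. Because the $u$-range $[0,s]$ is compact, dominated convergence applies. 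Only finitely many pairs $(k,l)$ occur, with $k+l\le 4n-2$, and all constants are finite for fixed $s$. This yields convergence of $\langle N\rangle_s$ and of $Q_{s,t}$, and an upper bound on $|W_t|$ on $E_s$ that depends only on $s$.

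\textbf{Main obstacle.} The delicate step is the $u>s$ tail of $M_s^{(i,t)}$ for even $i=2p$ near the threshold. There, Lemma~\ref{lem:energy-multiplier}(2) is needed to prevent the neutral energy from producing the factor $e^{\fb i(u-s)}$ in place of $e^{\fb(i-1)(u-s)}$. One must also check that the exponent $2(i-1)-\fb(i-1)$ still dominates, which holds because $\gb^2<4$. A further point is that the $u$-integrability is uniform in $t$ only once the Hölder coefficient is controlled, and this is exactly why we restrict to $E_s$. Off $E_s$ nothing is required for this lemma, since $V_F$ includes the indicator. Finally, the constant $C_s$ may grow like $e^{C\eta s}$, which is harmless at fixed $s$; "sufficiently large $s$" is needed only so that $\P(E_s)>0$, and hence $V_F(s,t)>-\infty$.
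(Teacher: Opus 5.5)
You handle the scale-$s$ tail ($u>s$) of $M_s^{(i,t)}$ the same way the paper does, but the claimed uniform bound on $E_s$ fails. You bound separately the $u\le s$ part of each $M_s^{(i,t)}$, each bracket $\langle M^{(k,t)},M^{(l,t)}\rangle_s$, and each term of $Q^{(2n-1,\alpha)}_{s,t}$, using item~1 of Lemma~\ref{lem:cosine-test} at scales $u\in[0,s]$. This produces majorants $Ce^{\fb iu}(H_u^{(\gamma)})^{\lfloor i/2\rfloor}$ with $u<s$ and $i$ up to $4n-2$. The event $E_s=\{H_s^{(\gamma)}<e^{\eta s}\}$ controls the H\"older coefficient only at the single scale $s$. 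It says nothing about $H_u^{(\gamma)}$ for $u<s$, because the increment $X_s-X_u$ is independent of $\mathcal F_u$ and unconstrained on $E_s$. So your statement that ``the constants are deterministic multiples of powers of $e^{\eta s}$'' is false for these terms. Consequently you have no integrable dominating function for $F\1_{E_s}e^{W_t}$. You would need something like $\E[\1_{E_s}\exp(C\int_0^s(H_u^{(\gamma)})^{2n-1}\dd u)]<\infty$, whereas Lemma~\ref{lem:Holder-moments} only supplies polynomial moments.

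The missing idea is an exact algebraic cancellation that makes all of these random $u\le s$ terms disappear. Equation~\eqref{A-bracket} already covers every pair $(k,l)$ with $k+l=i$, not only ``diagonal'' ones. Since $Q^{(2n-1,\alpha)}_{s,t}$ removes exactly the orders $k+l\ge 2n$ from $\frac12\langle N^{(2n-1,\alpha,t)}\rangle_s$, you get
$-\frac12\langle N^{(2n-1,\alpha,t)}\rangle_s+Q^{(2n-1,\alpha)}_{s,t}=-\sum_{i=2}^{2n-1}\alpha^i\sum_p\int_0^s\int_{\Lambda^i}F^{(i,p)}(\x,u,t)e^{\fb\sum_aK_u(x_a,x_a)}\cos(\gb Y_p(\x,u))\,\mu(\dd\x)\,\dd u$.
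Because $e^{-\fb U_{i,p}(\x,u)}e^{\fb\E[Y_p(\x,u)^2]}=e^{\fb\sum_aK_u(x_a,x_a)}$, this cancels exactly the random part of the $u\le s$ piece of $\sum_i\alpha^iM_s^{(i,t)}$. What remains is the deterministic integral $-\sum_i\alpha^i\sum_p\int_0^s\int F^{(i,p)}(\x,u,t)e^{-\fb U_{i,p}(\x,u)}\mu(\dd\x)\dd u$, which Proposition~\ref{prop:pairing} and Lemma~\ref{lem:energy-multiplier} control. The exponent is then the sum of three pieces:
\begin{itemize}
\item $\alpha(M_s-\mu(\Lambda))$, bounded by $Ce^{\fb s}$;
\item your scale-$s$ term, controlled on $E_s$;
\item this deterministic integral.
\end{itemize}
This yields a genuinely deterministic $C_s$ on $E_s$, and dominated convergence closes the argument, as in \eqref{fixed-scale-exponent}--\eqref{uniformestimate}.
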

\begin{proof}
Fix $s$ sufficiently large so that $\P(E_s)>0$.
Using Eqs.~\eqref{remainder-definition}, \eqref{A-representation}, \eqref{A-bracket} and~\eqref{martingale-representation}, together with the definitions of $M^{(i,t)}$, we have for the exponent in the definition of $V_F(s,t)$
\begin{equation}\label{fixed-scale-exponent}
\begin{aligned}
 &N_s^{(2n-1,\alpha,t)}-\frac12\langle N^{(2n-1,\alpha,t)}\rangle_s+Q_{s,t}^{(2n-1,\alpha)}
 =\alpha(M_s-\mu(\Lambda))\\
 &\quad+
 \sum_{i=2}^{2n-1}\alpha^i
 \sum_{p=0}^{\lfloor i/2\rfloor}
 \int_{\Lambda^i}\mathcal F^{(i,p)}(\x,s,t)
 e^{\fb\sum_{a=1}^iK_s(x_a,x_a)}
 \left[
 \cos(\gb Y_{p}(\x,s))
 -e^{-\fb\E[(Y_{p}(\x,s))^2]}
 \right]\mu(\dd\x)\\
 &\quad-
 \sum_{i=2}^{2n-1}\alpha^i
 \sum_{p=0}^{\lfloor i/2\rfloor}
 \int_0^s\int_{\Lambda^i}
 F^{(i,p)}(\x,u,t)e^{-\fb U_{i,p}(\x,u)}
 \mu(\dd\x)\dd u.
\end{aligned}
\end{equation}
The first term is independent of $t$. For the first integral, applying
Lemma~\ref{lem:cosine-test}, item~2, and
Lemma~\ref{lem:energy-multiplier}, we have the upper bound
\[
 \begin{aligned}
 &C e^{\fb i s}(H_s^{(\gamma)})^i
 \int_s^\infty
 e^{-2(i-1)u}e^{\fb(i-1)(u-s)}\dd u\le 
 C e^{[\fb i-2(i-1)]s}
 (H_s^{(\gamma)})^i
 <\infty.
 \end{aligned}
\]
Then dominated convergence and Eq.~\eqref{pairing-limit} imply the limit $t\to\infty$ exists for the term.
For the last integral in Eq.~\eqref{fixed-scale-exponent},
Lemma~\ref{lem:general-energy-multiplier}, applied to a constant test function, shows that
$e^{-\fb U_{i,p}(\x,u)}$ is a test with
majorant at most $Ce^{\fb i u}$.
For each fixed $u$, Eq.~\eqref{pairing-limit} gives convergence of the spatial integral. Proposition~\ref{prop:pairing} bounds its absolute value by $Ce^{[\fb i-2(i-1)]u}$, which is integrable over $[0,s]$. Dominated convergence therefore applies. Consequently, 
Eq.~\eqref{fixed-scale-exponent} converges almost surely. Moreover, the same estimates and the condition
$H_s^{(\gamma)}<e^{\eta s}$ on $E_s$ give
\begin{equation}\label{uniformestimate}
 \sup_{t\ge s}
 \left|N_s^{(2n-1,\alpha,t)}-\frac12\langle N^{(2n-1,\alpha,t)}\rangle_s+Q_{s,t}^{(2n-1,\alpha)}\right|
 \le C_s
 \qquad\text{on }E_s.
\end{equation}
Dominated convergence now shows that for sufficiently large fixed $s\ge0$
\[
 e^{V_F(s,t)}
 =
 \E\left[
 F\1_{E_s}
 e^{N_s^{(2n-1,\alpha,t)}-\langle N^{(2n-1,\alpha,t)}\rangle_s/2+Q_{s,t}^{(2n-1,\alpha)}}
 \right]
\]
converges in $t$. Its limit lies between
$e^{-C_s}\P(E_s)$ and $\|F\|_\infty e^{C_s}\P(E_s)$, so $V_F(s,t)$
converges to a finite limit for each sufficiently large $s$.
\end{proof}
We now move to proving Eq.~\eqref{estimateA} by bounding the ratio $e^{V_F(s,t)}/Z_{\alpha,\beta,t}(F)$ from above and below separately.

\begin{lemma}\label{lem:partition-lower-bound}

For every $\alpha\in\R$ and uniformly in $t\ge s$,
\begin{enumerate}
\item
\[
 \frac{e^{V_F(s,t)}}{Z_{\alpha,\beta,t}(F)}\le1+o_s(1)
\]
\item
\[
 \frac{e^{V_F(s,t)}}{Z_{\alpha,\beta,t}(F)}\ge1-o_s(1)
\]
\end{enumerate}
\end{lemma}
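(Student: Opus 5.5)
Write $\mathcal E_s^t\coloneqq N_s^{(2n-1,\alpha,t)}-\frac12\langle N^{(2n-1,\alpha,t)}\rangle_s+Q_{s,t}^{(2n-1,\alpha)}$ for the exponent at scale $s$. The plan is to condition on $\mathcal F_s$. Since $F$ and $\1_{E_s}$ are $\mathcal F_s$-measurable, the tower property gives
\[
Z_{\alpha,\beta,t}(F\1_{E_s})=\E\Bigl[F\1_{E_s}e^{\mathcal E_s^t}\,R_{s,t}\Bigr],\qquad
R_{s,t}\coloneqq\E\Bigl[e^{(N_t-N_s)-\frac12(\langle N\rangle_t-\langle N\rangle_s)+(Q_{t,t}-Q_{s,t})}\,\Big|\,\mathcal F_s\Bigr],
\]
with the superscripts on $N$ and $Q$ suppressed. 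Both items then reduce to two claims:
\begin{enumerate}
\item $R_{s,t}=1+o_s(1)$ uniformly on $E_s$ and in $t\ge s$;
\item the contribution of $E_s^c$ is negligible, i.e.\ $|Z_{\alpha,\beta,t}(F\1_{E_s^c})|=o_s(1)\,Z_{\alpha,\beta,t}(F)$.
\end{enumerate}
The second claim needs $Z_{\alpha,\beta,t}(F)\ge Z_{\alpha,\beta,t}(1)$ (using $F\ge1$) to stay bounded below uniformly in $t$. I would get this from Jensen, $Z_{\alpha,\beta,t}(1)\ge e^{\E[\mathcal E^t_t]}$, together with the bounds of Proposition~\ref{prop:pairing}. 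Item 1 then follows from $R\le1+o_s(1)$ and $F\1_{E_s}\le F$. Item 2 uses $R\ge1-o_s(1)$ and the $E_s^c$ estimate.

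For claim 1, the term $Q_{t,t}-Q_{s,t}$ is the integral over $u\in[s,t]$ of brackets $\langle M^{(k,t)},M^{(i-k,t)}\rangle$ with $i\ge2n$. By Eq.~\eqref{A-bracket} and Lemma~\ref{lem:cosine-test}(1), these are pairings of $F^{(i,j,p)}$ against test functions with majorant $Ce^{\fb iu}(H_u^{(\gamma)})^{\lfloor i/2\rfloor}$. Proposition~\ref{prop:pairing} bounds each by
\[
e^{[\fb i-2(i-1)]u}(H_u^{(\gamma)})^{i},
\]
which is summable in $u$ precisely because $i\ge2n$ and $\gb^2<4-2/n$. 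The choice $\eta<4-2/n-\gb^2$ is what absorbs $(H_u^{(\gamma)})^i\le e^{i\eta u}$ on the events $\{H_u^{(\gamma)}<e^{\eta u}\}$, and $\eta>1-\gamma$ makes $\P(H_u^{(\gamma)}\ge e^{\eta u})$ summably small by Lemma~\ref{lem:Holder-moments}. The martingale part $e^{(N_t-N_s)-\frac12(\langle N\rangle_t-\langle N\rangle_s)}$ has conditional mean $1$ provided Novikov-type control holds. So $R_{s,t}-1$ is bounded by the conditional expectation of $(e^{|Q_{t,t}-Q_{s,t}|}-1)$ weighted by this exponential martingale. I would control it by H\"older with a slightly larger exponent and the tilted moment bound Eq.~\eqref{Holder-moment-unnormalized}. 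This gives $o_s(1)$ on the good event, plus the bad scales $u\ge s$ where $H_u^{(\gamma)}$ is large, which are again summable.

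For claim 2, bound $\E[F\1_{E_s^c}e^{\mathcal E_t^t}]$ by Cauchy--Schwarz, giving a factor $\P(E_s^c)^{1/2}\le C e^{-c s}$ by Markov and Eq.~\eqref{Holder-moment-n} (or Eq.~\eqref{Holder-moment-unnormalized} for $\alpha<0$). The other factor, $\E[e^{2\mathcal E_t^t}]$, must be bounded uniformly in $t$. I would use $e^{2\mathcal E}=e^{2N-2\langle N\rangle}\,e^{\langle N\rangle+2Q}$ and reduce to exponential moments of $\langle N\rangle_t$ and $Q_{t,t}$. These are again pairings controlled through Proposition~\ref{prop:pairing} with random majorants in $H_u^{(\gamma)}$.

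The main obstacle is exactly this uniform exponential integrability. The bounds in $H_u^{(\gamma)}$ are polynomial, but they are exponentiated and sit under a tilted measure. I would first try a stopping argument: stop at the first $u$ with $H_u^{(\gamma)}\ge e^{\eta u}$. Before this time the brackets are deterministically bounded by $\sum_u e^{-\delta u}$, so exponential moments are trivial. The stopped-off event is handled by the Gaussian domination Eq.~\eqref{Gaussian-domination}, which transfers the Gaussian tails of $H_u^{(\gamma)}$ to the tilted measure uniformly in $\alpha$. A union bound over integer scales $u\ge s$ then gives the $o_s(1)$.
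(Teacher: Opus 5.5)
\textbf{There is a genuine gap.} For $\gb^2\ge2$ the renormalized density is bounded in $L^1$ but not in $L^2$ uniformly in $t$. So the steps where you separate it from a small event by Cauchy--Schwarz, or bound $Z$ from below by Jensen, break down.

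Since $e^{\mathcal E_t^t}=e^{\alpha M_t}e^{-\sum_{k\le 2n-1}\frac{\alpha^k}{k!}\mathcal C_k(M_t)}$, you have
\[
\E\bigl[e^{2\mathcal E_t^t}\bigr]=Z_{2\alpha,\gb,t}(1)\exp\Bigl(\sum_{k=1}^{2n-1}\frac{(2^k-2)\alpha^k}{k!}\mathcal C_k(M_t)\Bigr).
\]
For $\gb^2\in[2,3)$ the exponent is $\alpha^2\mathcal C_2(M_t)+\alpha^3\mathcal C_3(M_t)$. Here $\mathcal C_3(M_t)$ converges but $\mathcal C_2(M_t)=\mathrm{Var}(M_t)\to\infty$, while $Z_{2\alpha,\gb,t}(1)$ has a positive limit. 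Likewise, Jensen only gives $Z_{\alpha,\gb,t}(1)\ge\exp\bigl(-\frac{\alpha^2}{2}\mathcal C_2(M_t)-\frac{\alpha^3}{6}\mathcal C_3(M_t)\bigr)\to0$. So claim 2 cannot be obtained this way.

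The step ``H\"older with a slightly larger exponent'' for $R_{s,t}$ meets the same obstruction. It needs $L^p$ bounds, $p>1$, on $\mathcal M_{s,t}=e^{(N_t-N_s)-\frac12(\langle N\rangle_t-\langle N\rangle_s)}$. But $N$ contains $\alpha M$, whose bracket on $[s,t]$ has expectation $\mathrm{Var}(M_t)-\mathrm{Var}(M_s)\to\infty$.

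The remedy is never to split the density off a bad event. Instead, measure bad events under $\P_{\alpha,t}$, using that $e^{\mathcal E_t^t}$ is a deterministic multiple of $e^{\alpha M_t}$. For example, for $F\ge1$ one has $Z_{\alpha,\gb,t}(F\1_{E_s^c})\le\|F\|_\infty\P_{\alpha,t}(E_s^c)\,Z_{\alpha,\gb,t}(F)$. For item 2 the paper writes the ratio as $\E_{\alpha,t}[F\1_{E_s}e^{Q_{s,t}-Q_{t,t}}]/\E_{\alpha,t}[F]$ and applies $e^{-x}\ge1-x$. This reduces everything to $\P_{\alpha,t}(E_s^c)$ and $\E_{\alpha,t}|Q_{t,t}-Q_{s,t}|$, that is, to polynomial moments of $H_u^{(\gamma)}$ under the tilt, Eq.~\eqref{Holder-moment-n}. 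No exponential integrability and no upper bound on $R_{s,t}$ are needed. For $\alpha<0$ these tilted moments require $\sup_{t\ge s}\E[e^{|\alpha|M_t}]/\E[e^{\alpha M_t}]<\infty$. The paper gets this from item 1, Lemma~\ref{lem:fixed-scale-convergence} and Theorem~\ref{thm:cumulants}, not from Jensen.

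\textbf{You also have the directions swapped, and the hard direction is missing its key idea.} Since $Z_{\alpha,\gb,t}(F\1_{E_s^c})\ge0$, item 1 follows from $R_{s,t}\ge1-o_s(1)$ on $E_s$. It is item 2 that uses $R_{s,t}\le1+o_s(1)$.

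The lower bound on $R_{s,t}$ is where the real work lies. It requires that, under $\mathcal M_{s,t}$ given $\mathcal F_s$, the field keeps the H\"older control $|X_u(x)-X_u(y)|\lesssim e^{\eta u}(e^u|x-y|)^\gamma$ for all $u\in[s,t]$. This must hold with probability $1-o_s(1)$, uniformly on $E_s$.

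Gaussian domination and Eq.~\eqref{Holder-moment-unnormalized} only concern the unconditional tilt $e^{\alpha M_t}\dd\P$. Since $e^{\mathcal E_s^t}\mathcal M_{s,t}=e^{\mathcal E_t^t}e^{-(Q_{t,t}-Q_{s,t})}$, the relevant measure differs from that tilt by a factor that is uncontrolled precisely on the bad event. Your stopping time $\tau$ only reduces matters to bounding $\Q(\tau\le t\mid\mathcal F_s)$, which is the same problem.

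The missing ingredient is Girsanov.
\begin{enumerate}
\item Under $\Q$, $X_u(x)=R_u(x)+\int_0^u\dd\langle X(x),N\rangle_v$, where $R$ is centred Gaussian with covariance $K$ and its increments after $s$ are independent of $\mathcal F_s$. Hence the event $S^R_{s,t}$ (H\"older control of $R_u-R_s$) has $\Q$-probability at least $1-Ce^{-(\eta+\gamma-1)s}$.
\item The drift's spatial increments are bounded by powers of $H_u^{(\gamma)}$ via Lemma~\ref{lem:cosine-test}, item 3, and Proposition~\ref{prop:pairing}; see Eq.~\eqref{upper-drift}.
\item A continuity/bootstrap argument on a maximal time $t_0$ closes this circularity and yields $E_s\cap S^R_{s,t}\subset S_{s,t}$.
\end{enumerate}
On $S_{s,t}$ one has $|Q_{t,t}-Q_{s,t}|\le Ce^{-cs}$. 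Hence $R_{s,t}\ge e^{-Ce^{-cs}}\Q(S^R_{s,t})$ on $E_s$, which is item 1.
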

\begin{proof}[Proof of item 1]
By Eq.~\eqref{remainder-definition}, $|Q_{t,t}^{(2n-1,\alpha)}-Q_{s,t}^{(2n-1,\alpha)}|$ is bounded above by
a finite sum of terms of the following form, for
$2n\le k\le4n-2$, $k-2n+1\le l\le2n-1$ and
$0\le p\le\lfloor k/2\rfloor$:
\begin{equation}\label{remainder-integral-bound}
 \int_s^t\left|
 \int_{\Lambda^k}F^{(k,l,p)}(\x,u,t)
 e^{\fb\sum_{a=1}^kK_u(x_a,x_a)}
 \cos\left(\gb Y_{p}(\x,u)\right)\mu(\dd\x)
 \right|\dd u.
\end{equation}
 Define
\[
 S_{s,t}\coloneqq\{
 |X_u(x)-X_u(y)|
 \le3e^{\eta u}(e^u|x-y|)^\gamma,
 \ s\le u\le t,\ x,y\in \Lambda
 \}.
\]
On this event, tracing the proof of
Lemma~\ref{lem:cosine-test}, item~1, we obtain for the term
in Eq.~\eqref{remainder-integral-bound}
the upper bound
\begin{equation*}
 C\int_s^t e^{-2(k-1)u}e^{\fb ku}e^{\frac{k}{2}\eta u}\dd u
 \le C e^{-\left(2(k-1)-\frac{k}{2}(\gb^2+\eta)\right)s}.
\end{equation*}
Thus, on $S_{s,t}$,
\[
 |Q_{t,t}^{(2n-1,\alpha)}-Q_{s,t}^{(2n-1,\alpha)}|
 \le Ce^{-2n\left(2(1-\frac{1}{2n})
                    -\frac12(\gb^2+\eta)\right)s},
\]
and
\begin{align}
 Z_{\alpha,\beta,t}(F)
 &\ge
 \E\left[
 F\1_{E_s\cap S_{s,t}}
 e^{N_t^{(2n-1,\alpha,t)}-\frac12\langle N^{(2n-1,\alpha,t)}\rangle_t+Q_{t,t}^{(2n-1,\alpha)}}
 \right]\notag\\
 &\ge
 \E\left[
 F\1_{E_s\cap S_{s,t}}
 e^{N_t^{(2n-1,\alpha,t)}-\frac12\langle N^{(2n-1,\alpha,t)}\rangle_t+Q_{s,t}^{(2n-1,\alpha)}}
 \right]
 \exp\left(
 -Ce^{-2n\left(2(1-\frac{1}{2n})
                  -\frac12(\gb^2+\eta)\right)s}
 \right).
 \label{upper-Z-restriction}
\end{align}

The rest of the proof concerns estimating the remaining expectation. To this end, define for fixed $t\ge0$
\[
 \frac{\dd\Q}{\dd\P}
 =e^{N_t^{(2n-1,\alpha,t)}-\langle N^{(2n-1,\alpha,t)}\rangle_t/2},
 \qquad
 R_u(x)\coloneqq X_u(x)-\int_0^u\dd\langle X(x),N^{(2n-1,\alpha,t)}\rangle_v,
 \quad 0\le u\le t.
\]
By Girsanov's theorem, $R_u(x)$ is a continuous local
martingale under $\Q$, and
$\langle R(x),R(y)\rangle_u=K_u(x,y)$.
That is, $R$ is centred Gaussian under $\Q$, with covariance
$K_{u\wedge v}(x,y)$. Let
\[
 S^R_{s,t}\coloneqq\{
 |R_u(x)-R_u(y)-R_s(x)+R_s(y)|
 \le e^{\eta u}(e^u|x-y|)^\gamma,
 \ s\le u\le t,\ x,y\in \Lambda
 \}.
\]
We aim to show that
\begin{align}
\Q(S^R_{s,t})&\ge1-Ce^{-(\eta+\gamma-1)s},\label{gaussian-event-probability}\\
E_s\cap S^R_{s,t}&\subset S_{s,t}\label{drift-event-inclusion}.
\end{align}
It follows from Eqs.~\eqref{drift-event-inclusion} and~\eqref{upper-Z-restriction} that 
\begin{align*}
	Z_{\alpha,\beta,t}(F)&\ge\  \E\left[
 F\1_{E_s\cap S^R_{s,t}}
 e^{N_t^{(2n-1,\alpha,t)}-\frac12\langle N^{(2n-1,\alpha,t)}\rangle_t+Q_{s,t}^{(2n-1,\alpha)}}
 \right]
 \exp\left(
 -Ce^{-2n\left(2(1-\frac{1}{2n})
                  -\frac12(\gb^2+\eta)\right)s}
 \right)\\
 &=\Q(S^R_{s,t})e^{V_F(s,t)}\exp\left(
 -Ce^{-2n\left(2(1-\frac{1}{2n})
                  -\frac12(\gb^2+\eta)\right)s}
 \right),
\end{align*}
where in the equality we used the fact that $S^R_{s,t}$ is independent of $\mathcal{F}_s$ under $\Q$ and the martingale property of $\bigl(e^{N_u^{(2n-1,\alpha,t)}-\langle N^{(2n-1,\alpha,t)}\rangle_u/2}\bigr)_{0\le u\le t}$. Then Eq.~\eqref{gaussian-event-probability} and our parameter choices for $\eta,\gamma$ prove item 1.

Starting from Eq.~\eqref{gaussian-event-probability}, we make the observation that $(R_u-R_s)_{s\le u\le t}$ under $\Q$ has the same law as $(X_u-X_s)_{s\le u\le t}$ under $\P$. Therefore, it suffices to show the similar statement for $X$ under $\P$. The martingale property gives, for $s\le u\le v$,
\[
 |X_u(x)-X_u(y)|
 \le(e^v|x-y|)^\gamma
       \E[H_v^{(\gamma)}\mid\mathcal F_u].
\]
Multiplying both sides by $e^{-(\gamma+\eta)v}$ and
integrating $v$ over $[u,\infty)$, we obtain
\[
 |X_u(x)-X_u(y)|
 \le(\gamma+\eta)e^{(\gamma+\eta)u}|x-y|^\gamma
 \E\left[
 \int_s^\infty e^{-\eta v}H_v^{(\gamma)}\dd v
 \,\middle|\,\mathcal F_u
 \right].
\]
By Doob's inequality and Eq.~\eqref{Holder-moment-n},
applied with $\alpha=0$,
\[
 \P\left(
 \sup_{s\le u\le t}
 \E\left[
 \int_s^\infty e^{-\eta v}H_v^{(\gamma)}\dd v
 \,\middle|\,\mathcal F_u
 \right]
 \ge\frac{1}{2(\gamma+\eta)}
 \right)
 \le Ce^{-s(\eta+\gamma-1)}.
\]
Outside this event,
\[
 |X_u(x)-X_u(y)|
 \le\frac12e^{(\gamma+\eta)u}|x-y|^\gamma,
 \qquad s\le u\le t,\quad x,y\in \Lambda,
\]
and hence, simultaneously for these $u,x,y$,
\[
 |X_u(x)-X_u(y)-X_s(x)+X_s(y)|
 \le e^{\eta u}(e^u|x-y|)^\gamma.
\]
It follows that
\[
\P\bigg(\{|X_u(x)-X_u(y)-X_s(x)+X_s(y)|
 \le e^{\eta u}(e^u|x-y|)^\gamma,\ s\le u\le t,\ x,y\in \Lambda\}\bigg)\ge 1-Ce^{-s(\eta+\gamma-1)}
\]
proving Eq.~\eqref{gaussian-event-probability}. To prove Eq.~\eqref{drift-event-inclusion}, we first estimate $\dd\langle X(z)-X(y),N^{(2n-1,\alpha,t)}\rangle_u$. It\^o's formula gives
\[
 \dd\langle X(z),\cos(\gb Y_{p}(\x,\cdot))\rangle_u
 =-\gb\sin(\gb Y_{p}(\x,u))
 \sum_{a=1}^i\la_a^{(p)}Q_u(z,x_a)\dd u.
\]
Consequently,
\begin{multline}\label{firstd}
 \dd\langle X(z),N^{(2n-1,\alpha,t)}\rangle_u
 =-\gb\sum_{i=1}^{2n-1}\alpha^i
 \sum_{p=0}^{\lfloor i/2\rfloor}
 \int_{\Lambda^i}\mathcal F^{(i,p)}(\x,u,t)
 e^{\fb\sum_{a=1}^iK_u(x_a,x_a)}
 \sin(\gb Y_{p}(\x,u))\\
 \times\left(\sum_{a=1}^i\la_a^{(p)}Q_u(z,x_a)\right)
 \mu(\dd\x)\dd u.
\end{multline}
For $i\ge2$, using
Lemma~\ref{lem:cosine-test}, item~3,
Lemma~\ref{lem:energy-multiplier}, and
Proposition~\ref{prop:pairing} gives for each contribution
to the bracket density the upper bound
\[
 C(H_u^{(\gamma)})^i e^{\fb iu}e^{-2u}
 \int_u^t e^{-2(i-1)v}e^{\fb(i-1)(v-u)}\dd v
 \le C(H_u^{(\gamma)})^i e^{-ui(2-\fb)}.
\]
For $i=1$, since $\mathcal F^{(1,0)}=1$, the same conclusion
follows directly from $|\sin|\le1$ and Eq.~\eqref{laderive}
\[
 e^{\fb u}\sup_{z\in \Lambda}\int_\Lambda |Q_u(z,x)|\mu(\dd x)
 \le Ce^{-(2-\fb)u}.
\]
Therefore we have
\begin{equation*}
 \left|\dd\langle X(z),N^{(2n-1,\alpha,t)}\rangle_u\right|
 \le C\sum_{i=1}^{2n-1}(H_u^{(\gamma)})^i
 e^{-iu(2-\fb)}\dd u.
\end{equation*}
Now for $\dd\langle X(z)-X(y),N^{(2n-1,\alpha,t)}\rangle_u$, we use Eq.~\eqref{firstd}, apply the mean value theorem to the differences $Q_u(z,x_a)-Q_u(y,x_a)$, and invoke 
the derivative assertion in Lemma~\ref{lem:cosine-test} to obtain
\begin{equation}\label{upper-drift}
 \left|\dd\langle X(z)-X(y),N^{(2n-1,\alpha,t)}\rangle_u\right|
 \le C\sum_{i=1}^{2n-1}(H_u^{(\gamma)})^i
 e^{-iu(2-\fb)}(1\wedge e^u|z-y|)\dd u.
\end{equation}
For fixed $t\ge s$, write
\[
 A_u(x)\coloneqq\int_s^u\dd\langle X(x),N^{(2n-1,\alpha,t)}\rangle_v.
\]
By Eq.~\eqref{upper-drift}, for $s\le  u\le t$,

\[
 |A_u(x)-A_u(y)|\le C|x-y|^\gamma
 \int_s^u\sum_{i=1}^{2n-1}(H_v^{(\gamma)})^i
 e^{-i(2-\fb)v+\gamma v}\dd v.
\]
The integrand is almost surely integrable over $[s,t]$ by
Eq.~\eqref{Holder-moment-n} with $\alpha=0$ and Fubini's theorem, implying $A_u(x)-A_u(y)$ is continuous in $u$.
We may choose $t_0\in(s,t]$ such that
\[
C\int_s^{t_0}\sum_{i=1}^{2n-1}(H_r^{(\gamma)})^i
e^{-i(2-\fb)r+\gamma r}\dd r
\le e^{(\eta+\gamma)s}.
\]
Then for $s\le v\le t_0$ and $x,y\in \Lambda$,
\[
|A_v(x)-A_v(y)|
\le e^{\eta v}(e^v|x-y|)^\gamma.
\]
We let $t_0$ now denote the maximal time up to which this bound
holds for every $x,y\in \Lambda$, the preceding argument giving
$t_0>s$. By continuity of $A_v(x)-A_v(y)$ the bound is satisfied at $v=t_0$ as well. On $E_s\cap S^R_{s,t}$ and $s\le v\le t_0$ the decomposition
\[
\begin{aligned}
 X_v(x)-X_v(y)
 &=X_s(x)-X_s(y)+R_v(x)-R_v(y)-R_s(x)+R_s(y)
 +A_v(x)-A_v(y)
\end{aligned}
\]
gives $|X_v(x)-X_v(y)|\le3e^{\eta v}(e^v|x-y|)^\gamma$, implying $E_s\cap S^R_{s,t}\subset S_{s,t_0}$.
Repeating the proof of Eq.~\eqref{upper-drift} with this coefficient
in the estimates of Lemma~\ref{lem:cosine-test}, and using
$2-\fb-\eta>0$, gives
\[
\begin{aligned}
 |A_v(x)-A_v(y)|
 &\le C(e^v|x-y|)^\gamma
 \int_s^v e^{-(2-\fb-\eta)u}\dd u\\
 &\le Ce^{-(2-\fb-\eta)s}(e^v|x-y|)^\gamma\\
 &\le\tfrac12 e^{\eta v}(e^v|x-y|)^\gamma.
\end{aligned}
\]
This improves the coefficient in the original estimate from 1 to 1/2, so $t_0=t$.
\end{proof}
\begin{proof}[Proof of item 2]
We will first prove the lower bound in the case $\alpha\ge0$. Conditioning the stochastic exponential at time $s$ gives
\begin{equation*}
 \frac{e^{V_F(s,t)}}{Z_{\alpha,\beta,t}(F)}
 =\frac{\E_{\alpha,t}\left[
 F\1_{E_s}e^{Q_{s,t}^{(2n-1,\alpha)}-Q_{t,t}^{(2n-1,\alpha)}}
 \right]}{\E_{\alpha,t}[F]}.
\end{equation*}
Using $e^{-x}\ge1-x$ and $1\le F\le\|F\|_\infty$,
\begin{equation}\label{ratio-lower-bound}
\begin{aligned}
 \frac{e^{V_F(s,t)}}{Z_{\alpha,\beta,t}(F)}
 &\ge1-\|F\|_\infty\P_{\alpha,t}(E_s^c)-\|F\|_\infty\E_{\alpha,t}[|Q_{t,t}^{(2n-1,\alpha)}-Q_{s,t}^{(2n-1,\alpha)}|].
\end{aligned}
\end{equation}
By Markov's inequality and Eq.~\eqref{Holder-moment-n},
\[
 \P_{\alpha,t}(E_s^c)
 \le e^{-\eta s}\E_{\alpha,t}[H_s^{(\gamma)}]
 \le Ce^{-s(\eta-1+\gamma)}.
\]
Taking the expectation in Eq.~\eqref{remainder-integral-bound} with
respect to $\E_{\alpha,t}$, after using Lemma~\ref{lem:cosine-test}, item~1, and using Eq.~\eqref{Holder-moment-n}
gives
\[
\E_{\alpha,t}[|Q_{t,t}^{(2n-1,\alpha)}-Q_{s,t}^{(2n-1,\alpha)}|]
 \le Ce^{-[2(2n-1)-n(\gb^2+1-\gamma)]s}.
\]
The exponent is positive by our parameter choice. Hence
\begin{equation*}
 \frac{e^{V_F(s,t)}}{Z_{\alpha,\beta,t}(F)}\ge1-o_s(1)
\end{equation*}
uniformly in $t\ge s$. Then let $\alpha<0$. Item 1 of this Lemma and Lemma~\ref{lem:fixed-scale-convergence} give $\inf_{t\ge s}Z_{\alpha,\beta,t}(1)>0$ for sufficiently large fixed $s$.
By the case $\alpha\ge0$ proved above and Theorem~\ref{thm:cumulants},
\[
 \frac{\E[e^{|\alpha|M_t}]}{\E[e^{\alpha M_t}]}
 =\frac{\E[e^{|\alpha|M_t}]
 e^{-\sum_{i=1}^{2n-1}\frac{|\alpha|^i}{i!}\mathcal C_i(M_t)}}
 {Z_{\alpha,\beta,t}(1)}
 \exp\left(2\sum_{\substack{1\le i\le2n-1\\ i\text{ odd}}}
 \frac{|\alpha|^i}{i!}\mathcal C_i(M_t)\right)
 \le C,\qquad t\ge s.
\]
Dividing Eq.~\eqref{Holder-moment-unnormalized} by
$\E[e^{\alpha M_t}]$ and using this bound gives
Eq.~\eqref{Holder-moment-n} also for $\alpha<0$,
with a constant depending on $\alpha$.
Thus the estimates following Eq.~\eqref{ratio-lower-bound}
apply unchanged.
\end{proof}

\subsection*{Statements and Declarations}

\subsubsection*{Funding}
This work was supported by the Swiss National Science Foundation (SNSF) grant 207410, ``2d constructive field theory with exponential interactions''.

\subsubsection*{Competing interests}
The authors have no competing interests to declare that are relevant to the content of this article.

\subsubsection*{Data availability}
Data sharing not applicable to this article as no datasets were generated or analysed during the current study.

\appendix
\section{Algebra of disjoint coordinate swaps}\label{app:swap-algebra}

Let $M\in\Omega_n$. For $A\subseteq M$, use the notation $D_A$ and $\tau_A$ introduced before Definition~\ref{TestDef}.
The swaps in $M$ are disjoint, so they commute. In the following, all identities are pointwise and we do not assume any regularity for functions $f,g$.

\begin{lemma}\label{lem:swap-subsets}
For $A\subseteq M$ and any function $f$,
\begin{align}
 D_M f&=\sum_{B\subseteq M}(-1)^{|B|}\tau_B f,\notag
 \\
 D_M f&=\sum_{B\subseteq M\setminus A}
 (-1)^{|B|}\tau_BD_Af.
 \label{app:partial-subset-expansion}
\end{align}
\end{lemma}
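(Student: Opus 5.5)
The two identities are pure algebra of commuting involutions, so the plan is to expand the product defining $D_M$ directly and then regroup. Since the pairs in $M$ are disjoint, the operators $\tau_a$, $a\in M$, pairwise commute and each satisfies $\tau_a^2=\mathrm{Id}$. Hence for $B,B'\subseteq M$ with $B\cap B'=\emptyset$ we have $\tau_B\tau_{B'}=\tau_{B\cup B'}$, and the operators $D_a=1-\tau_a$ also commute. These facts only use the action of permutations on coordinates, $(\tau_a f)(\x)=f(\tau_a\x)$, so no regularity of $f$ enters. The composition order of operators acting by precomposition with commuting permutations is harmless.

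For the first identity, I expand
\[
D_M=\prod_{a\in M}(1-\tau_a)
\]
as a product of commuting operators. Choosing from each factor either $1$ or $-\tau_a$ gives one term for each subset $B\subseteq M$, namely the set of factors where $-\tau_a$ was chosen. That term equals $(-1)^{|B|}\prod_{a\in B}\tau_a=(-1)^{|B|}\tau_B$. This is just the distributive law, and can be done formally by induction on $|M|$: removing one pair $\theta$, write $D_M=(1-\tau_\theta)D_{M\setminus\{\theta\}}$ and split the subsets of $M$ according to whether they contain $\theta$.

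For the second identity, I factor $D_M=D_{M\setminus A}\,D_A$, which is valid since the factors commute and $A$ and $M\setminus A$ partition $M$. Applying the first identity to the set $M\setminus A$ (also an element of $\Omega_n$) with the function $D_Af$ in place of $f$ gives
\[
D_Mf=\sum_{B\subseteq M\setminus A}(-1)^{|B|}\tau_B D_A f,
\]
which is \eqref{app:partial-subset-expansion}.

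The only point needing care, and it is minor, is the convention $(\tau_B g)(\x)=g(\tau_B\x)$. One must check that $\tau_B D_A f$ means $D_Af$ evaluated at $\y_B=\tau_B\x$, matching its use in \eqref{swap-expansion}. One must also check that commutation survives composition, since $\tau_a\tau_b$ acting on functions corresponds to composing the coordinate permutations in the reverse order. Disjointness makes these permutations commute, so the order is irrelevant. The edge cases $M=\emptyset$ or $A=M$ follow from the conventions $D_\emptyset=\tau_\emptyset=\mathrm{Id}$.
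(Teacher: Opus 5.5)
Your proposal is correct and follows essentially the same route as the paper: the first identity comes from expanding the commuting product $\prod_{a\in M}(1-\tau_a)$, made formal by induction on $|M|$ by peeling off one pair, and the second from factoring $D_M=D_{M\setminus A}D_A$ and expanding only the first factor. Your remarks on why disjoint swaps commute as operators, and on the edge cases $M=\emptyset$ and $A=M$, are accurate and consistent with the paper's conventions.
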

\begin{proof}
If $M=\{m\}$ we simply write $D_m=\tau_\emptyset-\tau_m$. By induction, if $|M|>1$, we rewrite 
\[
D_Mf=D_mD_{M\backslash\{m\}}f =\sum_{B\subseteq M\backslash\{m\}}(-1)^{|B|}\tau_B f-\sum_{B\subseteq M\backslash\{m\}}(-1)^{|B|}\tau_{B\cup\{m\}}f,
\]
which gives the first identity. For the second, write $D_M=D_{M\setminus A}D_A$ and expand only the first factor.
\end{proof}

\begin{lemma}\label{lem:swap-product-rules}
For any functions $f,g$,
\begin{align}
 D_M(fg)
 &=\sum_{A\subseteq M}(D_Af)
       (\tau_A D_{M\setminus A}g),
 \label{app:product-rule-first}\\
 &=\sum_{A\subseteq M}(\tau_{M\setminus A}D_Af)
       (D_{M\setminus A}g).
 \label{app:product-rule-second}
\end{align}
\end{lemma}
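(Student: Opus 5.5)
We prove both product rules by reducing to the case of a single swap and then inducting on $|M|$, in the same way as the subset expansions of Lemma~\ref{lem:swap-subsets}. The one fact we rely on is that every $\tau_m$ acts multiplicatively, $\tau_m(fg)=(\tau_mf)(\tau_mg)$. Since the swaps in $M$ are disjoint, all the operators $\tau_a$ and $D_b$ with $a,b\in M$ commute with one another.

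For $M=\{m\}$ we add and subtract $f\,\tau_mg$. This gives
\[
D_m(fg)=fg-(\tau_mf)(\tau_mg)=f\,(D_mg)+(D_mf)(\tau_mg),
\]
which is \eqref{app:product-rule-first}. The term $A=\emptyset$ is $f\,D_mg$ and the term $A=\{m\}$ is $(D_mf)(\tau_m g)$. If instead we add and subtract $(\tau_mf)\,g$, we obtain $(\tau_mf)(D_mg)+(D_mf)\,g$, which is \eqref{app:product-rule-second}.

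For the inductive step take $|M|\ge2$, pick $m\in M$ and set $M'=M\setminus\{m\}$. We write $D_M(fg)=D_m\bigl(D_{M'}(fg)\bigr)$ and apply the induction hypothesis for $M'$. This expresses $D_M(fg)$ as $D_m$ applied to the sum $\sum_{A'\subseteq M'}(D_{A'}f)(\tau_{A'}D_{M'\setminus A'}g)$. Each summand is a product, so we apply the one-swap rule to it. That produces two terms:
\[
(D_{A'}f)\,D_m(\tau_{A'}D_{M'\setminus A'}g)+(D_mD_{A'}f)\,\tau_m(\tau_{A'}D_{M'\setminus A'}g).
\]
Commutativity lets us rewrite the first term as $(D_{A'}f)(\tau_{A'}D_{M\setminus A'}g)$, which is the summand indexed by $A=A'$. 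The second term becomes $(D_{A'\cup\{m\}}f)(\tau_{A'\cup\{m\}}D_{M\setminus(A'\cup\{m\})}g)$, the summand indexed by $A=A'\cup\{m\}$. Every subset $A\subseteq M$ is either some $A'\subseteq M'$ or some $A'\cup\{m\}$, so the two families together give exactly the sum over $A\subseteq M$. This proves \eqref{app:product-rule-first}.

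For \eqref{app:product-rule-second} there are two routes. We can repeat the same induction using the second one-swap identity. Alternatively, we can apply \eqref{app:product-rule-first} with the roles of $f$ and $g$ exchanged and substitute $A\mapsto M\setminus A$, which gives $\sum_{A}(\tau_{M\setminus A}D_Af)(D_{M\setminus A}g)$ directly.

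There is no real obstacle here. The only point needing care is the bookkeeping in the inductive step, namely checking that the $\tau$ factors land on the correct side. That check rests on the commutation relations $\tau_m\tau_{A'}=\tau_{A'\cup\{m\}}$ and $D_m\tau_{A'}=\tau_{A'}D_m$, both of which hold because $m$ is disjoint from every pair in $A'$.
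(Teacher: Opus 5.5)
Your proposal is correct and follows essentially the same route as the paper: the one-swap identity $D_m(fg)=f\,D_mg+(D_mf)\,\tau_mg$, induction via $D_M=D_mD_{M\setminus\{m\}}$ applied summand by summand, and the second form obtained by exchanging $f$ and $g$ and reindexing $A\mapsto M\setminus A$. Your bookkeeping in the inductive step, using $D_m\tau_{A'}=\tau_{A'}D_m$ and $\tau_m\tau_{A'}=\tau_{A'\cup\{m\}}$, is accurate; the paper leaves exactly this step implicit.
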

\begin{proof}
For $M=\{m\}$ we get the first form
\begin{equation}\label{app:one-swap-product-rule}
 D_m(fg)
 =fD_mg+(D_mf)\tau_mg.
\end{equation}
With the aim of inducting on $|M|$, we rewrite $D_M(fg)=D_mD_{M'}(fg)$, where $M'=M\setminus\{m\}$. Applying Eq.~\eqref{app:one-swap-product-rule} to each term in the expanded form of $D_{M'}(fg)$, we obtain 
\begin{align*}
\sum_{A\subseteq M'}\bigg[(D_Af)(\tau_AD_{M\setminus A}g)+(D_{A\cup\{m\}}f)(\tau_{A\cup\{m\}}D_{M'\setminus A}g)\bigg]
\end{align*}
proving Eq.~\eqref{app:product-rule-first}. Eq.~\eqref{app:product-rule-second} is obtained by swapping $f$ and $g$ in Eq.~\eqref{app:product-rule-first} and summing over sets $M\setminus A$.
\end{proof}
\section{Estimates for \texorpdfstring{$\mathcal Q$}{Q}}\label{app:Q-estimates}
\begin{proof}[Proof of Lemma~\ref{lem:Q-estimates}]
Using Eq.~\eqref{laderive}, and choosing $C$ sufficiently large for the estimates below, we obtain Eq.~\eqref{Q-basic}.
If $i=2p$, we can re-arrange the sum such that
\begin{equation*}
	\mathcal{Q}_{i,j,p,q}(\x_1,\x_2,u)=\sum_{k=1}^{p}\sum^{i+j}_{l=i+1}\la_{l-i}^{(q)}(Q_u(x_{p+\sigma_1(k)},x_l)-Q_u(x_k,x_l))
\end{equation*}
where $\sigma_1\in S_p$ is an optimizing permutation for $m(\x_1)$. Then, using Eq.~\eqref{laderive} and the mean value theorem,
\begin{equation}\label{Q-one-difference}
 |Q_u(x_{p+\sigma_1(k)},x_l)-Q_u(x_k,x_l)|\le C m(\x_1)e^u\sup_{c\in[0,1]}\frac{1}{(1+e^u|c x_k+(1-c)x_{p+\sigma_1(k)}-x_l|)^{3}}.
\end{equation}
When $m(\x_1)< e^{-u}$, the fraction in Eq.~\eqref{Q-one-difference} is of the same order as $(1+e^u|x_k-x_l|)^{-3}$. Combining Eqs.~\eqref{Q-basic},\eqref{Q-one-difference} proves Eq.~\eqref{Q-one-neutral}.
A similar bound holds when $j=2q$. When both $i=2p$ and $j=2q$, let $\sigma_2\in S_q$ be an optimizing permutation for $m(\x_2)$. We write
\begin{multline*}
\mathcal{Q}_{i,j,p,q}(\x_1,\x_2,u)=\sum_{k=1}^{p}\sum^{i+q}_{l=i+1}
\bigl(Q_u(x_{p+\sigma_1(k)},x_{i+q+\sigma_2(l-i)})-Q_u(x_k,x_{i+q+\sigma_2(l-i)})\\
-Q_u(x_{p+\sigma_1(k)},x_l)+Q_u(x_k,x_l)\bigr).
\end{multline*}
Then Eq.~\eqref{laderive} and the mean value theorem yield
\begin{multline}\label{Q-two-differences}
 |Q_u(x_{p+\sigma_1(k)},x_{i+q+\sigma_2(l-i)})-Q_u(x_k,x_{i+q+\sigma_2(l-i)})-Q_u(x_{p+\sigma_1(k)},x_l)+Q_u(x_k,x_l)|\\
 \le C m(\x_1)e^u m(\x_2)e^u\sup_{c_1,c_2\in[0,1]}\frac{1}{(1+e^u|(c_1 x_k+(1-c_1)x_{p+\sigma_1(k)})-(c_2 x_l+(1-c_2)x_{i+q+\sigma_2(l-i)})|)^{3}}.
\end{multline}
When $m(\x_1)<e^{-u}$ and $m(\x_2)<e^{-u}$, the fraction in Eq.~\eqref{Q-two-differences} is of order $(1+e^u|x_k-x_l|)^{-3}$. Combining this with Eqs.~\eqref{Q-basic} and~\eqref{Q-one-neutral} (and the $j=2q$ version of~\eqref{Q-one-neutral}) proves Eq.~\eqref{Q-two-neutral}.
\end{proof}


\begin{thebibliography}{99}

\bibitem{BauerschmidtHofstetter2022}
R.~Bauerschmidt and M.~Hofstetter,
\emph{Maximum and coupling of the sine-Gordon field},
The Annals of Probability \textbf{50} (2022), 455--508.
\href{https://doi.org/10.1214/21-AOP1537}{doi:10.1214/21-AOP1537}.

\bibitem{BauerschmidtWebb2024}
R.~Bauerschmidt and C.~Webb,
\emph{The Coleman correspondence at the free fermion point},
Journal of the European Mathematical Society \textbf{26} (2024), 3137--3241.
\href{https://doi.org/10.4171/JEMS/1329}{doi:10.4171/JEMS/1329}.

\bibitem{BenfattoGallavottiNicolo1982}
G.~Benfatto, G.~Gallavotti, and F.~Nicol\`o,
\emph{On the massive sine-Gordon equation in the first few regions of collapse},
Communications in Mathematical Physics \textbf{83} (1982), 387--410.
\href{https://doi.org/10.1007/BF01213609}{doi:10.1007/BF01213609}.

\bibitem{BringmannCao2026}
B.~Bringmann and S.~Cao,
\emph{Global well-posedness of the dynamical sine-Gordon model up to $6\pi$},
The Annals of Probability \textbf{54} (2026), 1564--1607.
\href{https://doi.org/10.1214/25-AOP1797}{doi:10.1214/25-AOP1797}.

\bibitem{BrydgesKennedy1987}
D.~C.~Brydges and T.~Kennedy,
\emph{Mayer expansions and the Hamilton--Jacobi equation},
Journal of Statistical Physics \textbf{48} (1987), 19--49.
\href{https://doi.org/10.1007/BF01010398}{doi:10.1007/BF01010398}.

\bibitem{ChandraFeltesWeber2026}
A.~Chandra, G.~de~Lima~Feltes, and H.~Weber,
\emph{A priori bounds for the generalised parabolic Anderson model},
Communications on Pure and Applied Mathematics \textbf{79} (2026), 1315--1394.
\href{https://doi.org/10.1002/cpa.70025}{doi:10.1002/cpa.70025}.

\bibitem{ChandraHairerShen2018}
A.~Chandra, M.~Hairer, and H.~Shen,
\emph{The dynamical sine-Gordon model in the full subcritical regime},
\href{https://arxiv.org/abs/1808.02594}{arXiv:1808.02594} (2018).

\bibitem{DimockHurd1993}
J.~Dimock and T.~R.~Hurd,
\emph{Construction of the two-dimensional sine-Gordon model for $\beta<8\pi$},
Communications in Mathematical Physics \textbf{156} (1993), 547--580.
\href{https://doi.org/10.1007/BF02096863}{doi:10.1007/BF02096863}.

\bibitem{DimockHurd2000}
J.~Dimock and T.~R.~Hurd,
\emph{Sine-Gordon revisited},
Annales Henri Poincar\'e \textbf{1} (2000), 499--541.
\href{https://doi.org/10.1007/s000230050005}{doi:10.1007/s000230050005}.

\bibitem{Frohlich1975}
J.~Fr\"ohlich,
\emph{Quantized ``sine-Gordon'' equation with a nonvanishing mass term
in two space-time dimensions},
Physical Review Letters \textbf{34} (1975), 833--836.
\href{https://doi.org/10.1103/PhysRevLett.34.833}{doi:10.1103/PhysRevLett.34.833}.

\bibitem{GubinelliHairerOhZine2025}
M.~Gubinelli, M.~Hairer, T.~Oh, and Y.~Zine,
\emph{A simple construction of the sine-Gordon model via stochastic quantization},
Journal of the London Mathematical Society \textbf{112} (2025), e70214.
\href{https://doi.org/10.1112/jlms.70214}{doi:10.1112/jlms.70214}.

\bibitem{GubinelliMeyer2024}
M.~Gubinelli and S.-J.~Meyer,
\emph{The FBSDE approach to sine-Gordon up to $6\pi$},
\href{https://arxiv.org/abs/2401.13648v3}{arXiv:2401.13648v3}
(2024, revised 2026).

\bibitem{HairerShen2016}
M.~Hairer and H.~Shen,
\emph{The dynamical sine-Gordon model},
Communications in Mathematical Physics \textbf{341} (2016), 933--989.
\href{https://doi.org/10.1007/s00220-015-2525-3}{doi:10.1007/s00220-015-2525-3}.

\bibitem{JunnilaSaksmanWebb2020}
J.~Junnila, E.~Saksman, and C.~Webb,
\emph{Imaginary multiplicative chaos: moments, regularity and
connections to the Ising model},
The Annals of Applied Probability \textbf{30} (2020), 2099--2164.
\href{https://doi.org/10.1214/19-AAP1553}{doi:10.1214/19-AAP1553}.

\bibitem{KosterlitzThouless1973}
J.~M.~Kosterlitz and D.~J.~Thouless,
\emph{Ordering, metastability and phase transitions in two-dimensional systems},
Journal of Physics C: Solid State Physics \textbf{6} (1973), 1181--1203.
\href{https://doi.org/10.1088/0022-3719/6/7/010}{doi:10.1088/0022-3719/6/7/010}.

\bibitem{Kunita2019}
H.~Kunita,
\emph{Stochastic flows and jump-diffusions},
Probability Theory and Stochastic Modelling \textbf{92},
Springer, Singapore, 2019.
Section~1.8, Lemmas~1.8.1--1.8.2, pp.~42--44.
\href{https://doi.org/10.1007/978-981-13-3801-4}{doi:10.1007/978-981-13-3801-4}.

\bibitem{LacoinRhodesVargas2023}
H.~Lacoin, R.~Rhodes, and V.~Vargas,
\emph{A probabilistic approach of ultraviolet renormalization in the boundary
sine-Gordon model},
Probability Theory and Related Fields \textbf{185} (2023), 1--40.
\href{https://doi.org/10.1007/s00440-022-01174-5}{doi:10.1007/s00440-022-01174-5}.
Preprint: \href{https://arxiv.org/abs/1903.01394v2}{arXiv:1903.01394v2} (2020).
Numbered references to this work follow this arXiv version.

\bibitem{Meyer2026}
S.-J.~Meyer,
\emph{An FBSDE construction of the sine-Gordon EQFT for
$\beta^2<\frac67\,8\pi$ and perturbative renormalization in the full subcritical regime},
\href{https://arxiv.org/abs/2607.20632}{arXiv:2607.20632} (2026).

\bibitem{Nicolo1983}
F.~Nicol\`o,
\emph{On the massive sine-Gordon equation in the higher regions of collapse},
Communications in Mathematical Physics \textbf{88} (1983), 581--600.
\href{https://doi.org/10.1007/BF01211960}{doi:10.1007/BF01211960}.

\bibitem{NicoloPerfetti1989}
F.~Nicol\`o and P.~Perfetti,
\emph{The sine-Gordon field theory model at $\alpha^2=8\pi$,
the non-superrenormalizable theory},
Communications in Mathematical Physics \textbf{123} (1989), 425--452.
\href{https://doi.org/10.1007/BF01238809}{doi:10.1007/BF01238809}.

\bibitem{NicoloRennSteinmann1986}
F.~Nicol\`o, J.~Renn, and A.~Steinmann,
\emph{On the massive sine-Gordon equation in all regions of collapse},
Communications in Mathematical Physics \textbf{105} (1986), 291--326.
\href{https://doi.org/10.1007/BF01211104}{doi:10.1007/BF01211104}.

\bibitem{Pelaic2025}
J.~Pelai\v{c},
\emph{The subcritical finite-volume massive sine-Gordon model},
\href{https://arxiv.org/abs/2508.13778v2}{arXiv:2508.13778v2} (2025).

\bibitem{TangXu2025}
S.~Tang and R.~Xu,
\emph{A quadratic BSDE approach to normalization for the finite volume
2D sine-Gordon model in the finite ultraviolet regime},
\href{https://arxiv.org/abs/2501.12172v2}{arXiv:2501.12172v2} (2025).

\bibitem{Vihko2026}
S.~Vihko,
\emph{The massless sine-Gordon model with logarithmic correlations
in arbitrary dimension},
\href{https://arxiv.org/abs/2606.06657v2}{arXiv:2606.06657v2} (2026).

\end{thebibliography}
\end{document}